\theoremstyle{plain}
\newtheorem{theorem}{Theorem}[section]
\newtheorem{corollary}[theorem]{Corollary}
\newtheorem{lemma}[theorem]{Lemma}
\theoremstyle{definition}
\theoremstyle{remark}
\newtheorem{remark}{Remark}
\numberwithin{equation}{section}
\numberwithin{figure}{section}
\numberwithin{table}{section}
\newcommand{\wt}{\operatorname{wt}}
\newcommand{\M}{\operatorname{M}}
\newcommand{\T}{\operatorname{T}}
\newcommand{\de}{\operatorname{de}}
\begin{document}
\setlength{\baselineskip}{13truept}
\title{New aspects of regions whose tilings are enumerated by perfect powers}
\author{TRI LAI\\
Indiana University\\
Department of Mathematics\\
 Bloomington, IN 47405, USA}
\date{\today}

\maketitle
\begin{abstract}
In 2003, Ciucu presented a unified way to enumerate tilings of lattice regions by using a certain Reduction Theorem (Ciucu, Perfect Matchings and Perfect Powers, \emph{Journal of Algebraic Combinatorics}, 2003). In this paper we continue this line of work by investigating new families of lattice regions whose tilings are enumerated by perfect powers or products of several perfect powers. We prove a multi-parameter generalization of Bo-Yin Yang's theorem on fortresses (B.-Y. Yang, Ph.D. thesis, Department of Mathematics, MIT, MA, 1991).  On the square lattice with zigzag paths, we consider two particular families of regions whose numbers of tilings are always a power of 3 or twice a power of 3. The latter result provides a new proof for a conjecture of Matt Blum first proved by Ciucu. We also consider several new lattices obtained by periodically applying two simple subgraph replacement rules to the square lattice. On some of those lattices, we get new families of regions whose numbers of tilings  are given by products of several perfect powers. In addition, we prove a simple product formula for the number of tilings of a certain family of regions on a variant of the triangular lattice.

  \bigskip\noindent \textbf{Keywords:} perfect matchings \and tilings \and exact enumeration \and Aztec diamond \and fortress
\end{abstract}

\section{Introduction}
\label{intro}

Given a lattice in the plane, a (lattice) \textit{region} is a finite connected union of elementary regions of that lattice. A \textit{tile} is the union of any two elementary regions sharing an edge. A \textit{tiling} of the region $R$ is a covering of $R$ by tiles so that there are no gaps or overlaps. We denote by $\T(R)$ the number of tilings of region $R$. The \textit{dual graph} of a region $R$ is the graph whose vertices are the elementary regions of $R$, and whose edges connect two elementary regions precisely when they share an edge.

We consider only undirected finite graphs without loops, however, multiple edges are allowed. A \textit{perfect matching} of a graph $G$ is a collection of edges such that each vertex of $G$ is incident to precisely one edge in the collection. If the edges of $G$ have weights on them, $\M(G)$ denotes the sum of the weights of all perfect matchings of $G$, where the weight of a perfect matching is the product of the weights on its constituent edges. We call $\M(G)$ the \textit{matching generating function} of $G$. One easily sees that when $G$ has all edges weighted by 1, $\M(G)$ is exactly the number of perfect matchings of $G$.

By a well-known bijection between the tilings of a region $R$ and the perfect matchings of its dual graph $G$, we have $\T(R)=\M(G)$.
\begin{figure}\centering
\includegraphics[width=0.6\textwidth]{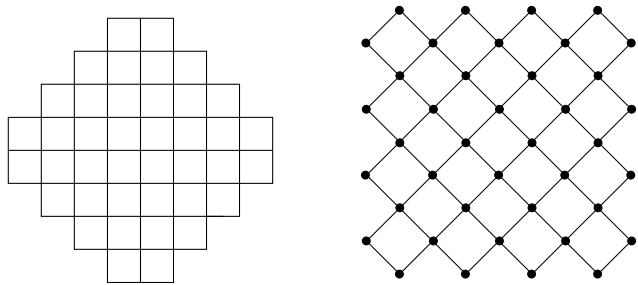}
\caption{The Aztec diamond region of order 4 (left) and  its dual graph (after rotated $45^o$), the Aztec diamond graph of order 4.}
\label{Fig1}
\end{figure}
%
In the early 1990's, Elkies, Kuperberg, Larsen and Propp \cite{Elkies} considered a family of simple regions on the square lattice called Aztec diamonds (see Figure \ref{Fig1} for an example), and proved that the number of domino tilings of the Aztec diamond of order $n$ is $2^{n(n+1)/2}$.

A large body of related work followed (see for example \cite{Ciucu5},  \cite{Doug}, \cite{Propp}, \cite{Yang}),  centered on families of lattice regions whose tilings are enumerated by perfect powers or near perfect powers. In 2003, Ciucu \cite{Ciucu5} presented an approach that allows finding the number of tilings of such families of regions in a unified way. In particular, to find the number of tilings of a region, we find the number of perfect matchings of its dual graph. Then we deform the dual graph into a weighted Aztec diamond graph by using some simple subgraph replacement rules, and find matching generating function of the resulting weighted graph. We encode the weights of edges in the weighted Aztec diamond graph by a certain matrix, and then apply  repeatedly a naturally arising operator $\de$ to this matrix. Finally, after several applications of $\de$, we get a new matrix of a same type as the original one. This together with Reduction Theorem (called Generalized Domino-Shuffling in \cite{propp}) yield simple recurrences that determine the matching generating function.

In Section 3, we use this approach to prove a new multi-parameter generalization of Bo-Yin Yang's theorem \cite{Yang} on fortress regions. We show that the number tilings of a generalized fortress is always a product of a power of 2 and a power of 5  (see Theorem \ref{maineven}). We also prove a new counterpart of Stanley's multi-parameter generalization (see \cite{Ciucu2}) of Aztec diamond theorem \cite{Elkies}.

Section 4 investigates two new families of regions on the square lattice with every second zigzag path drawn in. We prove that the numbers of tilings in this case are either a power of 3 or twice a power of 3. Ciucu \cite{Ciucu5} also proved  a conjecture of Blum that the number of perfect matchings of a certain family of subgraphs of the square lattice is a power of $3$ or twice a power of $3$. The two formulas are nearly identical. It turns out that one can establish a direct connection between them. This provides an unexpected new proof for the conjecture.

We give a unified way to create new lattices from the square lattice in Section 5. We consider two special subgraph replacement rules for the nodes of the square lattice.  Periodically applying these rules gives us a large number of new lattices. On those lattices, we investigate several families of regions that are similar to Aztec diamonds or fortresses. In some cases, their numbers of tilings are given by products of several perfect powers.

Finally in Section 6, we create a simple variant of the standard triangular lattice by periodically removing some lattice segments. We consider a ``rhombus-shaped" region on the resulting lattice that has the number of tilings given by a product of a power of 2 and a power of 3.


\section{Preliminary results and Reduction Theorem}
\label{sec2}

Before going to the statement of Reduction Theorem, we employ several basic preliminary results stated below.

A \textit{forced edge} of a graph is an edge contained in every perfect matching of $G$. Assume that $G$ is a weighted graph with weight function $\wt$ on its edges, and $G'$ is obtained from $G$ by removing forced edges $e_1,\dotsc,e_n$, and removing the vertices incident to these forced edges. Then one clearly has
\begin{equation}
\M(G)=\M(G')\prod_{i=1}^n\wt(e_i).
\end{equation}
\textit{From now on, whenever we remove some forced edges, we remove also the vertices incident to them.}

\begin{figure}\centering
\includegraphics[width=8cm]{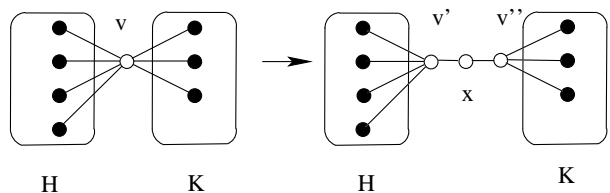}
\caption{Vertex splitting.}
\label{Fig2}
\end{figure}

\begin{lemma} [Vertex-splitting Lemma]\label{VS}
 Let $G$ be a graph, $v$ be a vertex of it, and denote the set of neighbors of $v$ by $N(v)$.
  For an arbitrary partition $N(v)=H\cup K$, let $G'$ be the graph obtained from $G\setminus v$ by including three new vertices $v'$, $v"$ and $x$ so that $N(v')=H\cup \{x\}$, $N(v")=K\cup\{x\}$, and $N(x)=\{v',v"\}$ (see Figure \ref{Fig2}). Then $\M(G)=\M(G')$.
\end{lemma}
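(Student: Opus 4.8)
The plan is to establish a weight-preserving bijection between the perfect matchings of $G$ and those of $G'$, which immediately gives $\M(G) = \M(G')$ term by term. First I would observe that in any perfect matching $M'$ of $G'$, the vertex $x$ must be matched either to $v'$ or to $v"$, since $N(x) = \{v', v"\}$. This dichotomy is the crux of the construction.

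Next I would define the map $\M(G') \to \M(G)$ as follows. Given a perfect matching $M'$ of $G'$, exactly one of the edges $v'x$, $v"x$ lies in $M'$; say it is $v'x$ (the other case being symmetric). Then $v"$ is matched by $M'$ to some vertex $k \in K$ via an edge $v"k$, and the remaining edges of $M'$ at $v'$ — there is exactly one, say $v'h$ with $h \in H$ — together with $v"k$ get replaced by the single edge $vh$ of $G$; all other edges of $M'$ are kept unchanged. I would check that the result is a perfect matching of $G$: every vertex of $G \setminus v$ retains its partner (those matched to $v', v", x$ in $M'$ are exactly $h, k$, which now pair with $v$), and $v$ itself is covered by $vh$. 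The weight is preserved provided we set $\wt(vh) = \wt(v'h)$ and, in the symmetric case where $v"x \in M'$, $\wt(vk) = \wt(v"k)$; that is, assign each edge $vu$ of $G$ the weight of the corresponding edge at $v'$ or $v"$ in $G'$, and give $x v', x v"$ weight $1$ (or absorb them into one side). For the unweighted statement this bookkeeping is vacuous.

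For the inverse map $\M(G) \to \M(G')$, given a perfect matching $M$ of $G$, the vertex $v$ is matched to a unique neighbor $u \in N(v) = H \cup K$. If $u \in H$, replace the edge $vu$ by the two edges $v'u$ and $v"x$; if $u \in K$, replace $vu$ by $v"u$ and $v'x$. One checks this is a perfect matching of $G'$ and that the two maps are mutually inverse, completing the bijection.

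I expect the main (and really only) obstacle to be purely notational: one must carefully handle the partition cases $u \in H$ versus $u \in K$ and verify that no vertex is left uncovered or doubly covered after the local surgery — in particular confirming that $x$ always ends up matched and that $v', v"$ each receive exactly one partner. There is no combinatorial depth beyond this case analysis; the weight-preservation clause, if one tracks weights at all, is handled by the obvious transport of edge weights described above. Since the excerpt's statement is for the unweighted count $\M(G) = \M(G')$ but the surrounding text works with weighted graphs, I would phrase the bijection so that it transports weights as well, making the lemma usable in the weighted setting.
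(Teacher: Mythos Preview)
The paper states this lemma as a standard preliminary without proof, so there is no authorial argument to compare against. Your bijective approach is the natural one and is essentially correct, but your description of the forward map $\M(G')\to\M(G)$ contains a genuine slip that you should fix.

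You suppose $v'x\in M'$ and then write that ``the remaining edges of $M'$ at $v'$ --- there is exactly one, say $v'h$ with $h\in H$''. There is no such edge: since $v'x\in M'$, the vertex $v'$ is already saturated and carries no other matched edge. In this case only $v''$ needs a partner outside $\{v',v'',x\}$, namely some $k\in K$, and the local replacement is simply $\{v'x,\,v''k\}\mapsto\{vk\}$. Correspondingly, your claim that ``$h,k$ \ldots\ now pair with $v$'' cannot hold in a perfect matching of $G$; only one neighbor of $v$ is covered by the surgery. Your inverse map is stated correctly, and from it the forward map is forced, so the argument is easily repaired --- but as written the forward direction is internally inconsistent.
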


\begin{lemma}[Edge-replacing Lemma]\label{ER}
Let $G$ be a weighted graph with weight function $\wt$ on its edges, $u$ and $v$ be two distinct vertices of $G$. Assume that $e_1$ and $e_2$ are two edges connecting $u$ and $v$. Let $G'$ be the graph obtained from $G$ by replacing two edges $e_1$ and $e_2$ by a new edge of weight $\wt(e_1)+\wt(e_2)$ that connects $u$ and $v$. Then $M(G)=M(G')$.
\end{lemma}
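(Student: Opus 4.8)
The plan is to partition the perfect matchings of $G$ according to how they meet the parallel edges $e_1$ and $e_2$, and to compare this partition with the analogous one for $G'$. The crucial observation is that, since $e_1$ and $e_2$ join the same pair of vertices $u$ and $v$, no perfect matching of $G$ can contain both of them: a matching containing $e_1$ already covers $u$ and $v$, hence cannot also contain $e_2$. Thus the perfect matchings of $G$ fall into exactly three disjoint classes: those containing $e_1$, those containing $e_2$, and those containing neither.

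Next I would describe each class in terms of the weighted graph $H := G\setminus\{u,v\}$ (removing $u$, $v$, and incident edges). A perfect matching of $G$ containing $e_i$ is exactly a set $M_0\cup\{e_i\}$ with $M_0$ a perfect matching of $H$, of weight $\wt(e_i)\,\wt(M_0)$; summing over $M_0$, the first class contributes $\wt(e_1)\,\M(H)$ and the second $\wt(e_2)\,\M(H)$. The matchings using neither $e_1$ nor $e_2$ are precisely the perfect matchings of the graph $G\setminus\{e_1,e_2\}$ obtained by deleting both parallel edges but keeping $u$ and $v$; denote by $\M(G\setminus\{e_1,e_2\})$ their total weight. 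Therefore
\begin{equation}
\M(G) = \bigl(\wt(e_1)+\wt(e_2)\bigr)\,\M(H) + \M(G\setminus\{e_1,e_2\}).
\end{equation}
Running the same argument on $G'$, with $e$ the new edge of weight $\wt(e_1)+\wt(e_2)$: the matchings of $G'$ using $e$ are the sets $M_0\cup\{e\}$ with $M_0$ a perfect matching of $G'\setminus\{u,v\}$, and the matchings avoiding $e$ are the perfect matchings of $G'\setminus e$. Since the only edges between $u$ and $v$ are $e_1,e_2$ in $G$ and $e$ in $G'$, deleting $u$ and $v$ from either graph yields the same weighted graph $H$, and $G'\setminus e = G\setminus\{e_1,e_2\}$. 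Hence the two contributions sum to exactly the right-hand side of the displayed identity, so $\M(G)=\M(G')$.

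There is no substantive obstacle here; the lemma is a short bookkeeping argument once the three-way split is recorded. The only points requiring a little care are (i) noting that no perfect matching uses $e_1$ and $e_2$ simultaneously, which is what reduces the split to three classes rather than four, and (ii) checking that $G\setminus\{u,v\}$ and $G'\setminus\{u,v\}$ are literally the same weighted graph, so the ``middle'' and ``neither'' terms agree on both sides. Equivalently, one may phrase the whole proof as a weight-collapsing correspondence between the perfect matchings of $G$ and those of $G'$: fix every matching that avoids $e_1$ and $e_2$, and send both $M_0\cup\{e_1\}$ and $M_0\cup\{e_2\}$ to $M_0\cup\{e\}$, observing that two matchings of $G$ of weights $\wt(e_1)\wt(M_0)$ and $\wt(e_2)\wt(M_0)$ are thereby merged into one matching of $G'$ of weight $\bigl(\wt(e_1)+\wt(e_2)\bigr)\wt(M_0)$.
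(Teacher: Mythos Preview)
Your proof is correct. The paper itself does not supply a proof of this lemma; it is listed among the preliminary results (alongside the Vertex-splitting, Star, and Spider Lemmas) and treated as standard. Your three-way partition according to whether a matching contains $e_1$, $e_2$, or neither is exactly the natural argument, and the alternative phrasing as a weight-collapsing two-to-one map is equally valid.
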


\begin{lemma}[Star Lemma]\label{star}
Let $G$ be a weighted graph, and let $v$ be a vertex of~$G$. Let $G'$ be the graph obtained from $G$ by multiplying the weights of all edges that are incident to $v$ by $t>0$. Then $\M(G')=t\,\M(G)$.
\end{lemma}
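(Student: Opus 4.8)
The plan is to argue directly from the definition of the matching generating function, exploiting the fact that a perfect matching must cover the distinguished vertex $v$ using exactly one edge. First I would note that $G$ and $G'$ have the same underlying vertex set and edge set; only the weight function changes, from $\wt$ on $G$ to a new function $\wt'$ on $G'$ given by $\wt'(e)=t\,\wt(e)$ if $e$ is incident to $v$, and $\wt'(e)=\wt(e)$ otherwise. In particular $G$ and $G'$ have \emph{exactly the same} perfect matchings when these are regarded as collections of edges; what may differ is the weight each perfect matching receives.

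Next I would fix an arbitrary perfect matching $M$ (common to $G$ and $G'$). Since $v$ is a vertex of the graph and the graph has no loops, $M$ contains precisely one edge incident to $v$, namely the unique edge of $M$ at which $v$ is matched. Hence, among the edges of $M$, exactly one has its weight scaled by $t$ in passing from $G$ to $G'$, while all other edges of $M$ keep their weights. As the weight of $M$ is the product of the weights of its constituent edges, writing $\wt(M)=\prod_{e\in M}\wt(e)$ and similarly $\wt'(M)=\prod_{e\in M}\wt'(e)$, this yields $\wt'(M)=t\,\wt(M)$. Summing over all perfect matchings then gives
\[
\M(G')=\sum_{M}\wt'(M)=\sum_{M}t\,\wt(M)=t\sum_{M}\wt(M)=t\,\M(G),
\]
where both sums range over the common set of perfect matchings; this is the claimed identity.

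There is essentially no serious obstacle. The only point needing a moment's care is the assertion that every perfect matching uses exactly one edge incident to $v$: if there are several parallel edges joining $v$ to one of its neighbours, still at most one of them can lie in a given perfect matching, since $v$ is covered exactly once; and if $v$ happens to be isolated, then neither $G$ nor $G'$ has a perfect matching and the identity reduces to $0=t\cdot 0$. Beyond this case check, the proof is a direct manipulation of the defining sum and needs no further input.
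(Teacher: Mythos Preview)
Your argument is correct and is exactly the natural proof of this fact: since $G$ and $G'$ share the same set of perfect matchings and each matching contains precisely one edge incident to $v$, the weight of every matching is scaled by $t$, and summing gives $\M(G')=t\,\M(G)$. The paper itself does not supply a proof of this lemma (it is simply quoted as a preliminary result), so there is nothing to compare against; your treatment, including the remarks on parallel edges and the degenerate case of an isolated $v$, is complete.
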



Part (a) of the following result is a generalization due to Propp of the ``urban renewal" trick first observed by Kuperberg. Parts (b) and (c) are due to Ciucu (see Lemma 2.6 in [5]).
\begin{figure}\centering
\includegraphics[width=0.5\textwidth]{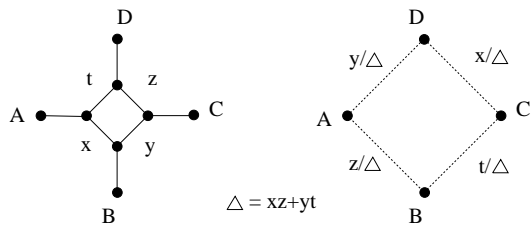}
\caption{Urban renewal.}
\label{Fig3}
\end{figure}
\begin{figure}\centering
\includegraphics[width=0.85\textwidth]{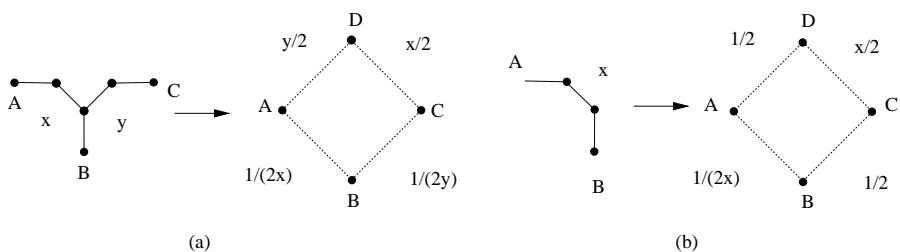}
\caption{Two variants of urban renewal.}
\label{Fig4}
\end{figure}

\begin{lemma} [Spider Lemma]\label{spider}
(a) Let $G$ be a weighted graph containing the subgraph $K$ shown on the left in Figure \ref{Fig3} (the labels indicate weights, unlabeled edges have weight 1). Suppose in addition that the four inner black vertices in the subgraph $K$, different from $A,B,C,D$, have no neighbors outside $K$. Let $G'$ be the graph obtained from $G$ by replacing $K$ by the graph $\overline{K}$ shown on right in Figure
\ref{Fig3}, where the dashed lines indicate new edges, weighted as shown. Then $\M(G)=(xz+yt) \M(G')$.

(b) Consider the above local replacement operation when $K$ and $\overline{K}$ are graphs shown in Figure \ref{Fig4}(a) with the indicated weights (in particular, $K'$ has a new vertex $D$ that is incident only to $A$ and $C$). Then $\M(G)=2\M(G')$.

(c) The statement of part (b) is also true when $K$ and $\overline{K}$ are the graphs indicated in Figure \ref{Fig4}(b) (in this case $G'$ has two new vertices $C$ and $D$ that are adjacent only to one another and to $B$ and $A$, respectively).
\end{lemma}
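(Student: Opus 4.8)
The plan is to prove part (a) by a ``boundary condition'' decomposition and to obtain parts (b) and (c) as degenerate specializations of it. Throughout, the only structural input used is the hypothesis that the four inner black vertices of $K$ have no neighbors outside $K$. Fix a perfect matching $M$ of $G$ and record which of the outer vertices $A,B,C,D$ are matched in $M$ along an edge of $K$; call this set $S\subseteq\{A,B,C,D\}$. Because the inner vertices lie entirely inside $K$, the restriction $M\cap E(K)$ is a matching covering all four inner vertices together with exactly the vertices of $S$, while $M\setminus E(K)$ covers the rest of $G$ — in particular the vertices of $\{A,B,C,D\}\setminus S$ — and conversely any such pair of matchings glues to a perfect matching of $G$. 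A parity count on the vertex set of $K$ cuts the admissible $S$ down to a short list (in the situation of Figure~\ref{Fig3} one checks that $|S|$ must be even, and that when $|S|=2$ the two vertices of $S$ must be ``adjacent'' in the appropriate sense, since otherwise the leftover inner vertices cannot be matched to one another). For each admissible $S$ there are only one or two internal matchings of $K$, so summing their weights gives a local constant $w_K(S)$, and grouping the matchings of $G$ according to $S$ yields
\[
\M(G)=\sum_{S\text{ admissible}}w_K(S)\,\M^{(S)},
\]
where $\M^{(S)}$ is the matching generating function of the graph obtained from $G$ by deleting the four inner vertices and the vertices of $S$; crucially $\M^{(S)}$ depends only on $S$, not on the weights inside $K$.

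Running exactly the same decomposition on $G'$, whose only difference from $G$ is that $K$ has been replaced by $\overline K$, gives $\M(G')=\sum_{S}w_{\overline K}(S)\,\M^{(S)}$ with the very same quantities $\M^{(S)}$. The weights attached to $\overline K$ in Figure~\ref{Fig3} are rigged precisely so that
\[
w_K(S)=(xz+yt)\,w_{\overline K}(S)\qquad\text{for every admissible }S ;
\]
checking this is the only genuine computation, and it amounts to a handful of scalar identities — the cases $S=\emptyset$, $S=\{A,B,C,D\}$, and the four $|S|=2$ cases — with $xz+yt$ serving as the common denominator of the new weights. Summing over $S$ then gives $\M(G)=(xz+yt)\,\M(G')$, which is part (a). Alternatively one can give a ``local moves only'' proof: split each inner vertex with the Vertex-splitting Lemma, collapse the parallel edges that this creates with the Edge-replacing Lemma, and renormalize weights with the Star Lemma; the accumulated rescaling factor works out to $xz+yt$.

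Parts (b) and (c) are the degenerate instances of this in which the weights involved are all $1$, so that the constant $xz+yt$ becomes $1\cdot1+1\cdot1=2$, and one or two of the new outer vertices have been contracted away, leaving the low-degree new vertices $C$ and $D$ described in the statement. I would obtain them either by invoking part (a) with these weights and then clearing the resulting degree-$2$ vertices using forced-edge removal together with one further pass of the Vertex-splitting and Edge-replacing Lemmas, or, more cheaply, by simply repeating the boundary-condition analysis directly on the small graphs of Figure~\ref{Fig4}, which have only a couple of internal matchings apiece; in both routes the factor $2$ drops out of the two ways of matching the central cycle.

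The obstacle I expect is bookkeeping rather than anything conceptual: one must line up the cyclic labelling of $x,y,z,t$ on $K$, the weights on $\overline K$, and the $|S|=2$ boundary conditions so that $w_K(S)=(xz+yt)\,w_{\overline K}(S)$ holds in every case with no rotational mismatch, and in (b) and (c) one must make sure the degenerate graphs still satisfy the ``inner vertices unseen from outside'' hypothesis that licenses the local replacement in the first place. The structural half of the argument is essentially free: it is exactly that hypothesis which both validates the expansion $\M(G)=\sum_S w_K(S)\,\M^{(S)}$ and allows the substitution of $\overline K$ for $K$ to be performed one block at a time.
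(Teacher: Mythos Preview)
The paper does not prove the Spider Lemma; it quotes it as a known result, attributing part (a) to Propp (generalizing Kuperberg's urban renewal) and parts (b) and (c) to Lemma~2.6 of Ciucu \cite{Ciucu5}. So there is no ``paper's own proof'' to compare against.

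That said, your boundary-condition decomposition is exactly the standard argument for urban renewal and is correct: classifying perfect matchings of $G$ by the set $S\subseteq\{A,B,C,D\}$ matched inside $K$, observing that the complementary piece $\M^{(S)}$ is the same for $G$ and $G'$, and verifying $w_K(S)=(xz+yt)\,w_{\overline K}(S)$ for each of the six admissible $S$ (namely $S=\emptyset$, $S=\{A,B,C,D\}$, and the four adjacent pairs). Your remark that opposite pairs are inadmissible is right, since the two leftover inner vertices of the $4$-cycle are then nonadjacent. For (b) and (c) your second route --- repeating the same local analysis directly on the smaller graphs of Figure~\ref{Fig4} --- is the clean one; the ``specialize (a) and then contract'' route works too but requires some care, since the graphs $K$ in Figure~\ref{Fig4} are not literally the eight-vertex $K$ of Figure~\ref{Fig3} with some weights set to $1$, but rather have one or two of the outer legs missing altogether.
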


The centers of the edges of the Aztec diamond graph of order $n$, denoted by $AD_n$, form an $2n\times 2n$ array. The entries of this array are the weights of these edges. We call this $2n\times 2n$  array \textit{the weight matrix} of the weighted Aztec diamond $AD_n$. We are interested in the case of periodic weight matrix.

Let $A$ be an $k\times l$ matrix with $k$ and $l$ even. Place a copy of $A$ in the upper left corner of the weight matrix and fill
in the rest of the array periodically with period $A$ (i.e. translate $A$ to the right $l$ units at a time and  down $k$ units at a time; if the size of the weight matrix is not a multiple of $k$ or $l$, some of these translates will fit only partially in the array). Define the weight $\wt_A$ on the edges of $AD_n$ by assigning each edge the corresponding entry of $A$ in the array described above. In this case, $A$ is called the \textit{weight pattern} of the weighted Aztec diamond. Denote by $AD_n(\wt_A)$ the Aztec diamond graph of order $n$ with the weight pattern $A$.

The following useful lemma was first proved by Ciucu.

\begin{lemma}\label{mulstar}
(a) (Lemma 4.4 in \cite{Ciucu5}) Consider the Aztec Diamond graph of order $n$ with $2n\times 2n$ weight matrix $D$. We divide $D$ into $n+1$ parts: the first column (resp., row), the last column (resp., row), and $2k$-th and $(2k+1)$-th columns (resp., rows), for $k=1,\dots,n-1$. $D'$ is the matrix obtained from $D$ by multiplying all entries of some part by a positive number $t>0$, then
\begin{equation}
\M(AD_n(\wt_D))=t^{-n}\M(AD_n(\wt_{D'})).
\end{equation}

(b) (Lemma 6.2 in \cite{Ciucu5}) We now divide matrix $D$ above into $n$ parts, so that the $i$-th part consists of the $(2i-1)$-th and $2i$-th columns (resp., rows), for $i=1,2,\dots,n$. $D''$ is the matrix obtained from $D$ by multiplying all entries of some part by a positive number $t$, then
\begin{equation}
\M(AD_n(\wt_D))=t^{-n-1}\M(AD_n(\wt_{D''})).
\end{equation}
\end{lemma}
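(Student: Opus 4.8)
The plan is to derive both identities by repeated use of the Star Lemma (Lemma~\ref{star}). Recall that rescaling, at a single vertex $v$, the weights of all edges incident to $v$ by a factor $t$ multiplies $\M$ by $t$. Hence, if for a given ``part'' of the weight matrix we can produce an \emph{independent} set $S$ of vertices of $AD_n$ (no two of them sharing an edge) whose incident edges are exactly the edges whose midpoints make up that part, then applying the Star Lemma once at each vertex of $S$, always with parameter $t$, carries $\wt_D$ into $\wt_{D'}$ (resp.\ $\wt_{D''}$) while multiplying $\M$ by $t^{|S|}$; it then only remains to check $|S|=n$ in case (a) and $|S|=n+1$ in case (b).

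Such a set $S$ is read off from the incidence structure between the vertices of $AD_n$ and the cells of its $2n\times2n$ weight array. Placing the array on $AD_n$ as in the definition of $\wt_A$, one checks that the two color classes of the bipartite graph $AD_n$ correspond, respectively, to the (possibly truncated) $2\times2$ blocks of cells of the forms ``rows $\{2a,2a+1\}$, columns $\{2b-1,2b\}$'' and ``rows $\{2a-1,2a\}$, columns $\{2b,2b+1\}$'', the correspondence being that the edges incident to a given vertex are precisely the edges whose midpoints lie in its block; near the boundary of the array a block is cut down to a domino or a single cell, which merely lowers the degree of the corresponding vertex. Consequently, if we fix a column-pair and let the row-pair range, the vertices of the pertinent color class sharing that column-pair form an independent set whose incident edges are exactly the cells lying in those columns. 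Since the array has $2n$ rows and the two color classes carry column-pairs of the two different parities, such an independent set has $n$ vertices when the column-pair has the form $\{2b,2b+1\}$ and $n+1$ vertices when it has the form $\{2b-1,2b\}$; the analogous statements hold with the roles of rows and columns interchanged.

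Case (a) now follows at once. The parts listed there in the column case — the first column, the last column, and the column-pairs $\{2k,2k+1\}$ for $1\le k\le n-1$ — are exactly the $n+1$ column-pairs of the form $\{2b,2b+1\}$ (the single columns $1$ and $2n$ being the truncations at the two ends of the array), so the associated independent set $S$ has $n$ vertices; applying the Star Lemma at the vertices of $S$ with parameter $t$ gives $\M(AD_n(\wt_{D'}))=t^{n}\,\M(AD_n(\wt_D))$, which is the asserted formula. The row case is identical via the row-version of the block description. For case (b), the parts $\{2i-1,2i\}$, $1\le i\le n$, are the $n$ column-pairs of the form $\{2b-1,2b\}$, the associated independent set has $n+1$ vertices, and the Star Lemma yields $\M(AD_n(\wt_{D''}))=t^{n+1}\,\M(AD_n(\wt_D))$; again the row case is the same.

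The one substantive point is the structural claim of the second paragraph: pinning down the exact placement of the $2n\times2n$ array on $AD_n$, verifying the $2\times2$-block incidence picture with its boundary truncations, and — the crux — checking that a fixed color class contributes $n$ vertices to a column-pair of one parity and $n+1$ to a column-pair of the other. This asymmetry is precisely a reflection of the diamond shape of $AD_n$, and once it is in place everything else is a mechanical sequence of Star Lemma moves.
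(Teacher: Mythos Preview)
Your argument is correct and is exactly the intended one. The paper does not supply its own proof of this lemma; it simply quotes it from Ciucu's paper, and immediately afterward states Lemma~\ref{matrixstar} as ``a consequence of the Star Lemma~\ref{star},'' which is precisely the mechanism you use. Your block description of the two color classes matches the block decomposition in Lemma~\ref{matrixstar} (for one color) and its transpose (for the other), and the counts $n$ versus $n+1$ follow from the fact that one color class is indexed by $\{1,\dots,n+1\}\times\{1,\dots,n\}$ and the other by $\{1,\dots,n\}\times\{1,\dots,n+1\}$. Nothing is missing.
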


We consider a consequence of the Star Lemma \ref{star}.

\begin{lemma}\label{matrixstar}
Consider the Aztec diamond graph of order $n$ with $2n\times 2n$ weight matrix $D=(m_{ij})_{1\leq i,j\leq 2n}$. We divide the matrix $D$ into $(n+1)\cdot n$ blocks $M_{ij}$ defined as follows.
\begin{enumerate}
 \item $M_{1,j}:=\begin{bmatrix}m_{1,2j-1}&m_{1,2j}\end{bmatrix}$, for $j=1,\dots,n$;

\item $M_{i,j}:=\begin{bmatrix}m_{2i-2,2j-1}&m_{2i-2,2j}\\m_{2i-1,2j-1}&m_{2i-1,2j}\end{bmatrix}$, for $i=2,\dots,n$ and $j=1,2,\dots,n$;

\item $M_{n+1,j}:=\begin{bmatrix}m_{2n,2j-1}& m_{2n,2j}\end{bmatrix}$, for $j=1,\dots,n$.
\end{enumerate}
Let $D'$ be the matrix obtained from $D$ by multiplying all entries of some block $M_{i,j}$ by $t>0$, then
\begin{equation}
\M(AD_n(\wt_{D'}))=t\,\M(AD_n(\wt_D)).
\end{equation}
\end{lemma}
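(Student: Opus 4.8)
The plan is to show that the blocks $M_{i,j}$ merely repackage the stars (the sets of incident edges) of the vertices of one colour class of $AD_n$, so that the lemma follows from a single application of the Star Lemma.

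First I would fix, once and for all, the standard drawing of the Aztec diamond graph that is used to define the weight matrix --- the one from Figure \ref{Fig1}, rotated so that the midpoints of the $4n^2$ edges of $AD_n$ fill the cells of a $2n\times 2n$ array, with $m_{ij}$ the weight of the edge whose midpoint occupies the cell in row $i$ (from the top) and column $j$ (from the left). In this drawing $AD_n$ is bipartite and balanced, each colour class having $n(n+1)$ vertices, which is precisely the number $(n+1)\cdot n$ of blocks, and every vertex has degree $2$ or $4$. A direct inspection of the drawing shows that the blocks $M_{i,j}$ are in bijection with the vertices of one of the two colour classes: for $2\le i\le n$, the four cells of the $2\times 2$ block $M_{i,j}$ hold exactly the midpoints of the four edges of an interior (degree-$4$) vertex $v_{i,j}$ of that class; and for $i=1$ (resp.\ $i=n+1$), the two cells of the $1\times 2$ block $M_{1,j}$ (resp.\ $M_{n+1,j}$) hold the midpoints of the two edges of a degree-$2$ vertex $v_{1,j}$ (resp.\ $v_{n+1,j}$) of that class lying along the top (resp.\ bottom) of the drawing. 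Equivalently, the entries of $M_{i,j}$ are precisely the $\wt$-values of the edges in the star of $v_{i,j}$. That the blocks partition the whole matrix $D$ matches the fact that each of the $4n^2$ edges of $AD_n$ meets exactly one vertex of the chosen colour class; in particular the accounting $2\cdot 2n+4\cdot n(n-1)=4n^2$ is automatic.

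Granting this identification, the conclusion is immediate. Fix $(i,j)$. Passing from $D$ to $D'$ multiplies by $t$ every entry of the block $M_{i,j}$ and leaves every other entry unchanged; in $AD_n(\wt_D)$ this is exactly the operation of multiplying by $t$ the weight of every edge incident to $v_{i,j}$, leaving all other edge weights alone. Applying the Star Lemma \ref{star} to the vertex $v_{i,j}$ then gives $\M(AD_n(\wt_{D'}))=t\,\M(AD_n(\wt_D))$, which is the asserted identity. The one step that needs care --- and the main obstacle --- is the first: verifying against the fixed convention for the weight matrix that the row grouping $\{1\},\{2,3\},\dots,\{2n-2,2n-1\},\{2n\}$ and the column grouping $\{1,2\},\{3,4\},\dots,\{2n-1,2n\}$ cut the array exactly along the stars of one colour class. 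This is routine once the drawing is pinned down --- it is the same sort of bookkeeping that underlies Lemma \ref{mulstar}, where a whole row or column (or a pair of them) is rescaled instead --- the only subtlety being the split between the interior vertices, which yield the $2\times 2$ blocks, and the top and bottom boundary vertices, which yield the degenerate $1\times 2$ blocks $M_{1,j}$ and $M_{n+1,j}$.
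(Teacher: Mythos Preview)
Your proposal is correct and is exactly the approach the paper has in mind: the paper introduces Lemma~\ref{matrixstar} simply as ``a consequence of the Star Lemma~\ref{star}'' and gives no further proof, and you have supplied precisely the missing identification --- that the blocks $M_{i,j}$ are the stars of the $n(n+1)$ vertices of one colour class of $AD_n$ (the $2n$ degree-$2$ boundary vertices giving the $1\times 2$ blocks and the $n(n-1)$ interior degree-$4$ vertices giving the $2\times 2$ blocks). Your edge-count check $2\cdot 2n + 4\cdot n(n-1)=4n^2$ and the observation that each edge meets exactly one vertex of the chosen class confirm that the blocks partition $D$, so a single application of the Star Lemma at $v_{i,j}$ finishes the argument.
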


For a $k\times l$ matrix $A$ with $k$ and $l$ even we define a new $k\times l$ matrix $\de(A)$ as follows. Divide matrix $A$ into $2\times2$ blocks
\[
\begin{bmatrix}
x &w\\
y &z
\end{bmatrix},
\]
and assume $xz+yw\not=0$ for all such blocks. Replace each such block by the following block
\[
\begin{bmatrix}
z/(xz+yw) &y/(xz+yw)\\
w/(xz+yw) &x/(xz+yw)
\end{bmatrix}.
\]
We get a new  $k\times l$ matrix, denoted by $B$. Define $\de(A)$ to be the $k\times l$ matrix obtained from $B$ by cyclically shifting its columns one unit up and cyclically shifting the rows of resulting matrix one unit left.

\begin{figure}\centering
\includegraphics[width=0.4\textwidth]{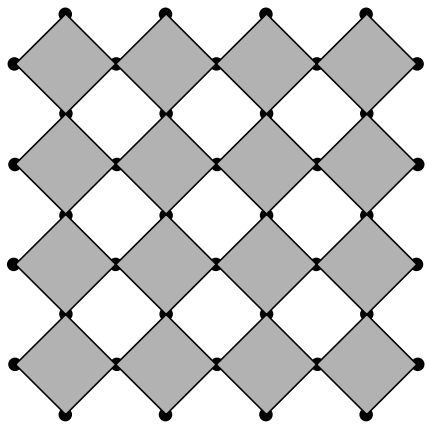}
\caption{The edges of $AD_4$ are partitioned into $4^2$ cells.}
\label{Fig5}
\end{figure}

The edges of the Aztec diamond graph $AD_n$ can be partitioned into $n^2$ $4$-cycles, which we call the \textit{cells}\footnote{The definition of cells and cell-factors above has been used for a more general family of graphs, named \textit{cellular graphs}. One can see \cite{Ciucu2}, \cite{Ciucu5} and \cite{Ciucu4} for more details.} of the graph, such that each vertex is contained in at most two cells (see the shaded diamonds in Figure \ref{Fig5}). The Aztec diamond graph $AD_n$ has $n$ rows and $n$ columns of cells. If the cell $c$ has edges weighted by $x,$ $y,$ $z,$ $t$ (in cyclic order), then the \textit{cell-factor} $\Delta(c)$ of $c$ is defined by setting $\Delta(c):=xz+yt$.

Assume that the Aztec diamond graph of order $0$ has matching generating function 1.  We have the following Reduction Theorem due to Propp. \cite{propp}

\begin{theorem}[Reduction Theorem]\label{reduction}
Assume that the cells of $AD_n(\wt_A)$ have nonzero cell-factors. Then
\begin{equation}
\M(AD_n(\wt_A))= \M(AD_{n-1}(\wt_{\de(A)})) \prod_c\Delta(c),
\end{equation}
where the product is taken over all cells $c$ of $AD_n(\wt_A)$.
\end{theorem}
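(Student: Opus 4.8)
The plan is to prove the recurrence by performing a single global \emph{urban renewal} step on $AD_n(\wt_A)$ and then recognizing what is left. First I would apply the Spider Lemma~\ref{spider}(a) simultaneously to all $n^2$ cells of $AD_n(\wt_A)$. This is legitimate because, by the definition of cells, the edge set of $AD_n$ is partitioned into the cells, the four inner vertices of each cell have no neighbors outside that cell, and the cell-factors are nonzero by hypothesis. A cell $c$ whose edges correspond (in cyclic order) to the block $\left[\begin{smallmatrix} x & w\\ y & z\end{smallmatrix}\right]$ of the weight matrix, so that $\Delta(c)=xz+yw$, is replaced by its dual gadget $\overline K$ as in Figure~\ref{Fig3}: a new $4$-cycle joined to the old outer vertices $A,B,C,D$ by four new edges of weight $1$, the new $4$-cycle carrying the reciprocal weights $z/\Delta(c),\,y/\Delta(c),\,w/\Delta(c),\,x/\Delta(c)$ --- which are exactly the entries appearing in the $2\times 2$ block rule of the operator $\de$. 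Each replacement contributes the scalar $\Delta(c)$, so, writing $G'$ for the graph obtained after all $n^2$ replacements,
\[
\M\bigl(AD_n(\wt_A)\bigr)=\Bigl(\prod_c\Delta(c)\Bigr)\,\M(G').
\]

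The core of the argument is then to identify $G'$, after removing forced edges, with a weighted Aztec diamond of order $n-1$. In $G'$ each vertex of $AD_n$ that belonged to two cells now has degree $2$ (one weight-$1$ leg into each of the two corresponding new $4$-cycles), while each vertex of $AD_n$ on the outer boundary belonged to a single cell and now has degree $1$. The degree-$1$ vertices force their weight-$1$ edges; removing them (together with the incident vertices) triggers a cascade that strips off the outermost band of new edges, and iterating this peels from $G'$ an entire \emph{frame} of weight-$1$ forced edges. What survives is precisely the Aztec diamond graph $AD_{n-1}$: its $(n-1)^2$ cells are the new $4$-cycles sitting at an $(n-1)\times(n-1)$ sub-block of the $n\times n$ grid of new $4$-cycles, fused together through the surviving degree-$2$ vertices in the Aztec-diamond adjacency pattern. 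Because every removed edge has weight $1$, no extra scalar is introduced, so $\M(G')=\M\bigl(AD_{n-1}(\wt)\bigr)$ for the induced weight assignment $\wt$.

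It remains to verify that $\wt=\wt_{\de(A)}$. Each cell of this $AD_{n-1}$ lies where four cells of $AD_n$ met, and tracking its four edge-weights through the replacement and the frame removal shows they are assembled --- one from each of those four neighboring cells --- exactly according to: (i) the antipodal-swap-and-normalize rule on $2\times 2$ blocks of the weight matrix, which is the block rule in the definition of $\de$ and records the reciprocal weights produced by the Spider Lemma~\ref{spider}(a); followed by (ii) the cyclic shift of columns one unit up and then of rows one unit left, which records that the cells of $AD_{n-1}$ are indexed not by the cells of $AD_n$ but by the ``interior nodes'' of its cell-grid (equivalently, the loss of the frame along two sides). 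One checks in addition that a $k\times l$ periodic weight pattern is carried to a $k\times l$ periodic weight pattern, so that $\wt=\wt_{\de(A)}$; combined with the displayed identity this yields the theorem. The base case $n=1$ is immediate, as $AD_1$ is a single $4$-cycle with $\M=xz+yw=\prod_c\Delta(c)$ and $\M(AD_0)=1$ by convention.

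The genuinely delicate part is the geometric bookkeeping in the last two paragraphs: one must carry the local picture of Figure~\ref{Fig3} through globally and check with care that (a) the boundary of $G'$ collapses as an honest iterated sequence of forced-edge removals with no stray weights, (b) the leftover graph is literally $AD_{n-1}$ and not merely homeomorphic to it, and (c) the reindexing of cells is captured \emph{exactly} by the up-then-left cyclic shift in the definition of $\de$, with the right pairing of block entries to edge positions and with periodicity preserved. Lemmas~\ref{VS}, \ref{ER} and~\ref{star}, together with the matrix normalizations in Lemma~\ref{mulstar} and Lemma~\ref{matrixstar}, are the natural tools for disposing of any residual degree-$2$ vertices and unit-weight edges that appear along the way.
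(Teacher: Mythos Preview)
The paper does not prove this theorem: it is quoted as a result of Propp \cite{propp} (called ``Generalized Domino-Shuffling'' there) and used as a black box throughout, so there is no in-paper argument to compare yours against. That said, your outline is essentially the standard urban-renewal derivation found in Propp's and Ciucu's work, so the approach is correct in spirit.

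There is one genuine slip worth flagging. You assert that ``the four inner vertices of each cell have no neighbors outside that cell,'' but in $AD_n$ as defined here a cell is a bare $4$-cycle and its four vertices are precisely the vertices shared with the neighboring cells --- they \emph{do} have outside neighbors. Lemma~\ref{spider}(a) therefore does not apply directly to the cells. The fix is to first invoke the Vertex-splitting Lemma~\ref{VS} at every vertex of $AD_n$ lying in two cells; this manufactures the weight-$1$ pendant legs that the Spider Lemma requires, and only then is each cell an honest spider whose inner vertices have no outside neighbors. You mention Lemma~\ref{VS} as a cleanup tool in your last paragraph, but it belongs at the very start of the argument, not the end. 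With that correction in place the remainder of your plan --- the forced-edge collapse along the boundary, the identification of the residual graph with $AD_{n-1}$, and the verification that the induced weights are exactly $\wt_{\de(A)}$ including the cyclic shift --- is sound, though, as you rightly note, the bookkeeping in steps (b) and (c) demands care.
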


With the assumption that all cell-factors are nonzero in $AD_{n-i}(\wt_{\de^{(i)}(A)})$, for $i=0,1,2,\dotsc, n$, we can apply Theorem \ref{reduction} consecutively until we get down the Aztec diamond of order $0$. In particular cases, the weight pattern repeats or changes in a simple predictable way after a small number of successive applications. This provides some recurrences, and the matching generating function of the original weighted Aztec diamond can be obtained recursively.

\section{Generalization of fortress regions}
\label{sec3}

Yang  \cite{Yang} showed that the number of tilings of  a \textit{fortress} (called \emph{Penta-Aztec-Diamond} in \cite{Yang}; see the left picture in Figure \ref{Fig6} for an example) on the square lattice with all diagonals drawn in is always  a power of $5$ or twice a power of 5. In particular, the number of tilings of the fortress of order $n$, denoted by $F_n$, is obtained by the following theorem.

\begin{figure}\centering
\includegraphics[width=0.80\textwidth]{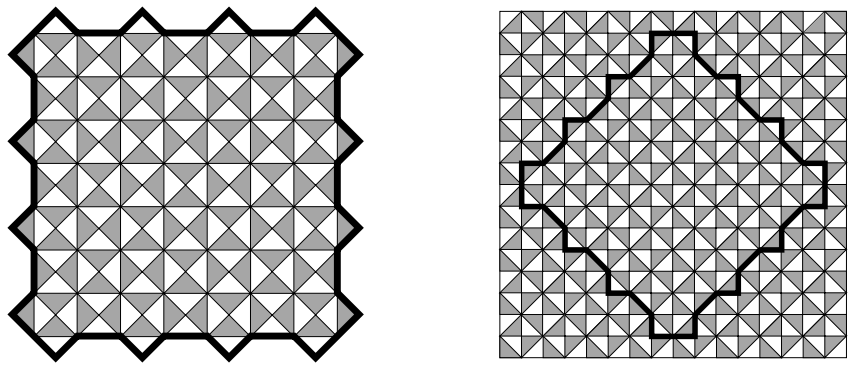}
\caption{The fortress of order 7 on two different lattices.}
\label{Fig6}
\end{figure}

\begin{theorem}[Theorem 3.1. in \cite{Yang}]
\begin{equation}
\T(F_n)=
\begin{cases}
5^{k^2} &\text{if $n=2k$;}\\
5^{2k(2k+1)} &\text{if $n=4k+1$;}\\
2\cdot5^{2k(2k-1)} &\text{if $n=4k-1$.}\\
\end{cases}
\notag\end{equation}
\end{theorem}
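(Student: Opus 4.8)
The plan is to follow the machinery developed in Section 2, specializing it to the dual graph of the fortress $F_n$. First I would identify the dual graph $G_n$ of $F_n$ and, using the Vertex-splitting Lemma (Lemma \ref{VS}) and the Edge-replacing Lemma (Lemma \ref{ER}), deform $G_n$ into a weighted Aztec diamond graph $AD_m(\wt_A)$ for an appropriate order $m$ (roughly $m=\lceil n/2\rceil$ or so) and an explicit periodic weight pattern $A$. The diagonals of the square lattice underlying the fortress contribute the extra edges that, after urban renewal (Spider Lemma \ref{spider}(a)) applied at each cell, produce the characteristic weight $5$: a $2\times 2$ block of the weight matrix will have cell-factor $xz+yt$ equal to something like $1\cdot 1+2\cdot 2=5$ once the weights have been normalized, which is exactly where the power of $5$ enters.

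Next I would compute $\de(A)$ for this specific weight pattern and check that after one or two applications of the operator $\de$ the pattern returns to a scalar multiple of itself, using the Star Lemma \ref{star} (or its matrix versions, Lemmas \ref{mulstar} and \ref{matrixstar}) to absorb the overall scalar into a clean recurrence. The point is that $\de$ acting on the $2\times 2$ (or $4\times 4$) pattern of $F_n$ is eventually periodic, so Theorem \ref{reduction} (the Reduction Theorem) gives $\M(AD_m(\wt_A)) = \left(\prod_c \Delta(c)\right)\M(AD_{m-1}(\wt_{\de(A)}))$ and iterating produces a product telescoping to $5^{(\text{count of cells across all stages})}$, up to the bookkeeping scalars introduced by the Star/multi-star lemmas. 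The case distinction $n=2k$, $n=4k+1$, $n=4k-1$ will arise because the fortress has two different ``types" of boundary depending on the parity of $n$, and within the odd case the weight pattern one lands on after the first $\de$ step differs by a factor that is or is not a perfect square, which accounts for the lone factor of $2$ in the $n=4k-1$ case (and its absence when $n=4k+1$).

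Concretely, the key steps in order are: (1) draw $G_n = $ dual graph of $F_n$ and list its structure near the boundary; (2) apply vertex-splitting/edge-replacing to realize $G_n$ (after removing forced edges, recording their weight-$1$ contributions) as $AD_m(\wt_A)$ with an explicit $2\times2$-periodic (after some boundary adjustments) weight matrix; (3) apply urban renewal at all cells to reduce the diagonal edges, picking up the factor $(xz+yt)^{m^2}=5^{m^2}$ or similar; (4) compute $\de(A)$, verify eventual periodicity, and use the Star and multi-star lemmas (Lemmas \ref{mulstar}, \ref{matrixstar}) to normalize; (5) iterate the Reduction Theorem down to $AD_0$, multiply the accumulated cell-factor products and scalar corrections, and simplify into the three claimed closed forms by a careful count of cells at each order (which reduces to summing an arithmetic-type series in $k$, giving exponents like $k^2$ and $2k(2k\pm1)$).

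The main obstacle I expect is step (2) combined with the boundary bookkeeping in step (5): getting the exact weight pattern right on the boundary rows and columns of the Aztec diamond (the fortress boundary is irregular, alternating between two local shapes), and then tracking precisely how many cells appear at each stage of the reduction so that the exponents come out to be $k^2$, $2k(2k+1)$, and $2k(2k-1)$ rather than something off by a constant. The parity-dependent appearance of the single factor $2$ is delicate --- it has to come out of exactly one place, presumably a $2\times 2$ block whose cell-factor is $2$ rather than $1$ or $5$ at one specific stage of one of the odd cases --- so I would verify the small cases $F_1, F_3, F_5, F_7$ by hand against the formula as a sanity check before trusting the general computation.
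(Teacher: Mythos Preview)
Your plan is on the right track and matches the paper's framework: the paper obtains Yang's theorem as the special case $d_1=\cdots=d_m=1$ of Theorem~\ref{main}, whose proof is exactly ``transform the dual graph into a weighted Aztec diamond via local replacements, then run the Reduction Theorem using periodicity of $\de$.'' A couple of your concrete guesses are off, though, and would send the computation astray if you committed to them. First, the Aztec diamond that appears has order $n$, not $\lceil n/2\rceil$: the dual graph of $F_n$ decomposes into $n$ rows of ``cities'' (Lemma~\ref{trans} packages the vertex-splitting and Spider Lemma steps for each extended city), and after the replacement you land on $AD_n(\wt_{D_{1/2,1}})$. Second, the relevant periodic weight pattern is $4\times4$ with entries $1/2$ and $1$, not $2\times2$ with entries $1$ and $2$; the factor of $5$ enters as $a^2+b^2=(1/2)^2+1^2=5/4$, with the stray powers of $2$ cancelled by the factor $2^{\mathcal C}$ coming from the city replacements (equation~(\ref{ratio})). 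The Edge-replacing Lemma is not needed here. Once these details are fixed, your steps (3)--(5) go through exactly as in the proof of Corollary~\ref{fortressw} specialized to $a=1/2$, $b=1$, and the case split together with the lone factor of $2$ falls out of the parity analysis encoded in the parameter $\alpha$ of Theorem~\ref{main}(b).
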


A fortress (after rotated $45^o$) can be viewed also as  a region on the square lattice with all \textit{second} diagonals drawn in. The vertices of the fortress of order $n$ are now the vertices of a diamond of side-length $n\sqrt{2}$ (see the right picture in Figure \ref{Fig6}).

We consider next a generalization of the fortresses defined as follows. Assume $d_1,$ $d_2,$ $\dotsc,$  $d_m$ are positive integers. Draw in all second southwest-to-northeast diagonals. Then draw in $m+1$ southeast-to-northwest diagonals, with the distances between successive ones, starting from bottom, being $d_1\sqrt{2},$ $d_2\sqrt{2},\dotsc,d_m\sqrt{2}$. Assume in addition that each of the $m+1$ southeast-to-northwest diagonals intersects each of the second southwest-to-northeast diagonals at a lattice point of the square lattice.

Pick two lattice points $A$ and $B$ on the bottom southeast-to-northwest diagonal drawn in, and pick two lattice points $C$ and $D$ on the top southeast-to-northwest diagonal drawn in, so that $A,B,C,D$ are four vertices of a diamond of side-length $(d_1+d_2+\dotsc+d_m)\sqrt{2}$ in cyclic order.

Color the resulting dissection of the square lattice black and white so that any two elementary regions that share an edge have opposite color. Without loss of generality, we assume that the triangular elementary region, which has two edges resting on the segments $AD$ and $AC$, is white.

Start from $A$ and take unit steps south, east or southeast so that for each step the color of the elementary region on the left is white. This path ends when reaching $B$. The described path from $A$ to $B$ is the southwestern boundary of the region. We get the southeastern boundary by going from $B$ to $C$ in similar fashion with  unit steps north, east or northeast so that the elementary region on the left is black for each step. The northeastern boundary is obtained by reflecting the southeastern boundary about the line passing $A$ and $C$; and the northwestern boundary is obtained by reflecting the southwestern boundary about the line passing $A$ and $C$ (see Figure \ref{Fig7} for examples).

\begin{figure}\centering
\includegraphics[width=0.75\textwidth]{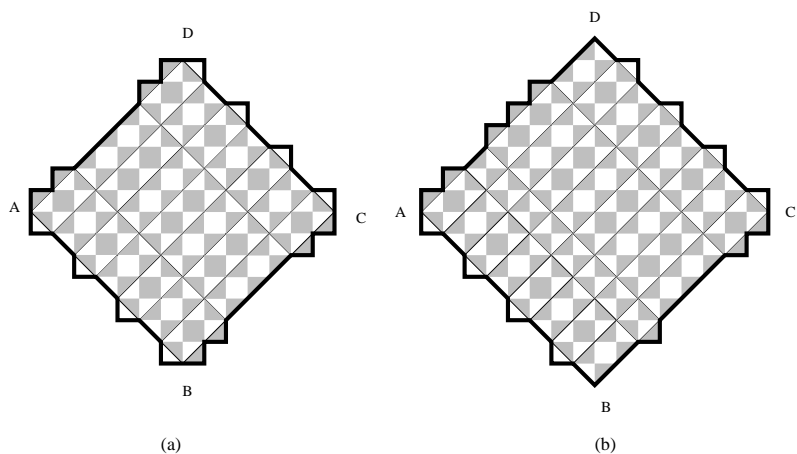}
\caption{Two regions  $F(2,3,2)$ (a) and $F(2,1,3,2)$ (b).}
\label{Fig7}
\end{figure}

Denote by $F(d_1,d_2,\dots, d_m)$ the region bordered by four lattice paths above; we call it a \textit{generalized fortress}. When $d_1=d_2=\dotsc=d_m=1$, we get the original fortress of order $m$.

We are also interested in a variant of the generalized fortress $F(d_1,\dots,d_m)$ defined as follows. Repeat the whole process in the definition of $F(d_1,\dotsc,d_m)$ above, with the one change that on the southwestern boundary we make the switch from the rule ``white on left'' to ``black on left''; and on the southeastern boundary we make the switch from the rule ``black on left'' to ``white on left''. Denote by $\overline{F}(d_1,d_2,\dots, d_m)$ the resulting region (see Figure \ref{Fig8}  for examples).

\begin{figure}\centering
\includegraphics[width=0.75\textwidth]{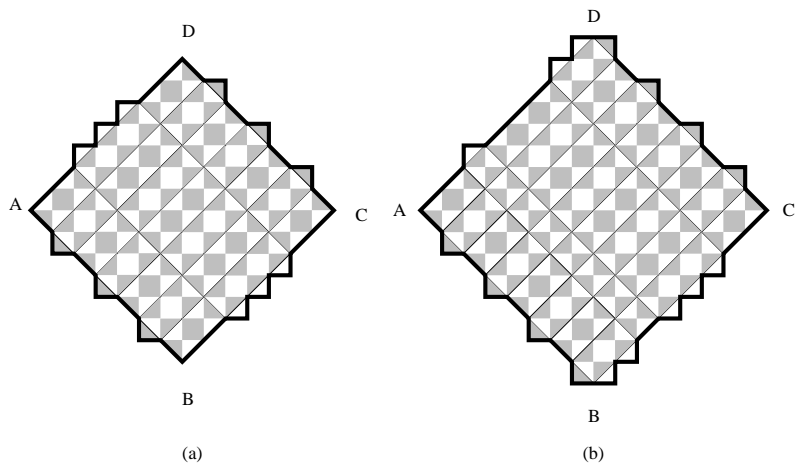}
\caption{Two regions  $\overline{F}(2,3,2)$ (a) and $\overline{F}(2,1,3,2)$ (b).}
\label{Fig8}
\end{figure}

The number of tilings of the generalized fortresses is given by the following theorem.

\begin{theorem}\label{main}Let $d_1,\dotsc,d_m$ be positive integers. Let $s_j:=d_1+d_2+\dotsc+d_j$ ($s_0:=0$)  and $S:=\sum_{j=1}^m\min(s_j, s_m-s_j)$, for $j=1,2,\dots,m$.

(a) If  $d_1+\dots+d_m=2k$, then
\begin{equation}\label{maineven}
\T(F(d_1,d_2,\dotsc,d_m))=\T(\overline{F}(d_1,d_2,\dotsc,d_m))= 2^{2k^2-2S}\, 5^{S}.
\end{equation}

(b) If $d_1+\dots+d_m=2k+1$, then
\begin{equation}\label{mainodd1}
\T(F(d_1,d_2,\dots,d_m))= 2^{\alpha}\, 5^{S},
\end{equation}
\begin{equation}\label{mainodd2}
\T(\overline{F}(d_1,d_2,\dotsc,d_m))=2^{(2k+1)^2-4S-\alpha}\, 5^{S},
\end{equation}

where
\begin{equation}
\alpha=\begin{cases}
k(2k+2)-2S &\text{if $s_{2j}<k+1\leq s_{2j+1}$ for some $j$;}\\
k(2k+2)+1-2S &\text{ otherwise.}
\end{cases}
\notag\end{equation}

\end{theorem}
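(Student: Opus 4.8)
The plan is to follow the general strategy outlined in Section~2: realize the dual graph of $F(d_1,\dots,d_m)$ (and of $\overline F(d_1,\dots,d_m)$) as a weighted Aztec diamond graph $AD_n(\wt_A)$ for an appropriate order $n$ and a periodic weight pattern $A$, then iterate the operator $\de$ and apply the Reduction Theorem (Theorem~\ref{reduction}) to extract $\M(AD_n(\wt_A))$. First I would set $n=2k$ in the even case and $n=2k+1$ in the odd case, where $d_1+\dots+d_m$ equals $2k$ or $2k+1$ respectively, and use the Vertex-splitting Lemma~\ref{VS}, the Edge-replacing Lemma~\ref{ER}, and the Spider Lemma~\ref{spider}(a) (urban renewal) to convert the square-lattice-with-some-diagonals dual graph into an Aztec diamond graph whose cell-factors record where the southeast-to-northwest diagonals have been drawn. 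The key structural point is that a cell of $AD_n$ corresponding to a square with a diagonal drawn in contributes a cell-factor of $5$ after urban renewal (two "doubled" opposite edges), whereas a cell over a square with no diagonal contributes a cell-factor of $2$; hence the initial weight matrix $A$ is built from $\{1,2\}$-entries encoding the diagonal pattern determined by $s_1,\dots,s_m$, and the constants $2^{2k^2-2S}5^S$ etc. will emerge as the accumulated product $\prod_c \Delta(c)$ over the successive reductions.

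Next I would analyze the dynamics of $\de$ on this weight pattern. Because the second southwest-to-northeast diagonals are drawn everywhere and the southeast-to-northwest diagonals occur at heights $s_1\sqrt2,\dots,s_m\sqrt2$, the pattern $A$ has a block structure that, under one application of $\de$ (block $2\times2$ inversion followed by the up-shift of columns and left-shift of rows), maps to a pattern of the same combinatorial type but with the diagonal heights shifted; tracking this carefully should show that after two applications of $\de$ the pattern returns to a rescaled version of itself, up to the global multiplications controlled by Lemmas~\ref{mulstar} and~\ref{matrixstar}. Using those scaling lemmas to normalize the entries at each stage, I would set up a recurrence for $\M(AD_{n-i}(\wt_{\de^{(i)}(A)}))$ in terms of $\M(AD_{n-i-2}(\wt_{\de^{(i+2)}(A)}))$, whose solution telescopes down to the Aztec diamond of order $0$ (matching generating function $1$). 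The quantity $S=\sum_{j=1}^m\min(s_j,s_m-s_j)$ counts, in a precise sense, the total "overlap length" of the drawn diagonals with the triangular profile of the region, and it is exactly this count that is preserved (as the exponent of $5$) through the reduction, while the remaining cells contribute the powers of $2$.

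The two variants $F$ and $\overline F$ differ only in the boundary rule switch on the southwestern and southeastern boundaries; in the Aztec-diamond model this corresponds to a different choice of which border row/column of cells is "trimmed" by forced edges, i.e. to replacing the initial weight matrix $A$ by a closely related $A'$ with the outermost two rows/columns modified. For the even case the symmetry of the construction forces $\M$ to be the same for both, giving the single formula~\eqref{maineven}; for the odd case the asymmetry produces the split exponent $\alpha$, and the case distinction "$s_{2j}<k+1\le s_{2j+1}$ for some $j$" is precisely the condition describing whether the extra middle diagonal line falls on an even or odd index relative to the center of the region, which changes the count of degree-two vertices (hence an extra factor of $2$, reflected in the $+1$ in the second case of $\alpha$), and~\eqref{mainodd2} then follows from~\eqref{mainodd1} by the identity $(2k+1)^2 = \alpha + \big((2k+1)^2-4S-\alpha\big) + 4S$ together with the observation that swapping $F$ for $\overline F$ complements the roles of the $2$-cells and $5$-cells along the modified boundary.

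I expect the main obstacle to be the bookkeeping in the second paragraph: proving rigorously that $\de$ maps the family of "fortress-type" weight patterns into itself with the predicted shift of the parameters $s_j$, and that the scaling factors from Lemmas~\ref{mulstar}--\ref{matrixstar} combine to produce exactly $2^{2k^2-2S}5^S$ (and the odd-case analogues) rather than some off-by-a-bounded-amount expression. This will likely require an explicit, somewhat intricate description of $\de^{(i)}(A)$ for all $i$, organized by the residue of $i$ modulo the period, and a careful induction; the geometric identification in the first paragraph and the $F$-versus-$\overline F$ comparison in the third are comparatively routine once the right picture is fixed.
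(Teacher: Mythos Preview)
Your overall strategy---convert the dual graph to a weighted Aztec diamond via urban renewal, then iterate the Reduction Theorem---is exactly the paper's, and your reading of $S$ as the exponent of $5$ emerging from the reductions and of the $F$/$\overline F$ distinction as a boundary effect is correct in spirit. However, the paper does not carry out the reduction in the ad hoc way you sketch; it factors the argument through two intermediate results that absorb essentially all of the ``bookkeeping'' you flag as the main obstacle. First, Lemma~\ref{trans} identifies the dual graph $G(d_1,\dots,d_m)$ as an $n\times m$ array of \emph{regular} and \emph{extended cities} and shows that replacing each extended city of order $k$ by a regular city with edge weights $1/2$ costs a factor $2^k$; this single step turns the dual graph into $AD_n(\wt_{D_{1/2,1}})$ and gives the clean ratios~(\ref{ratio})--(\ref{ratio2}). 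Second, the paper proves a standalone multi-parameter formula (Theorem~\ref{weighted}) for $\M(AD_n(\wt_A))$ with the $4\times 2n$ pattern $A$ having columns $(a_i,a_i,c_i,c_i)^T$, $(b_i,b_i,d_i,d_i)^T$, and then specializes it in Corollary~\ref{fortressw} to $D_{a,b}$; the factor $(a^2+b^2)^S$ there is where the $5$ actually appears, via $a=1/2,\ b=1$.

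Two specific corrections to your picture. The weights after the conversion are $\{1/2,1\}$, not $\{1,2\}$, and the $5$ is \emph{not} a raw cell-factor of the initial Aztec diamond: in $AD_n(\wt_{D_{1/2,1}})$ the cell-factors are $2a_ib_i$ or $2c_id_i$, all equal to $1/2$ or $2$, and it is only after one step that the mixed factors $\Delta_i=a_{i+1}d_i+b_ic_{i+1}\in\{1,5/4\}$ appear. Relatedly, the $\de$-dynamics is not period $2$ up to rescaling in the sense you need: after two applications plus Lemma~\ref{mulstar} you land on a pattern $B$ of the same \emph{shape} but with reciprocal entries, and only after two more steps do you return to the original type with its first and last four columns stripped (the operator $\phi$ in the proof of Theorem~\ref{weighted}); the working recurrence is (\ref{eq6})/(\ref{eq6n}), which drops the order by $4$. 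Your proposal would eventually rediscover all of this, but as written it underestimates the structure needed and misidentifies where the $5$'s come from.
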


Before going to the proof of Theorem \ref{main}, we present a number of results about weighted Aztec diamonds with multi-parameter weight pattern, which we will employ in the proof of Theorem \ref{main}.

Stanley found that some periodic weights of the Aztec diamond give a simple product formula for the matching generating function (see \cite{Ciucu2} or Section 2.3 in \cite{Yang}). Recall that we denote by $AD_n(\wt_S)$ the Aztec diamond graph of order $n$ with weight pattern $S$.
\begin{theorem}[Stanley]
\begin{equation}\label{stanleyformula}
\M(AD_n(\wt_S))=\prod_{1\leq i\leq j\leq n }(x_iw_j+y_it_j),
\end{equation}
where
\begin{equation}
S=\begin{bmatrix}
x_1&y_1&x_2&y_2&\dots&x_{n-1}&y_{n-1}&x_n&y_n\\
t_1&w_1&t_2&w_2&\dots&t_{n-1}&w_{n-1}&t_n&w_n
\end{bmatrix}.
\notag\end{equation}
\end{theorem}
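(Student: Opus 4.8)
\section*{Proof proposal}

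The plan is to prove Stanley's formula by induction on $n$, using the Reduction Theorem (Theorem \ref{reduction}) and the operator $\de$; this is essentially the approach of Yang and Ciucu referenced above. The base case is immediate from the conventions: $\M(AD_0)=1$, and for $n=1$ the single cell of $AD_1(\wt_S)$ has cell-factor $x_1w_1+y_1t_1$, so $\M(AD_1(\wt_S))=x_1w_1+y_1t_1$. For the inductive step one first records the $2n\times 2n$ weight matrix of $AD_n(\wt_S)$: since the pattern $S$ is $2\times 2n$, this matrix is simply $n$ vertical copies of $S$, hence it is periodic of period $2$ in the row direction. Consequently every cell in the $j$-th column of the $n\times n$ array of cells of $AD_n$ carries the four weights $x_j,y_j,t_j,w_j$, so its cell-factor equals $\Delta_j:=x_jw_j+y_jt_j$, and therefore
\[
\prod_{c}\Delta(c)=\prod_{j=1}^{n}\Delta_j^{\,n}=(\Delta_1\cdots\Delta_n)^{\,n}.
\]

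The heart of the argument is to compute $\de(S)$. Applying the block-replacement step of the definition of $\de$ to each $2\times 2$ block $\left[\begin{smallmatrix}x_j&y_j\\ t_j&w_j\end{smallmatrix}\right]$ of the weight matrix produces $\frac{1}{\Delta_j}\left[\begin{smallmatrix}w_j&t_j\\ y_j&x_j\end{smallmatrix}\right]$; the two subsequent cyclic shifts (columns one unit up, then rows one unit left) preserve the period-$2$-in-rows structure, so the output is again encoded by a $2\times 2(n-1)$ Stanley-type matrix once we restrict to the columns seen by $AD_{n-1}$. The structural point that makes the induction close is precisely this: $\de$ carries a row-$2$-periodic pattern to a row-$2$-periodic pattern, and the new Stanley parameters are the old ones divided by the cell-factors and shifted by one index on the appropriate half of the entries, namely
\[
x'_k=\frac{x_k}{\Delta_k},\qquad y'_k=\frac{y_k}{\Delta_k},\qquad t'_k=\frac{t_{k+1}}{\Delta_{k+1}},\qquad w'_k=\frac{w_{k+1}}{\Delta_{k+1}}\qquad(k=1,\dots,n-1).
\]

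It then remains to assemble the pieces. With the parameters above, $x'_iw'_j+y'_it'_j=\dfrac{x_iw_{j+1}+y_it_{j+1}}{\Delta_i\Delta_{j+1}}$, so the induction hypothesis gives
\[
\M\bigl(AD_{n-1}(\wt_{\de S})\bigr)=\prod_{1\le i\le j\le n-1}\bigl(x'_iw'_j+y'_it'_j\bigr)=\frac{\prod_{1\le i\le j\le n-1}\bigl(x_iw_{j+1}+y_it_{j+1}\bigr)}{\prod_{1\le i\le j\le n-1}\Delta_i\Delta_{j+1}} .
\]
A short count shows that each $\Delta_m$ occurs with exponent exactly $n-1$ in the denominator, so the denominator equals $(\Delta_1\cdots\Delta_n)^{\,n-1}$, while reindexing $j\mapsto j-1$ turns the numerator into $\prod_{1\le i<j\le n}(x_iw_j+y_it_j)$. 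Plugging into the Reduction Theorem,
\[
\M(AD_n(\wt_S))=(\Delta_1\cdots\Delta_n)^{\,n}\cdot\frac{\prod_{1\le i<j\le n}(x_iw_j+y_it_j)}{(\Delta_1\cdots\Delta_n)^{\,n-1}}=(\Delta_1\cdots\Delta_n)\prod_{1\le i<j\le n}(x_iw_j+y_it_j)=\prod_{1\le i\le j\le n}(x_iw_j+y_it_j),
\]
since each $\Delta_j=x_jw_j+y_jt_j$ supplies exactly a diagonal ($i=j$) factor. To legitimize the iterated use of Theorem \ref{reduction} one needs the cell-factors of $AD_{n-i}(\wt_{\de^{(i)}S})$ to be nonzero for all $i$; this holds when the $x_i,y_i,t_i,w_i$ are generic positive reals, and since the asserted identity is an identity of (Laurent) polynomials in these variables, it then holds identically.

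I expect the main obstacle to be the second paragraph: executing the two cyclic shifts in the definition of $\de$ correctly and aligning the result with the cell/weight-matrix bookkeeping of $AD_{n-1}$ (so that $\de(S)$ genuinely restricts to a Stanley-type pattern with the stated parameters and the correct index shift), together with the exponent count for the powers of $\Delta_j$ that makes the cancellation against $\prod_c\Delta(c)$ exact. Everything else is formal once that computation is in hand.
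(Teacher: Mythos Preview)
The paper does not supply its own proof of Stanley's formula; it is quoted as a known result with references to \cite{Ciucu2} and \cite{Yang}. Your approach---induction on $n$ via the Reduction Theorem, observing that a period-$2$-in-rows pattern stays period-$2$-in-rows under $\de$---is exactly the standard argument from those references, so in outline you are on target.

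The one concrete slip is precisely at the point you flag. If you carry out the block replacement and the two cyclic shifts exactly as the paper defines them (and as the worked example $\de(A)$ in the proof of Theorem~\ref{weighted} confirms), the Stanley parameters of $\de(S)$ restricted to $AD_{n-1}$ come out as
\[
x'_k=\frac{x_k}{\Delta_k},\qquad y'_k=\frac{y_{k+1}}{\Delta_{k+1}},\qquad t'_k=\frac{t_k}{\Delta_k},\qquad w'_k=\frac{w_{k+1}}{\Delta_{k+1}},
\]
i.e.\ it is the pair $(y,w)$ whose index shifts by one, not the pair $(t,w)$ that you wrote. With your parameters the combination $x'_iw'_j+y'_it'_j$ factors cleanly, but with the parameters actually produced by the paper's $\de$ the combination that factors is $x'_iw'_j+t'_iy'_j=(x_iw_{j+1}+t_iy_{j+1})/(\Delta_i\Delta_{j+1})$. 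Your exponent count on the $\Delta$'s and your reindexing $j\mapsto j-1$ then go through verbatim and yield $\prod_{i\le j}(x_iw_j+t_iy_j)$. The apparent discrepancy with the display as printed (which has $y_it_j$) is a labeling issue in matching the entries of $S$ to the cell structure, not a mathematical one; once the bookkeeping is aligned, your argument is complete.
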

There are several related results due to B.-Y. Yang \cite{Yang}, and due to Ciucu (Corollary 4.3 and 5.5 in \cite{Ciucu5}).

In the spirit of Stanley's formula (\ref{stanleyformula}), we prove next a simple product formula for the matching generating function of the Aztec diamond with weight pattern
\begin{equation}A=
\begin{bmatrix}
a_1&b_1&a_2&b_2&\dotsc&a_{n-1}&b_{n-1}&a_n&b_n\\
a_1&b_1&a_2&b_2&\dotsc&a_{n-1}&b_{n-1}&a_n&b_n\\
c_1&d_1&c_2&d_2&\dotsc&c_{n-1}&d_{n-1}&c_n&d_n\\
c_1&d_1&c_2&d_2&\dotsc&c_{n-1}&d_{n-1}&c_n&d_n
\end{bmatrix},
\notag\end{equation}
where $a_i$, $b_i$, $c_i$ and $d_i$ are positive numbers, for $i=1,2\dotsc,n$.

\begin{theorem}\label{weighted}
(a) If $n=2k$, then
\begin{align}\label{ge}
\M(AD_n(\wt_{A}))=&2^{k(k+1)}\prod_{i=1}^{k}(a_ib_{n-i+1}c_id_{n-i+1})^{k-i+1}\notag\\
&\times\prod_{j=1}^{k}\prod_{i=j}^{n-j}(d_{i}a_{i+1}+b_{i}c_{i+1}).
\end{align}

(b) If $n=2k+1$, then
\begin{align}\label{go}
\M(AD_n(\wt_{A}))=&2^{(k+1)^2}\prod_{i=1}^{k+1}(a_ib_{n-i+1})^{k-i+2}(c_id_{n-i+1})^{k-i+1}\notag\\
&\times\prod_{j=1}^{k}\prod_{i=j}^{n-j}(d_{i}a_{i+1}+b_{i}c_{i+1}).
\end{align}
\end{theorem}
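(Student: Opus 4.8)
\emph{Proof proposal.} The plan is to prove parts (a) and (b) simultaneously by induction on $n$, applying the Reduction Theorem (Theorem~\ref{reduction}) \emph{twice} per inductive step, so that the $4\times 2n$ weight pattern returns to the exact shape of Theorem~\ref{weighted} with $n$ lowered by $2$. Because all the $a_i,b_i,c_i,d_i$ are positive, every cell-factor produced along the way will turn out to be positive, hence nonzero; thus the hypotheses of Theorem~\ref{reduction} needed to iterate $\de$ hold automatically, which I would record once at the outset.

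\textbf{Step 1: the shape of $\de(A)$ and $\de^{(2)}(A)$.} Since the first two rows of $A$ coincide and the last two rows coincide, every $2\times2$ block used in the definition of $\de$ is of the form $\left[\begin{smallmatrix}a_j&b_j\\a_j&b_j\end{smallmatrix}\right]$ or $\left[\begin{smallmatrix}c_j&d_j\\c_j&d_j\end{smallmatrix}\right]$, whose ``$xz+yw$'' is $2a_jb_j$ or $2c_jd_j$. Performing the block replacement and the two cyclic shifts, one finds that $\de(A)$ is the $4\times 2n$ pattern whose rows $1$ and $4$ agree, whose rows $2$ and $3$ agree, and which carries $\tfrac1{2b_j},\tfrac1{2a_{j+1}}$ in rows $1,4$ and $\tfrac1{2d_j},\tfrac1{2c_{j+1}}$ in rows $2,3$ of columns $2j-1,2j$ (indices read cyclically modulo $n$). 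Applying $\de$ once more, the relevant blocks are $\left[\begin{smallmatrix}p_j&q_j\\r_j&s_j\end{smallmatrix}\right]$ and $\left[\begin{smallmatrix}r_j&s_j\\p_j&q_j\end{smallmatrix}\right]$ with $p_j=\tfrac1{2b_j},\,q_j=\tfrac1{2a_{j+1}},\,r_j=\tfrac1{2d_j},\,s_j=\tfrac1{2c_{j+1}}$, both having ``$xz+yw$'' equal to the single number
\[
\Delta_j:=p_js_j+q_jr_j=\frac{a_{j+1}d_j+b_jc_{j+1}}{4a_{j+1}b_jc_{j+1}d_j},
\]
and a short computation gives that $\de^{(2)}(A)$ is again of the shape in Theorem~\ref{weighted}, with parameters $\widetilde a_j=\tfrac1{2b_j\Delta_j}$, $\widetilde b_j=\tfrac1{2a_{j+2}\Delta_{j+1}}$, $\widetilde c_j=\tfrac1{2d_j\Delta_j}$, $\widetilde d_j=\tfrac1{2c_{j+2}\Delta_{j+1}}$. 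The decisive algebraic fact, immediate from the displayed formula for $\Delta_{i+1}$, is
\[
\widetilde d_i\,\widetilde a_{i+1}+\widetilde b_i\,\widetilde c_{i+1}=\frac1{\Delta_{i+1}}.
\]

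\textbf{Step 2: cell-factor products and the recursion.} For $AD_n(\wt_A)$ the $2n\times2n$ weight matrix has its rows in equal pairs that alternate between the ``$a_j,b_j$'' and ``$c_j,d_j$'' types, so the cells in cell-column $j$ have cell-factor $2a_jb_j$ in the odd cell-rows and $2c_jd_j$ in the even cell-rows; hence $\prod_c\Delta(c)=\prod_{j=1}^n(2a_jb_j)^{\lceil n/2\rceil}(2c_jd_j)^{\lfloor n/2\rfloor}$. For $AD_{n-1}(\wt_{\de(A)})$ the number $\Delta_j$ is exactly the cell-factor of every cell in cell-column $j$ (the ``wrap'' entries of $\de(A)$ are never used, since $j$ runs only up to $n-1$), so $\prod_c\Delta(c)=\prod_{j=1}^{n-1}\Delta_j^{\,n-1}$. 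Two applications of Theorem~\ref{reduction} then give
\[
\M(AD_n(\wt_A))=\Big[\prod_{j=1}^n(2a_jb_j)^{\lceil n/2\rceil}(2c_jd_j)^{\lfloor n/2\rfloor}\Big]\Big[\prod_{j=1}^{n-1}\Delta_j^{\,n-1}\Big]\,\M\big(AD_{n-2}(\wt_{\de^{(2)}(A)})\big).
\]
The base cases $n=0$ (where $\M=1$) and $n=1$ (a single cell, $\M=2a_1b_1$) match \eqref{ge}--\eqref{go} by inspection, and for $n\ge2$ I would substitute the inductive hypothesis for $\M(AD_{n-2}(\wt_{\de^{(2)}(A)}))$ written in the parameters $\widetilde a_j,\widetilde b_j,\widetilde c_j,\widetilde d_j$; replacing each factor $\widetilde d_i\widetilde a_{i+1}+\widetilde b_i\widetilde c_{i+1}$ by $1/\Delta_{i+1}$ via Step~1, one checks that every $\Delta_j$ cancels and that the surviving powers of $2$, the monomials in $a_i,b_i,c_i,d_i$, and the bilinear factors $d_ia_{i+1}+b_ic_{i+1}$ recombine into the right-hand side of \eqref{ge} (resp.\ \eqref{go}).

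\textbf{Main obstacle.} The genuine work is this last recombination. The shifts $j\mapsto j+1,\ j+2$ hidden inside $\widetilde a_j,\dots,\widetilde d_j$, the reflection $i\mapsto n-i+1$ in the exponents of the target formula, and the contraction of all index ranges when $n$ drops to $n-2$ must be reconciled, and the two parities $n=2k$ and $n=2k+1$ behave slightly differently and have to be handled separately. I expect the most reliable route is to compare, on the two sides of the claimed identity, the exponent of each single variable $a_i$ (and likewise $b_i,c_i,d_i$) and the exponent of each factor $d_ia_{i+1}+b_ic_{i+1}$, rather than manipulating the products wholesale; one can also shorten the algebra by first stripping common scalars off the weight pattern using Lemmas~\ref{mulstar} and~\ref{matrixstar}, at the cost of carrying along the accumulated scalar powers.
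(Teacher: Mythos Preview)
Your proposal is correct: the computations of $\de(A)$ and $\de^{(2)}(A)$ match the paper's, your key identity $\widetilde d_i\widetilde a_{i+1}+\widetilde b_i\widetilde c_{i+1}=1/\Delta_{i+1}$ is right, and the induction you outline does close. Your route, however, differs from the paper's in a noteworthy way.

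The paper also applies Theorem~\ref{reduction} twice to reach $AD_{n-2}(\wt_{\de^{(2)}(A)})$, but then, instead of invoking an inductive hypothesis in the transformed parameters $\widetilde a_j,\widetilde b_j,\widetilde c_j,\widetilde d_j$, it immediately cleans up $\de^{(2)}(A)$ via Lemma~\ref{mulstar}(a): multiplying the $i$-th column-part by $\Delta_i/(2\Theta_i)$ strips out all the $\Delta$'s and $\Theta$'s and leaves the simple reciprocal pattern $B$ with entries $1/b_1,1/a_3,1/b_2,1/a_4,\dots$ in rows~$1$--$2$ and $1/d_1,1/c_3,\dots$ in rows~$3$--$4$. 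Applying the same two-step reduction to $B$ then produces a matrix $C$ that is literally $A$ with its first four and last four columns removed. The net effect is a clean four-step recurrence $\M(AD_n(\wt_A))/\M(AD_{n-4}(\wt_{\phi(A)}))$ in which the weight pattern never changes shape and the index set simply shrinks at both ends; the closed forms \eqref{ge}--\eqref{go} then follow by unrolling this recurrence.

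What each approach buys: your direct induction is conceptually uniform and avoids invoking Lemma~\ref{mulstar}, but the ``main obstacle'' you flag is real---tracking the index shifts $j\mapsto j+1,j+2$ through the nested products, together with the reflection $i\mapsto n-i+1$, is a genuine bookkeeping burden that must be redone separately for each parity of $n$. The paper's detour through Lemma~\ref{mulstar} costs one extra equality per step but pays for itself by reducing the whole argument to iterating a single explicit ratio formula whose index pattern is transparent (first and last pairs of parameters peel off), so the final products can be read off without any exponent chase.
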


\begin{proof}
One easily sees that
\[\de(A)=
\begin{bmatrix}
1/(2b_1)&1/(2a_2)&1/(2b_2)&1/(2a_3)&\dotsc&1/(2b_n)&1/(2a_1)\\
1/(2d_1)&1/(2c_2)&1/(2d_2)&1/(2a_3)&\dotsc&1/(2d_n)&1/(2c1)\\
1/(2d_1)&1/(2c_2)&1/(2d_2)&1/(2a_3)&\dotsc&1/(2d_n)&1/(2c1)\\
1/(2b_1)&1/(2a_2)&1/(2b_2)&1/(2a_3)&\dotsc&1/(2b_n)&1/(2a_1)
\end{bmatrix},
\]
and
\[
\de^{(2)}(A)=
\begin{bmatrix}
\frac{2\Theta_1}{b_1\Delta_1}&\frac{2\Theta_2}{a_3\Delta_2}&\frac{2\Theta_2}{b_2\Delta_2}&\dotsc&\frac{2\Theta_n}{a_1\Delta_n}&\frac{2\Theta_n}{b_n\Delta_n}&\frac{2\Theta_1}{a_2\Delta_1}\\
\frac{2\Theta_1}{b_1\Delta_1}&\frac{2\Theta_2}{a_3\Delta_2}&\frac{2\Theta_2}{b_2\Delta_2}&\dotsc&\frac{2\Theta_n}{a_1\Delta_n}&\frac{2\Theta_n}{b_n\Delta_n}&\frac{2\Theta_1}{a_2\Delta_1}\\
\frac{2\Theta_1}{d_1\Delta_1}&\frac{2\Theta_2}{c_3\Delta_2}&\frac{2\Theta_2}{d_2\Delta_2}&\dotsc&\frac{2\Theta_n}{c_1\Delta_n}&\frac{2\Theta_n}{d_n\Delta_n}&\frac{2\Theta_1}{c_2\Delta_1}\\
\frac{2\Theta_1}{d_1\Delta_1}&\frac{2\Theta_2}{c_3\Delta_2}&\frac{2\Theta_2}{d_2\Delta_2}&\dotsc&\frac{2\Theta_n}{c_1\Delta_n}&\frac{2\Theta_n}{d_n\Delta_n}&\frac{2\Theta_1}{c_2\Delta_1}
\end{bmatrix},
\]
where $\Delta_i=a_{i+1}d_i+b_ic_{i+1}$ and $\Theta_i=a_{i+1}b_ic_{i+1}d_i$, for $i=1,2,\dots,n$ (the subscripts here are interpreted modulo $n$).

\medskip
(a) Suppose that $n=2k$, for some positive integer $k$. The cells in the $i$-th column of the weighted Aztec diamond $AD_n(\wt_A)$ have cell-factors either $2a_ib_i$ or $2c_id_i$, for $i=1,2,\dots,n$.
Apply Reduction Theorem \ref{reduction}, we have
\begin{equation}\label{eq1}
\M(AD_n(\wt_A))=\M(AD_{n-1}(\wt_{\de(A)})) \prod_{i=1}^n(2a_ib_i)^k(2c_id_i)^k.
\end{equation}
The cells in the $i$-th column of the weighted Aztec diamond $AD_{n-1}(\wt_{d(A)})$ have cell-factors $\frac{\Delta_i}{4\Theta_i}$, for $i=1,2,\dots,n-1$. Thus, by the Reduction Theorem again, we obtain
\begin{equation}\label{eq2}
\M(AD_{n-1}(\wt_{\de(A)}))= \M(AD_{n-2}(\wt_{\de^{(2)}(A)})) \prod_{i=1}^{n-1}\left(\frac{\Delta_i}{4\Theta_i}\right)^{n-1}.
\end{equation}
Divide the weight matrix of the Aztec diamond $AD_{n-2}(\wt_{\de^{(2)}(A)})$  into $n-1$ parts consisting of columns as in Lemma \ref{mulstar}(a). Multiply all entries in the $i$-th part (from left to right)
 by $\frac{\Delta_i}{2\Theta_i}$, for $i=1,2,\dots,n-1$. We get the weight matrix of $AD_{n-2}(\wt_{B})$,
  where $B$ is the $4\times 2(n-2)$ matrix defined by
\[B:=
\begin{bmatrix}
1/b_1&1/a_3&1/b_2&1/a_4&\dots&1/b_{n-2}&1/a_n\\
1/b_1&1/a_3&1/b_2&1/a_4&\dots&1/b_{n-2}&1/a_n\\
1/d_1&1/c_3&1/d_2&1/c_4&\dots&1/d_{n-2}&1/c_n\\
1/d_1&1/c_3&1/d_2&1/c_4&\dots&1/d_{n-2}&1/c_n
\end{bmatrix}.
\]
Therefore, Lemma \ref{mulstar}(a) implies
\begin{equation}\label{eq3}
\M(AD_{n-2}(\wt_{\de^{(2)}(A)}))= \M(AD_{n-2}(\wt_{B}))\prod_{i=1}^{n-1}\left(\frac{2\Theta_i}{\Delta_i}\right)^{n-2}.
\end{equation}
We get the following recurrence by applying three equalities (\ref{eq1}), (\ref{eq2}) and (\ref{eq3})
\begin{equation}\label{eq4}
\frac{\M(AD_n(\wt_{A}))}{\M(AD_{n-2}(\wt_{B}))}=2^{n}\prod_{i=1}^na_i^kb_i^kc_i^kd_i^k\cdot\prod_{i=1}^{n-1}\frac{\Delta_i}{\Theta_i}.
\end{equation}

Next, we apply the recurrence (\ref{eq4}) to the weighted Aztec diamond $AD_{n-2}(\wt_{B})$ and obtain
\begin{align}\label{eq5}
\frac{\M(AD_{n-2}(\wt_{B}))}{\M(AD_{n-4}(\wt_{C}))}&=2^{n-2}\prod_{i=1}^{n-2}\left(\frac{1}{b_ia_{i+2}d_ic_{i+2}}\right)^{k-1}
\cdot\prod_{i=1}^{n-3}\left(\frac{\frac{1}{c_{i+2}b_{i+1}}+\frac{1}{a_{i+2}d_{i+1}}}{\frac{1}{b_{i+1}a_{i+2}d_{i+1}c_{i+2}}}\right)\notag\\ &=2^{n-2}\prod_{i=1}^{n-2}\left(\frac{1}{b_ia_{i+2}d_ic_{i+2}}\right)^{k-1}\cdot\prod_{i=2}^{n-2}\Delta_i,
\end{align}
where
\[
C=
\begin{bmatrix}
a_3&b_3&a_4&b_4&\dots&a_{n-2}&b_{n-2}\\
a_3&b_3&a_4&b_4&\dots&a_{n-2}&b_{n-2}\\
c_3&d_3&c_4&d_4&\dots&c_{n-2}&d_{n-2}\\
c_3&d_3&c_4&d_4&\dots&c_{n-2}&d_{n-2}
\end{bmatrix},
\]
i.e. $C$ is obtained from the matrix $A$ by removing the first and the last four columns.
Two equalities (\ref{eq4}) and (\ref{eq5}) imply
\begin{align}\label{eq6}
\frac{\M(AD_n(\wt_{A}))}{\M(AD_{n-4}(\wt_{C}))}=&2^{2n-2}(a_1b_nc_1d_n)^{k}(a_2b_{n-1}c_{2}d_{n-1})^{k-1}\notag\\
&\times\prod_{i=1}^{n-1}\Delta_i\cdot\prod_{i=2}^{n-2}\Delta_i.
\end{align}

We now consider an operator $\phi$ defined as follows. Let $N$ be an $m\times n$ matrix with $n\geq8$, then $\phi(N)$ is the $m\times (n-8)$ matrix obtained from $N$ by removing its first four columns and its last four columns. In particular, $\phi(A)=C$.

If $k=2q$, we apply the recurrence (\ref{eq6}) $q$ times and obtain
\begin{equation}\label{g1}
\M(AD_{4q}(\wt_{A}))=2^{2q(2q+1)}\prod_{i=1}^{k}(a_ib_{n-i+1}c_id_{n-i+1})^{k-i+1}\cdot\prod_{j=1}^{k}\prod_{i=j}^{n-j}\Delta_i.
\end{equation}
In the case $k=2q+1$, we apply also the recurrence (\ref{eq6}) $q$ times and get
\begin{equation}
\frac{\M(AD_{4q+2}(\wt_{A}))}{\M(AD_{2}(\wt_{\phi^{(q)}(A)}))}=2^{2q(2q+3)}\prod_{i=1}^{k-1}(a_ib_{n-i+1}c_id_{n-i+1})^{k-i+1}
\cdot\prod_{j=1}^{k-1}\prod_{i=j}^{n-j}\Delta_i,
\end{equation}
where
\[\phi^{(q)}(A)=\begin{bmatrix}
a_k&b_k&a_{k+1}&b_{k+1}\\
a_k&b_k&a_{k+1}&b_{k+1}\\
c_k&d_k&c_{k+1}&d_{k+1}\\
c_k&d_k&c_{k+1}&d_{k+1}
\end{bmatrix}.
\]
Moreover, by Reduction Theorem, we get easily that
\begin{equation}
\M(AD_{2}(\wt_{\phi^{(q)}(A)}))=2^2(a_kb_{k+1}c_kd_{k+1})(a_{k+1}d_k+b_{k}c_{k+1}).
\end{equation}
Therefore,
\begin{equation}\label{g2}
\M(AD_{4q+2}(\wt_{A}))=2^{2q(2q+3)+2}\prod_{i=1}^{k}(a_ib_{n-i+1}c_id_{n-i+1})^{k-i+1}\cdot\prod_{j=1}^{k}\prod_{i=j}^{n-j}\Delta_i.
\end{equation}
Finally, the equalities (\ref{g1}) and (\ref{g2}) yield (\ref{ge}).

\medskip
(b) Suppose that $n=2k+1$, for some nonnegative integer $k$. This case can be treated similarly to the case of even $n$. Two equations (\ref{eq2}) and (\ref{eq3}) are also true in this case. The exponents of $2a_ib_i$ and $2c_id_i$ in (\ref{eq1}) are now $k+1$ and $k$, respectively (as opposed to both being $k$ when $n=2k$). Thus, we have
\begin{equation}\label{eq1n}
\M(AD_n(\wt_A))=\M(AD_{n-1}(\wt_{\de(A)}))\,\prod_{i=1}^n(2a_ib_i)^{k+1}(2c_id_i)^k.
\end{equation}
By (\ref{eq1n}), (\ref{eq2}) and (\ref{eq3}), we get the following equation (instead of (\ref{eq4}) when $n=2k$)
\begin{equation}\label{eq4n}
\frac{\M(AD_n(\wt_{A}))}{\M(AD_{n-2}(\wt_{B}))}=2^{n}\prod_{i=1}^na_i^{k+1}b_i^{k+1}c_i^kd_i^k\cdot\prod_{i=1}^{n-1}\frac{\Delta_i}{\Theta_i}.
\end{equation}
We apply (\ref{eq4n}) twice and obtain an equality (instead of (\ref{eq6}) when $n=2k$) as follows.
\begin{align}\label{eq6n}
\frac{\M(AD_n(\wt_{A}))}{\M(AD_{n-4}(\wt_{\phi(A)}))}=&2^{2n-2}(a_1b_n)^{k+1}(a_2b_{n-1})^k(c_1d_n)^{k}(c_2d_{n-1})^{k-1}\notag\\
&\times\prod_{i=1}^{n-1}\Delta_i\cdot\prod_{i=2}^{n-2}\Delta_i.
\end{align}
Finally, we get (\ref{go}) by applying repeatedly  (\ref{eq6n}).
  \end{proof}

\begin{remark}
Ciucu \cite{Ciucu5} considered a similar multi-parameter weight pattern
\begin{equation}A'=
\begin{bmatrix}
x_1&x_2&\dotsc&x_{2n-1}&x_{2n}\\
y_1&y_2&\dotsc&y_{2n-1}&y_{2n}\\
y_1&y_2&\dotsc&y_{2n-1}&y_{2n}\\
x_1&x_2&\dotsc&x_{2n-1}&x_{2n}
\end{bmatrix},
\notag\end{equation}
and got two recurrences similar to (\ref{eq4}) and (\ref{eq4n}) (see Theorem 5.1 in \cite{Ciucu5}). However, he did not give an explicit product formula for the matching generating function of $AD_n(\wt_{A'})$.
\end{remark}

Suppose $d_1,$ $d_2,$ $\dotsc,$ $d_m$ are positive integers, whose sum is $\sum_{i=1}^md_i=n$. Consider a $4\times 2n$ weight pattern $D_{a,b}:=D_{a,b}(d_1,\dotsc,d_m)$ consisting of
 $m$ blocks $D_i$ of size $4\times 2d_{i}$ from left to right, for $i=1,\dotsc,m$, where blocks $D_i$'s are defined by setting
\[D_{2j-1}:=\begin{bmatrix}
a&a&a&\dots&a&a&a\\
a&a&a&\dots&a&a&a\\
b&b&b&\dots&b&b&b\\
b&b&b&\dots&b&b&b\end{bmatrix}, \text{  and  }
D_{2j}:=\begin{bmatrix}
b&b&b&\dots&b&b&b\\
b&b&b&\dots&b&b&b\\
a&a&a&\dots&a&a&a\\
a&a&a&\dots&a&a&a
\end{bmatrix}.\]

\begin{corollary}\label{fortressw}
Assume that $S$ is defined as in Theorem \ref{main}.

(a) If $n=2k$, then
\begin{equation}\label{ecor}
\M(AD_n(\wt_{D_{a,b}}))=(2ab)^{k(2k+1)-S}(a^2+b^2)^{S}.
\end{equation}

(b) If $n=2k+1$, then
\begin{align}\label{ocor}
\M(AD_n(\wt_{D_{a,b}}))&=\beta\cdot2^{(2k+1)(k+1)-S}a^{2k(k+1)+\theta-S}\notag\\
&\times b^{2k(k+1)+(2k+1)-\theta-S}(a^2+b^2)^{S},
\end{align}
where $\theta=\sum_{odd} d_i$, taken over all odd indices $i\in\{1,2,\dots,m\}$,\\
and where
\begin{equation}
\beta= \begin{cases}
a & \text{if $s_{2j}<k+1\leq s_{2j+1}$ for some $j$;}\\
b&\text{ otherwise.}
\end{cases}
\notag\end{equation}
\end{corollary}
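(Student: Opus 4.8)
The plan is to obtain Corollary~\ref{fortressw} as a direct specialization of Theorem~\ref{weighted}, so the only real content is evaluating the product formulas (\ref{ge}) and (\ref{go}) at the weights coming from $D_{a,b}$. First I would set up the dictionary between $D_{a,b}$ and the four-row pattern $A$ of Theorem~\ref{weighted}. Grouping the $2n$ columns of $A$ into $n$ consecutive pairs --- pair $q$ carrying the top value $a_q$ in its odd column and $b_q$ in its even column, and the bottom values $c_q,d_q$ similarly --- one checks that a pair lying inside an odd-indexed block $D_{2j-1}$ forces $a_q=b_q=a$ and $c_q=d_q=b$, while a pair inside an even-indexed block $D_{2j}$ forces $a_q=b_q=b$ and $c_q=d_q=a$. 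In this pair-indexing the $j$-th block occupies pairs $s_{j-1}+1,\dots,s_j$, so the $m-1$ interior block boundaries sit between pairs $s_j$ and $s_j+1$ for $j=1,\dots,m-1$.

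Next I would record a handful of elementary evaluations. For every index $i$ one has $a_ic_i=b_id_i=ab$, hence $a_ib_ic_id_i=a^2b^2$ and $a_ib_{n-i+1}c_id_{n-i+1}=(ab)^2$; moreover $\Theta_i=a_{i+1}b_ic_{i+1}d_i=a^2b^2$ always, while $\Delta_i=a_{i+1}d_i+b_ic_{i+1}$ equals $2ab$ when pairs $i$ and $i+1$ belong to the same block and equals $a^2+b^2$ exactly when $i\in\{s_1,\dots,s_{m-1}\}$. Then I would note that in the double product $\prod_{j=1}^{k}\prod_{i=j}^{n-j}\Delta_i$ the factor $\Delta_i$ occurs with multiplicity $\min(i,n-i,k)=\min(i,n-i)$ (the last equality because $n=2k$ or $2k+1$), so the product equals $(a^2+b^2)^{\sum_{j=1}^{m-1}\min(s_j,\,n-s_j)}\,(2ab)^{\,T-S}$ with $T:=\sum_{i=1}^{n-1}\min(i,n-i)$, and $T=k^2$ for $n=2k$, $T=k(k+1)$ for $n=2k+1$; since $s_m=n$ kills the $j=m$ term of $S$, the first exponent is exactly $S$. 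For part (a) this, together with $\prod_{i=1}^{k}(a_ib_{n-i+1}c_id_{n-i+1})^{k-i+1}=(ab)^{k(k+1)}$, collecting the powers of $2$, $a$, $b$ in (\ref{ge}), and the identity $k(k+1)+k^2=k(2k+1)$, gives (\ref{ecor}) at once.

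For part (b) the extra work is the asymmetric monomial $\prod_{i=1}^{k+1}(a_ib_{n-i+1})^{k-i+2}(c_id_{n-i+1})^{k-i+1}$, which I would factor as $\prod_{i=1}^{k+1}(a_ib_{n-i+1}c_id_{n-i+1})^{k-i+1}\cdot\prod_{i=1}^{k+1}a_ib_{n-i+1}=(ab)^{k(k+1)}\bigl(\prod_{i=1}^{k+1}a_i\bigr)\bigl(\prod_{\ell=k+1}^{2k+1}b_\ell\bigr)$. Since $a_q$ (and likewise $b_\ell$) equals $a$ precisely when the corresponding pair lies in an odd-indexed block, and there are $\theta=\sum_{\text{odd }i}d_i$ such pairs in all, this last product equals $\beta\,a^{\theta}b^{2k+1-\theta}$, where the leftover $\beta$ is $a$ or $b$ according to whether the central pair $k+1$ lies in an odd- or an even-indexed block --- exactly the case split $s_{2j}<k+1\le s_{2j+1}$ in the statement. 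Substituting this and the $\Delta_i$-product into (\ref{go}) and simplifying the exponents (using $(k+1)^2+k(k+1)=(2k+1)(k+1)$) yields (\ref{ocor}). I expect this last bookkeeping to be the one delicate point: correctly translating raw column positions into pair-indices, matching the $m-1$ interior boundaries with $s_1,\dots,s_{m-1}$, and pinning down which block holds the central pair so that $\beta$ comes out right; everything else is routine exponent arithmetic.
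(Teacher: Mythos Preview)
Your proposal is correct and follows essentially the same approach as the paper: both treat the corollary as the specialization of Theorem~\ref{weighted} obtained by setting $a_i=b_i=a,\ c_i=d_i=b$ on odd-indexed blocks and swapping on even-indexed ones, then evaluate the $\Delta_i$'s and collect exponents. The only cosmetic difference is that the paper computes the exponent of $a^2+b^2$ by interchanging the two summations (via the counting function $\Psi(j)$), whereas you compute the multiplicity $\min(i,n-i)$ of each $\Delta_i$ directly; the paper also leaves part~(b) to the reader, while you actually carry out the factorization that isolates $\beta$ and $\theta$.
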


\begin{proof}
This is a special case of Theorem \ref{weighted} when $a_i=b_i=a$ and $c_i=d_i=b$ if $s_{2j}< i\leq s_{2j+1}$, for some $0\leq j \leq (m-1)/2$; and $a_i=b_i=b$ and $c_i=d_i=a$ otherwise.

\medskip
(a) If $n=2k$, then from Theorem \ref{weighted}(a)
\begin{equation}\label{eapply}
\M(AD_n(\wt_{D_{a,b}}))=2^{k(k+1)}\prod_{i=1}^{k}(a^2b^2)^{k-i+1}\prod_{j=1}^{k}\prod_{i=j}^{n-j}\Delta_i,
\end{equation}
because $a_ic_i=b_id_i=ab$, for $i=1,2,\dots,n$. One can check that
\begin{equation}
\Delta_i=
\begin{cases}
a^2+b^2 &\text{if $i=s_t$, for some $1\leq t\leq m$;}\\
2ab &\text{otherwise.}
\end{cases}
\notag\end{equation}
Therefore, we get
\begin{equation}
\prod_{j=1}^{k}\prod_{i=j}^{n-j}\Delta_i=(2ab)^{k^2-\sum_{j=1}^{k}\Psi(j)}(a^2+b^2)^{\sum_{j=1}^{k}\Psi(j)},
\end{equation}
where $\Psi(j)$ is the number of indices $i$ so that $j\leq s_i\leq n-j$. Moreover,
\begin{align}
\sum_{j=1}^{k}\Psi(j)&=\sum_{j=1}^k\sum_{i=1}^{m}\mathbf{1}_{\{j\leq s_{i}\leq n-j\}}(i)=\sum_{i=1}^{m}\sum_{j=1}^k\mathbf{1}_{\{j\leq s_{i}\leq n-j\}}(i)\notag\\
&=\sum_{i=1}^{m}\min(s_{i},n-s_{i})=S,
\end{align}
where ${\textbf{1}}_{\{j\leq s_{i}\leq n-j\}}(i)$ is the indicator function of the event $\{j\leq s_{i}\leq n-j\}$; i.e. it is $1$ if $j\leq s_{i}\leq n-j$, and is $0$ otherwise. This implies (\ref{ecor}).

(b) Assume that $n=2k+1$. We can get (\ref{ocor}) from Theorem \ref{weighted}(b) by arguing similarly to part (a).
  \end{proof}

\medskip

The next two families of graphs will play the key role in investigating the structure of the dual graph of a generalized fortress.

A \textit{regular city} of order $k$ is a row of $k$ adjacent diamonds. An \textit{extended city} of order $k$ is a regular city of order $k$ with two horizontal and $2k$ vertical extended edges (see Figure \ref{Fig9} for examples).

\begin{figure}\centering
\includegraphics[width=0.75\textwidth]{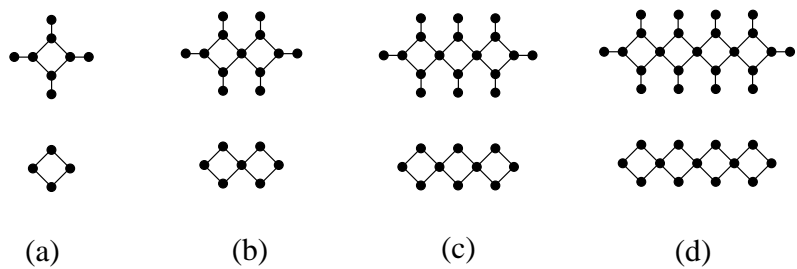}
\caption{Cities of order 1, 2, 3 and 4 (from the left). The extended cities are on the upper row, and normal cities are on the lower row.}
\label{Fig9}
\end{figure}

\begin{figure}\centering
\includegraphics[width=0.75\textwidth]{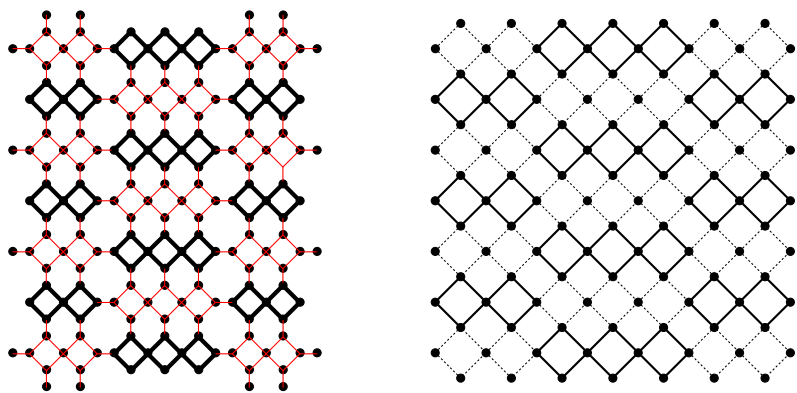}
\caption{The graph $G(2,3,2)$ after rotated $45^o$ (left), and  the graph $AD(2,3,2)$ (right). The dotted edges have weight $1/2$.}
\label{Fig10}
\end{figure}

Denote  by $G(d_1,\dotsc,d_m)$ the dual graph of $F(d_1,\dotsc,d_m)$. The graph $G(d_1,\dotsc,d_m)$ (after rotated $45^o$) consists of $mn$ cities. Precisely, it consists of $n$ rows, and each row has $m$ cities. Regular and extended cities of orders $d_1,d_2,\dotsc, d_m$ appear alternatively on
each row from left to right. All odd rows (ordered from the top)  start with an extended city on the left, and all even rows start with a regular city (illustrated by the left picture in Figure \ref{Fig10}). Denote by $\overline{G}(d_1,\dotsc,d_m)$ the dual graph of the region  $\overline{F}(d_1,\dotsc,d_m)$.
The structures of $\overline{G}(d_1,\dotsc,d_m)$ and $G(d_1,\dots,d_m)$ are (almost) the same, the only difference is that the odd rows now start by a regular city, and the even rows now start by an extended city.

We have the following subgraph replacement similar to urban renewal (see Lemma \ref{spider}(a)).
\begin{figure}\centering
\includegraphics[width=0.75\textwidth]{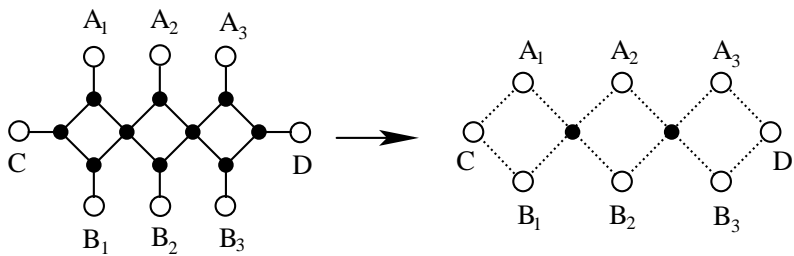}
\caption{The replacement in Lemma \ref{trans} for the cities of order 3. Dotted edges have weight $1/(2x)$.}
\label{Fig11}
\end{figure}

\begin{lemma}\label{trans}
If graph $G$ has a subgraph $K$ isomorphic to an extended city of order $k$ whose edges have weight $x>0$. Assume that only the $2k+2$ endpoints of extended edges of $K$ may have neighbors outside $K$ (illustrated by white vertices of the left graph in Figure \ref{Fig11}, for $k=3$). Let $G'$ be the graph obtained from $G$ by replacing
the extended city $K$ by a regular city of  order $k$ whose edges are weighted by $1/(2x)$ (see the right graph in Figure \ref{Fig11}; $G'$ has $k-1$ new black vertices that were not in $G$). Then $\M(G)=(2x^2)^{k}\M(G')$.
\end{lemma}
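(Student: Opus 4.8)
The plan is to reduce the extended city of order $k$ to a regular city of order $k$ by repeatedly applying the urban-renewal-type moves of Lemma \ref{spider}, interleaved with the edge-weight bookkeeping provided by the Star Lemma \ref{star} and the Edge-replacing Lemma \ref{ER}. Since the $2k+2$ endpoints of the extended edges are the only vertices of $K$ that may be attached to the rest of $G$, every manipulation below takes place entirely inside $K$ and is therefore legitimate regardless of what $G$ looks like outside. First I would set up notation: label the diamonds of the regular-city core $c_1,\dots,c_k$ from left to right, so that $K$ has $k$ four-cycles (each with all four edges of weight $x$), two horizontal extended edges sticking out of the left and right ends, and $2k$ vertical extended edges sticking out of the top and bottom, again all of weight $x$.

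The key step is to apply the Spider Lemma \ref{spider}(a) to each of the $k$ cells. For a single cell with all four edges weighted $x$, the local replacement $K\to\overline K$ of Figure \ref{Fig3} produces a new $4$-cycle whose four edges carry weight $1/(2x)$ (since each new weight is of the form $x/(x\cdot x + x\cdot x) = 1/(2x)$), and contributes a multiplicative factor $xz+yt = x^2+x^2 = 2x^2$. Doing this once per cell contributes $(2x^2)^k$ in total, matching the claimed factor. After these $k$ replacements, however, the picture is not yet literally a regular city: the spider move splits each of the original corner vertices into two, so adjacent cells that formerly shared a vertex now have a pair of vertices joined by the newly created "dashed" edges, and the extended edges have been absorbed into these local gadgets. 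The remaining work is purely cosmetic cleanup: use the Star Lemma \ref{star} to rescale the weights inside the gadget from $1/(2x)$ to the desired $1/(2x)$ presentation while absorbing any spurious scalars (choosing $t$'s whose product is $1$ so no extra factor appears), and use the Edge-replacing Lemma \ref{ER} (and, where two new edges meet at a degree-two vertex, the contraction that merges them, which is just a forced-edge / vertex-splitting argument) to consolidate parallel or series edges so that the graph becomes exactly a regular city of order $k$ with all edges weighted $1/(2x)$. Tracking the $k-1$ interior black vertices that appear in the target graph but not in $G$ is exactly what the spider moves supply, since each adjacent pair of cells contributes one such merged vertex.

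The main obstacle I expect is the boundary bookkeeping at the two horizontal extended edges and the alternating pattern of the $2k$ vertical ones: one must check that after the $k$ spider moves the half-edges reassemble into precisely the regular-city adjacency structure (with the correct identification of which new vertex the external neighbors attach to), and that no stray weight is left over. This is most cleanly handled by doing the $k=1$ case in full detail — there the extended city is a single diamond with six dangling edges, and one spider move plus one application of the Star Lemma gives the regular diamond of order $1$ with edge weights $1/(2x)$ and the factor $2x^2$ — and then observing that cells $c_2,\dots,c_k$ are glued to $c_1$ along single vertices, so the same local move propagates without interference. I would present the $k=1$ computation explicitly and then say "the general case follows by applying this replacement to each of the $k$ cells in turn, the factors multiplying to $(2x^2)^k$, and cleaning up the interface edges with Lemmas \ref{ER} and \ref{star}," which is the standard level of detail for such urban-renewal arguments in this literature.
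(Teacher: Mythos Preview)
Your main idea---apply the Spider Lemma once per diamond to pick up the factor $(2x^2)^k$ and end up with edges of weight $1/(2x)$---is exactly the right one, and matches the paper. But there is a genuine order-of-operations gap. Lemma~\ref{spider}(a) requires that the four inner vertices of the cell have \emph{no neighbors outside} the local subgraph $K$ (the $4$-cycle plus its four legs). In an extended city, each of the $k-1$ vertices shared by two consecutive diamonds has degree~$4$ and no leg at all, so the hypotheses of the Spider Lemma are not satisfied for any interior cell. You cannot ``apply urban renewal and then clean up''; the lemma simply does not apply to $c_i$ as it sits inside $K$.

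The paper's fix is a one-liner that also eliminates all of your post-processing: first apply the Vertex-splitting Lemma~\ref{VS} at each of the $k-1$ shared vertices, turning the shared vertex into a path $v'\!-\!x\!-\!v''$. Now every diamond has four inner vertices of degree~$3$, each with a genuine leg (either an original extended edge or one of the new edges to some $x$), and Spider Lemma~\ref{spider}(a) applies to all $k$ cells with no interference. After the $k$ spider moves the outer vertices are exactly the $2k+2$ extended-edge endpoints together with the $k-1$ new vertices $x$, and the resulting graph \emph{is} the regular city of order $k$ with edge weights $1/(2x)$---no Star Lemma, no Edge-replacing Lemma, no rescaling needed. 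Your proposed cleanup via Lemmas~\ref{star} and~\ref{ER} is both unnecessary and, as written, vague about which scalars cancel; once you put the vertex-splitting first, the whole argument is two sentences.
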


\begin{figure}\centering
\includegraphics[width=0.75\textwidth]{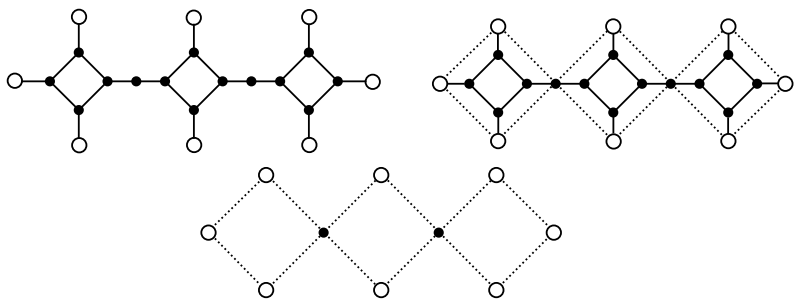}
\caption{Illustrating the proof of Lemma \ref{trans}. The weights of dotted edges are equal to $1/(2x)$.}
\label{Fig12}
\end{figure}

\begin{proof}
   First, apply Vertex-splitting Lemma \ref{VS} at $k-1$ vertices of $K$ that belong to two diamonds, see the left picture in Figure \ref{Fig12}.
Apply Spider Lemma \ref{spider} to $k$  diamond cells in the resulting graph, we get $G'$. By Lemmas \ref{VS} and \ref{spider}, $\M(G)=(2x^2)^{k}\M(G')$.
  \end{proof}

\begin{proof}[Proof of Theorem \ref{main}]
Apply the replacement in Lemma \ref{trans} to each extended city in the graph $G(d_1,\dotsc,d_m)$, we replace each of them by a regular city of the same order whose edges are weighted by 1/2. The resulting graph is isomorphic to the weighted Aztec diamond
\[AD(d_1,\dotsc,d_m):=AD_n(\wt_{D_{\frac{1}{2},1}}),\]
where $D_{\frac{1}{2},1}:=D_{\frac{1}{2},1}(d_1,\dotsc,d_m)$ is defined as in Corollary \ref{fortressw} (see the right picture in Figure \ref{Fig10}), and
\begin{equation}\label{eqC}
\M(G(d_1,\dotsc,d_m))=2^{\mathcal{C}}\,\M(AD(d_1,\dotsc,d_m))
\end{equation}
 where $\mathcal{C}$ is the sum of the sizes of all extended cities in the graph $G(d_1,\dotsc,d_m)$. One readily sees that $\mathcal{C}$ is also the number of  cells of $AD(d_1,\dotsc,d_m)$ whose edges are weighted by $1/2$. Enumerate explicitly these cells we get
\begin{equation}\label{ratio}
\frac{\M(G(d_1,\dotsc,d_m))}{\M(AD(d_1,\dotsc,d_m))}=
\begin{cases}
2^{n^2/2} & \text{if $n$ is even};\\
2^{n(n-1)/2+\sum_{i=0}^{\lfloor(m-1)/2\rfloor}d_{2i+1}} &\text{if $n$ is odd.}
\end{cases}
\end{equation}
Similarly,
\begin{equation}\label{ratio2}
\frac{\M(\overline{G}(d_1,\dots,d_m))}{\M(\overline{AD}(d_1,\dots,d_m))}=
\begin{cases}
2^{n^2/2} & \text{if $n$ is even};\\
2^{n(n-1)/2+\sum_{i=1}^{\lfloor m/2\rfloor}d_{2i}} &\text{if $n$ is odd,}
\end{cases}
\end{equation}
where $\overline{AD}(d_1,\dotsc,d_m):=AD_n(\wt_{D_{1,\frac{1}{2}}})$ and where $D_{1,\frac{1}{2}}:=D_{1,\frac{1}{2}}(d_1,\dotsc,d_m)$ is defined as in the Corollary \ref{fortressw}.

We get (\ref{maineven}) from Corollary \ref{fortressw}(a) (for $a=1/2$ and $b=1$) and  (\ref{ratio}). We also get (\ref{mainodd1}) from  Corollary \ref{fortressw}(b) (for $a=1/2$ and $b=1$) and (\ref{ratio}). Finally, we deduce (\ref{mainodd2})  from  Corollary \ref{fortressw}(b) (for $a=1$ and $b=1/2$) and (\ref{ratio2}).
  \end{proof}

\begin{figure}\centering
\includegraphics[width=0.60\textwidth]{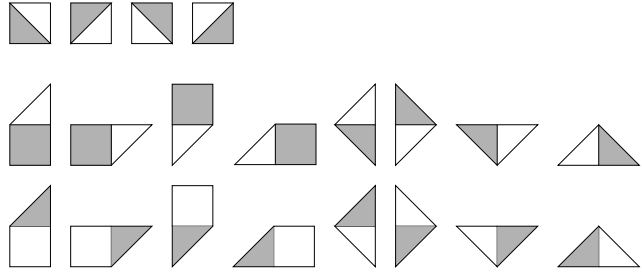}
\caption{All possible types of tiles in a generalized fortress.}
\label{Fig13}
\end{figure}

Similarly to perfect matchings, tilings are allowed to carry weights, in which the weight of a tiling is defined to be the product of the weights on its constituent tiles. The \textit{tiling generating function} of a region $R$, denoted by $\T(R)$, is the sum of the weights of all its tilings. For example, consider the following weight assignment to the tiles of the region $F(d_1,d_2,\dotsc,d_m)$. All  the tiles on the top row of Figure \ref{Fig13} are weighted by 1, all tiles on the middle row are weighted by $1/(2a)$, and all the tiles on the bottom row are weighted by $b$, for some positive numbers $a$ and $b$. Then apply the replacement in Lemma \ref{trans} to all extended cities in the dual graph of the region (an edge in the dual graph has the same weight as its corresponding tile in the region). We get a graph isomorphic to the weighted Aztec diamond $AD_n(\wt_{D_{a.b}})$, where $D_{a,b}$ is defined as in Corollary \ref{fortressw}. Thus,
\begin{equation}
\T(F(d_1,d_2,\dotsc,d_m))=\left(\frac{1}{2a^2}\right)^{\mathcal{C}}\M(AD_n(\wt_{D_{a,b}})),
\notag\end{equation}
where $\mathcal{C}$ is the sum of the sizes of extended cities in the dual graph of $F(d_1,d_2,\dotsc,d_m)$ (as in (\ref{eqC})). It implies that the tiling generating function of $F(d_1,d_2,\dotsc,d_m)$ is a product of several perfect powers.

\section{The square lattice with zigzag paths}
\label{sec4}

Consider the square lattice with horizontal zigzag paths drawn in (i.e., the bi-infinite paths consist of unit steps going alternatively southeast and northeast), so that the distances between any two consecutive ones are 2 (i.e., we draw them in every second horizontal strip of unit squares). We define next a new family of regions on that lattice as follows.

Let $A$ be a down-pointing vertex of some zigzag path.  Let $\ell$ be the vertical line on the right of $A$ so that the distance from $A$ to $\ell$ is $n\in \mathbb{Z}^+$. Start from $A$ we go periodically by unit steps with the period $\langle\uparrow, \rightarrow,\uparrow, \rightarrow,\nearrow, \rightarrow, \uparrow\rangle$ until reaching the line $\ell$, denote by $B$ the reaching point.  The described path is the northwestern boundary of the region. In the same fashion, we go from $A$  by unit steps with the period  $\langle\rightarrow,\downarrow, \downarrow, \rightarrow,\downarrow,\rightarrow,\searrow\rangle$ until reaching $\ell$, denote by $D$ the new reaching point. The latter path is the southwestern boundary of our region.  The northeastern and southeastern boundaries are obtained by reflecting the northwestern and southwestern boundaries about the line $\ell$, respectively. Denote by $Z_n$ the region bordered by four paths above ($Z_7$ is shown by the  left picture in Figure \ref{Fig14}).

\begin{figure}\centering
\includegraphics[width=0.85\textwidth]{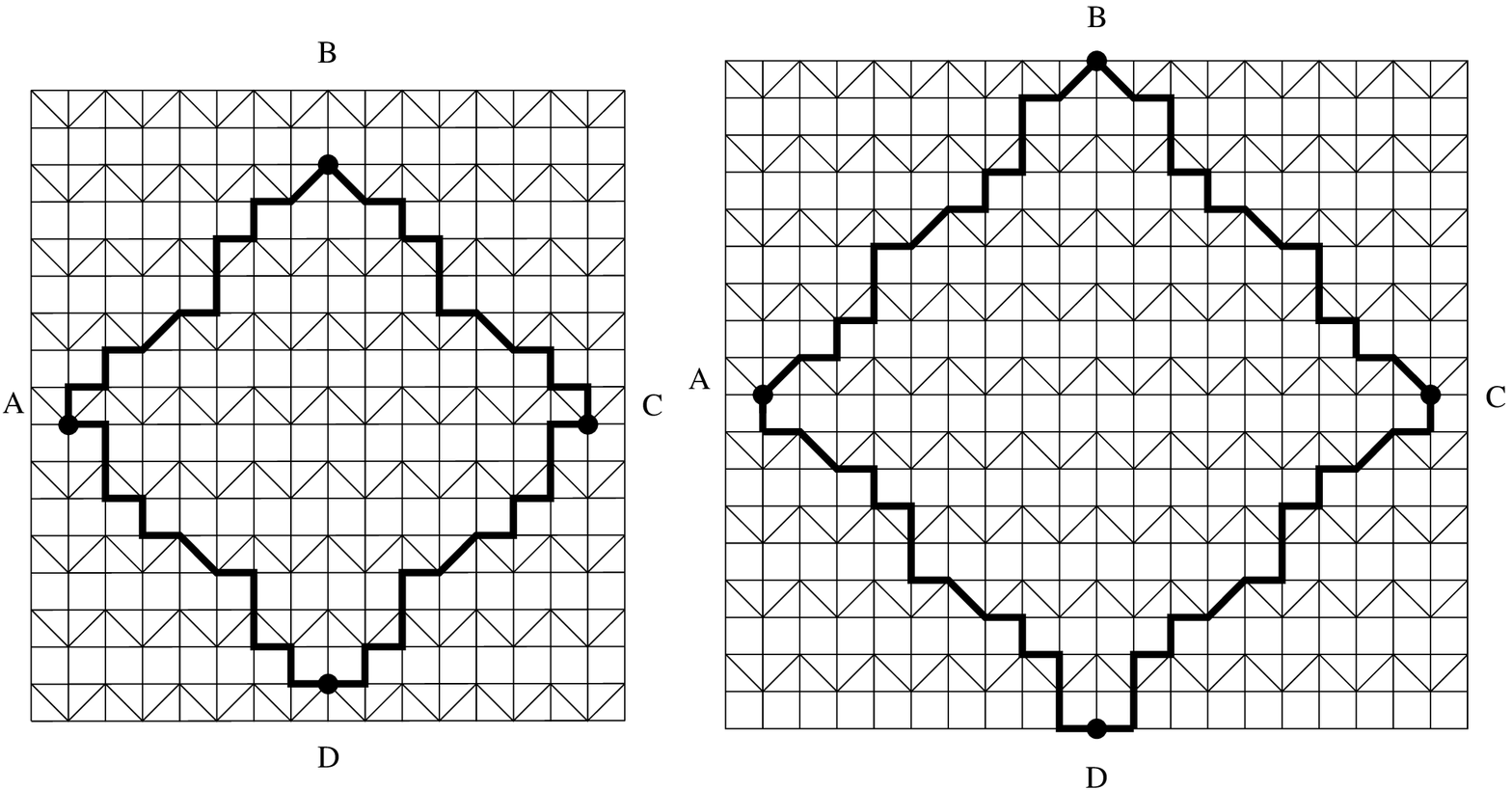}
\caption{Two regions: $Z_7$ (left) and $\overline{Z}_9$ (right).}
\label{Fig14}
\end{figure}

Consider a variant $\overline{Z}_n$ of $Z_n$ defined as follows. We still pick the vertex $A$ and the vertical line $\ell$ as in the definition of the region $Z_n$. The northwestern and southwestern boundaries go from $A$ to $B\in \ell$ and from $A$ to $D\in \ell$ with the periods $\langle\nearrow,\rightarrow,\uparrow,\rightarrow,\uparrow,\uparrow,\rightarrow\rangle$ and $\langle\downarrow,\rightarrow,\searrow,\rightarrow,\downarrow,\rightarrow,\downarrow\rangle$, respectively.  Again, the northeastern and southeastern boundaries are obtained from the previous boundaries by reflecting them about $\ell$. The four described lattice paths complete the boundary of region $\overline{Z}_n$ (see the region on the right in Figure \ref{Fig14} for $\overline{Z}_9$). The numbers of tilings of $Z_n$ and $\overline{Z}_n$ are obtained by the theorem below.

\begin{theorem}\label{ztiling}
(a) For $m\geq 0$
\begin{align}
&\quad\T(Z_{12m})=3^{48m^2}, \quad\quad\quad\quad\quad        \T(Z_{12m+1})=3^{48m^2+8m},\notag\\
&\T(Z_{12m+2})=3^{48m^2+16m+1},\quad\quad\; \T(Z_{12m+3})=3^{48m^2+24m+3},\notag\\
&\T(Z_{12m+4})=2\cdot 3^{48m^2+32m+5},\quad \T(Z_{12m+5})=2\cdot 3^{48m^2+40m+8},\notag\\
&\T(Z_{12m+6})=3^{48m^2+48m+12},\quad\quad
\T(Z_{12m+7})=3^{48m^2+56m+16},\notag\\
&\T(Z_{12m+8})=3^{48m^2+64m+21},\quad\quad
\T(Z_{12m+9})=3^{48m^2+72m+27},\notag\\
&\T(Z_{12m+10})=2\cdot3^{48m^2+80m+33},\;
\T(Z_{12m+11})=2\cdot3^{48m^2+88m+40}.
\notag\end{align}
(b) For nonnegative $m$
\begin{align}
\T(\overline{Z}_{3m})&=\T(Z_{3m}),\notag\\
\T(\overline{Z}_{6m+1})&=2\T(Z_{6m+1}),\notag\\
\T(\overline{Z}_{6m+2})&=2\T(Z_{6m+2}),\notag\\
\T(\overline{Z}_{6m+4})&=\frac{1}{2}\T(Z_{6m+4}),\notag\\
\T(\overline{Z}_{6m+5})&=\frac{1}{2}\T(Z_{6m+5}).\notag
\end{align}
\end{theorem}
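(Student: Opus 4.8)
The plan is to follow exactly the template established in Section 3 for the fortress regions, transferring the problem from tilings of $Z_n$ and $\overline{Z}_n$ to the matching generating function of a suitably weighted Aztec diamond, and then iterating the operator $\de$ together with the Reduction Theorem \ref{reduction} and Lemma \ref{mulstar}. First I would describe the dual graphs $H_n$ and $\overline{H}_n$ of $Z_n$ and $\overline{Z}_n$: because the zigzag paths are drawn in every second horizontal strip, the local pieces of these dual graphs should again be ``cities'' (rows of diamonds) with extended edges attached, so that after applying the subgraph replacement of Lemma \ref{trans} to every extended city one obtains a weighted Aztec diamond $AD_n(\wt_W)$, where the weight pattern $W$ is periodic with a small period (period matrix of size $6\times 6$ or $12\times 12$, reflecting the period-$7$ step sequences and their interaction with the order-$2$ spacing of the zigzags). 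This first step yields an identity
\begin{equation}
\T(Z_n) = 2^{\mathcal{C}}\,\M(AD_n(\wt_W)),
\notag\end{equation}
exactly analogous to (\ref{eqC}) and (\ref{ratio}), with $\mathcal{C}$ an explicitly countable exponent (the total size of all extended cities), and a parallel identity for $\overline{Z}_n$ with a reflected weight pattern $\overline{W}$.

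Next I would compute $\de(W)$, $\de^{(2)}(W)$, and so on. The key structural fact to verify is that after some fixed number $p$ of applications of $\de$, the weight pattern returns to a scalar multiple of itself (or of $W$ with indices shifted), so that the cell-factors encountered along the way are all powers of $3$ (or $2$) up to a predictable pattern — this is the precise analogue of what makes the fortress count a power of $5$. I expect $p$ to be small; given the $12$-fold periodicity of the answer in $n$, one plausible scenario is that the pattern stabilizes after $\de^{(2)}$ or $\de^{(3)}$ in the same way the matrix $B$ in the proof of Theorem \ref{weighted} is a relabelling of $A$. I would then set up the recurrence relating $\M(AD_n(\wt_W))$ to $\M(AD_{n-p}(\wt_{W}))$ by combining the Reduction Theorem applications with Lemma \ref{mulstar}(a) or (b) to absorb the accumulated rational factors, obtaining something of the shape $\M(AD_n(\wt_W)) = 2^{a(n)}3^{b(n)}\,\M(AD_{n-p}(\wt_W))$ with $a,b$ explicit quadratics in $n$. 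Solving this recurrence with the small base cases $n=0,1,\dots$ checked directly (the regions $Z_0$ through $Z_{11}$ are small enough to enumerate by hand or by a short computation) produces the twelve formulas in part (a); the five relations in part (b) follow by comparing the two parallel recurrences for $W$ and $\overline{W}$ and noting their exponents differ only by the lower-order terms coming from the mismatch between (\ref{ratio})-type counts, which collapses modulo the periodicity to the stated multiplicative factors $1$, $2$, $2$, $\tfrac12$, $\tfrac12$.

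For part (b) specifically, a cleaner route — once the Aztec-diamond reductions are in place — is to observe that $\overline{H}_n$ is obtained from $H_n$ by the same global reflection that swaps ``odd rows start with an extended city'' and ``even rows start with an extended city,'' exactly as $\overline{G}$ relates to $G$ in Section 3. Therefore $\M(\overline{H}_n)$ and $\M(H_n)$ reduce to $\M(AD_n(\wt_{\overline{W}}))$ and $\M(AD_n(\wt_W))$ with $\overline{W}$ a cyclic/reflective relabelling of $W$, and by Lemma \ref{mulstar} the two matching generating functions differ only by an explicit power of the scalar used in the replacement (here $1/2$), raised to an exponent that depends on $n \bmod 6$; tracking that exponent gives precisely the factors $1,2,2,\tfrac12,\tfrac12$ in the residue classes $3m$, $6m+1$, $6m+2$, $6m+4$, $6m+5$. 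I would present this as a short corollary of the part-(a) computation rather than redoing the whole reduction.

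The main obstacle I anticipate is the bookkeeping in the first and third steps: correctly identifying the weight pattern $W$ from the somewhat intricate period-$7$ boundary description (and making sure the extended-edge / extended-city structure of the dual graph is exactly what Lemma \ref{trans} needs, with no leftover vertices that have outside neighbors), and then tracking the exponents of $2$ and $3$ through the $\de$-iteration and the Lemma \ref{mulstar} rescalings without sign or off-by-one errors — the quadratic-in-$m$ exponents in the twelve cases leave no room for slack, so the base-case verification for $n = 0,\dots,11$ is essential as a consistency check. The algebraic content of iterating $\de$ itself should be routine in the manner of Theorem \ref{weighted}; the delicate part is purely combinatorial-geometric (getting $W$ and $\mathcal{C}$ right) plus careful exponent arithmetic.
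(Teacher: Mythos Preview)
Your high-level plan --- transform the dual graph into a weighted Aztec diamond, iterate $\de$, extract a recurrence, and verify base cases --- is exactly the paper's approach. But several of your concrete guesses are off in ways that would derail the execution, so let me flag them.

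First, the transformation step does not go through Lemma~\ref{trans}. The dual graphs of $Z_n$ and $\overline{Z}_n$ do not decompose into extended cities in the sense of Section~3; the zigzag structure is different. The paper instead applies the Vertex-splitting Lemma~\ref{VS} at a family of vertices and then the Spider Lemma~\ref{spider} (all three parts, including the ``partial cell'' variants (b) and (c) on the boundary). The resulting weight pattern is $8\times 8$, not $6\times 6$ or $12\times 12$: it is the matrix $A$ (resp.\ $\overline{A}$) displayed in the paper's proof, with entries $1/2$ and $1$ arranged in a staircase. The prefactor is $2^{\gamma_n}$ with $\gamma_n$ depending on $n\bmod 4$.

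Second, and more important for the structure of the argument: after three applications of $\de$ the pattern does not return to a scalar multiple of itself. What happens (Lemma~\ref{ziglem}) is that $\de^{(3)}(B)=\frac{1}{2ab}\,\overline{B}$, so the two patterns $B$ and $\overline{B}$ are \emph{swapped} by three steps of the reduction. This swap is the engine of the whole proof: it gives a pair of coupled recurrences
\[
\M(AD_n(\wt_B))=2^n a^{x_n}b^{y_n}(a+b)^{2n-3}\,\M(AD_{n-3}(\wt_{\overline{B}})),\qquad
\M(AD_n(\wt_{\overline{B}}))=2^n a^{y_n}b^{x_n}(a+b)^{2n-3}\,\M(AD_{n-3}(\wt_{B})),
\]
with $x_n,y_n$ depending on $n\bmod 4$. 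Composing these gives a step of $6$ in $n$, and combined with the $\bmod\,4$ dependence of $\gamma_n$ and $x_n,y_n$ you get the period-$12$ answer. Specializing $a=1/2$, $b=1$ makes $a+b=3/2$ and produces the powers of $3$.

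Third, your proposed shortcut for part (b) --- relating $\overline{W}$ to $W$ by a row/column rescaling and invoking Lemma~\ref{mulstar} --- will not work: $\overline{A}$ is obtained from $A$ by swapping the roles of $1$ and $1/2$, which is not a scaling of rows or columns. The paper gets part (b) essentially for free from the coupled recurrence above (the swap $B\leftrightarrow\overline{B}$ is built in), together with the three initial values $\M(AD_i(\wt_A))$ and $\M(AD_i(\wt_{\overline{A}}))$ for $i=0,1,2$ and the comparison of $\gamma_n$ with $\overline{\gamma}_n$.

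So: keep your template, but replace Lemma~\ref{trans} by direct vertex-splitting plus Spider Lemma, expect an $8\times 8$ pattern, and look for the three-step swap $B\leftrightarrow\overline{B}$ rather than a return to $B$. Once you have Lemma~\ref{ziglem}, both parts follow by bookkeeping.
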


We will prove Theorem \ref{ztiling} by using Lemma \ref{ziglem} below. Consider two weight patterns
\begin{equation}B=\begin{bmatrix}
a&a&b&b&b&b&a&a\\
a&a&b&b&b&b&a&a\\
a&a&a&a&b&b&b&b\\
a&a&a&a&b&b&b&b\\
b&b&a&a&a&a&b&b\\
b&b&a&a&a&a&b&b\\
b&b&b&b&a&a&a&a\\
b&b&b&b&a&a&a&a
\end{bmatrix}
\text{ and }
\overline{B}=\begin{bmatrix}
b&b&a&a&a&a&b&b\\
b&b&a&a&a&a&b&b\\
b&b&b&b&a&a&a&a\\
b&b&b&b&a&a&a&a\\
a&a&b&b&b&b&a&a\\
a&a&b&b&b&b&a&a\\
a&a&a&a&b&b&b&b\\
a&a&a&a&b&b&b&b
\end{bmatrix},\notag\end{equation}
where $a$ and $b$ are two positive numbers.

\begin{lemma}\label{ziglem}For $n\geq 3$

(a) \begin{equation}\label{zigrecur1}
\M(AD_{n}(\wt_B))=2^{n}a^{x_n}b^{y_n}(a+b)^{z_n}\M(AD_{n-3}(\wt_{\overline{B}})),
\end{equation}
where  $x_{4k}=8k-1$, $x_{4k+1}=8k+2$, $x_{4k+2}=8k+4$, $x_{4k+3}=8k+5$,  $y_{4k}=8k-2$, $y_{4k+1}=8k-1$, $y_{4k+2}=8k+1$, $y_{4k+3}=8k+4$, and $z_n=2n-3$.

(b) \begin{equation}\label{zigrecur2}
\M(AD_{n}(\wt_{\overline{B}}))=2^{n}a^{\overline{x}_n}b^{\overline{y}_n}(a+b)^{\overline{z}_n}\M(AD_{n-3}(\wt_{B})),
\end{equation}
where $\overline{x}_n=y_n$, $\overline{y}_n=x_n$, and $\overline{z}_n=z_n$.
\end{lemma}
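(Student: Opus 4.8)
The plan is to establish both recurrences by the same device used in the proof of Theorem \ref{weighted}: iterate the operator $\de$ three times, track how the cell-factors accumulate at each application of Reduction Theorem \ref{reduction}, and use the multiplicative star lemmas (Lemmas \ref{mulstar} and \ref{matrixstar}) to normalize the resulting weight matrix back into the shape of $\overline{B}$ (respectively $B$). Concretely, I would first compute $\de(B)$, $\de^{(2)}(B)$, and $\de^{(3)}(B)$ explicitly; because $B$ is $8\times 8$ and periodic with a clean two-value pattern $a/b$, each $\de$-step produces a matrix whose entries are ratios of $a$, $b$, $ab$, $a^2+b^2$, $2ab$, and after the third step the combinatorial structure should again be the $a/b$-block pattern of $\overline{B}$ up to an overall scalar. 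The point of going exactly three steps (rather than one or two as in Theorem \ref{weighted}) is visible already in the claimed period of the main theorem — $Z_n$ has a period-12 formula, and $12 = 4\cdot 3$ — so one should expect the weight pattern to return to its own type only after $3$ applications of $\de$, with the $B \leftrightarrow \overline{B}$ swap built in.

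The key steps, in order, are: (1) identify the cells of $AD_n(\wt_B)$ and list their cell-factors — each cell has edges $(x,y,z,t)$ drawn from a single $2\times 2$ block of $B$, so the cell-factor is one of $2a^2$, $2b^2$, or $a^2+b^2$ depending on the block, and I would count how many cells of each kind occur in $AD_n$ as a function of $n \bmod 4$ (this $\bmod 4$ dependence is exactly what produces the four cases $x_{4k}, x_{4k+1}, x_{4k+2}, x_{4k+3}$); (2) apply Reduction Theorem \ref{reduction} to peel off $AD_n(\wt_B) \to AD_{n-1}(\wt_{\de(B)})$, collecting $\prod_c \Delta(c)$; (3) repeat for $AD_{n-1}(\wt_{\de(B)}) \to AD_{n-2}(\wt_{\de^{(2)}(B)})$ and $AD_{n-2}(\wt_{\de^{(2)}(B)}) \to AD_{n-3}(\wt_{\de^{(3)}(B)})$, again collecting the cell-factor products, which will now be rational in $a,b$; (4) use Lemma \ref{mulstar}(a) or (b) (and if needed Lemma \ref{matrixstar}) to rescale the columns/rows of the weight matrix of $AD_{n-3}(\wt_{\de^{(3)}(B)})$ so that it becomes literally $AD_{n-3}(\wt_{\overline{B}})$, absorbing the rescaling factors (which are powers of the normalizing denominators times powers of $2$) into the prefactor; (5) collect all the accumulated factors, simplify, and verify that the exponent of $a$ is the stated $x_n$, the exponent of $b$ is $y_n$, the exponent of $a+b$ is $z_n = 2n-3$, and the power of $2$ is exactly $n$. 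Part (b) follows from part (a) by the symmetry $a \leftrightarrow b$, which interchanges $B$ and $\overline{B}$ and hence swaps $x_n \leftrightarrow y_n$ while fixing $z_n$; I would state this symmetry explicitly and note that it forces $\overline{x}_n = y_n$, $\overline{y}_n = x_n$, $\overline{z}_n = z_n$, so essentially no separate computation is needed.

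The main obstacle I anticipate is bookkeeping rather than conceptual: correctly computing $\de^{(3)}(B)$ and, more delicately, the boundary effects. When the size $2n\times 2n$ of the weight matrix of $AD_n$ is not a multiple of $8$, the periodic pattern $B$ "fits only partially", so the cell-factor counts and the column/row rescaling in Lemma \ref{mulstar} must be carried out case-by-case in $n \bmod 4$ (and the $\de$ operator's cyclic column/row shifts interact with the truncation), which is where errors creep in. I would handle this by fixing a convention for how $B$ aligns with the top-left corner, drawing the cell decomposition for, say, $n = 3,4,5,6,7$ explicitly to pin down the pattern, then checking the small cases $n = 3, 4, 5$ directly against the formula (for $n=3$ the right-hand side involves $AD_0$, whose matching generating function is $1$ by convention, so this is a genuine base-case check) before asserting the general count. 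A secondary check: the exponent $z_n = 2n - 3$ of $(a+b)$ should match the number of cells across three Reduction steps whose cell-factor is $a^2+b^2$ — I would verify $\sum$ of those counts telescopes to $2n-3$, and similarly that the three $2^{(\cdot)}$ contributions from the reductions and the one from Lemma \ref{mulstar} combine to exactly $2^n$, which is a strong internal consistency test that the bookkeeping is right.
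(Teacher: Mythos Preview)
Your plan is essentially the paper's proof: three applications of the Reduction Theorem followed by Lemma \ref{mulstar} to renormalize, with the exponents tracked case-by-case in $n\bmod 4$. Two small corrections will streamline your execution. First, every $2\times 2$ block of $B$ is monochromatic (all $a$'s or all $b$'s), so the step-1 cell-factors are only $2a^2$ and $2b^2$, never $a^2+b^2$; the factor $(a+b)^{2n-3}$ instead comes from the telescoping $(n-1)^2-(n-2)^2$ between steps 2 and 3, where the cell-factors are $\frac{a+b}{4a^2b}$ or $\frac{a+b}{4ab^2}$ and then $\frac{4a^2b}{a+b}$ or $\frac{4ab^2}{a+b}$ respectively. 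Second, you will find that $\de^{(3)}(B)=\frac{1}{2ab}\,\overline{B}$ on the nose, so step (4) is a single global scalar handled by Lemma \ref{mulstar}, and Lemma \ref{matrixstar} is not needed. Your $a\leftrightarrow b$ symmetry argument for part (b) is correct and slightly cleaner than the paper's ``absolutely analogous'' remark.
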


\begin{proof}
The cell-factors of the cells in $AD_{n}(\wt_B)$ are either $2a^2$ or $2b^2$, thus by Reduction Theorem
\begin{equation}\label{zig1}
\M(AD_{n}(\wt_B))=2^{n^2}a^{k_1}b^{k_2}\M(AD_{n-1}(\wt_{d(B)})),
\end{equation}
where $k_1,k_2$ are integers given in (\ref{k1}) and (\ref{k2}),  and where
\[d(B)=\begin{bmatrix}
1/2a&1/2b&1/2b&1/2b&1/2b&1/2a&1/2a&1/2a\\
1/2a&1/2a&1/2a&1/2b&1/2b&1/b&1/2b&1/2a\\
1/2a&1/2a&1/2a&1/2b&1/2b&1/b&1/2b&1/2a\\
1/2b&1/2a&1/2a&1/2a&1/2a&1/2b&1/2b&1/2b\\
1/2b&1/2a&1/2a&1/2a&1/2a&1/2b&1/2b&1/2b\\
1/2b&1/2b&1/2b&1/2a&1/2a&1/2a&1/2a&1/2b\\
1/2b&1/2b&1/2b&1/2a&1/2a&1/2a&1/2a&1/2b\\
1/2a&1/2b&1/2b&1/2b&1/2b&1/2a&1/2a&1/2a
\end{bmatrix}.\]
Since the cell-factors of the cells in $AD_{n-1}(\wt_{d(B)})$ are either $\dfrac{a+b}{4a^2b}$ or $\dfrac{a+b}{4ab^2}$,  the Reduction Theorem yields
\begin{equation}\label{zig2}
\M(AD_{n-1}(\wt_{\de(B)}))=\left(\dfrac{a+b}{4a^2b}\right)^{k_3}\left(\dfrac{a+b}{4ab^2}\right)^{k_4}\M(AD_{n-2}(\wt_{\de^{(2)}(B)})),
\end{equation}
where $k_3$ and $k_4$ are integers given in (\ref{k3}) and (\ref{k4}), and where
\[\de^{(2)}(A)=\begin{bmatrix}
\frac{2 a b}{b+a}&\frac{2 a b}{b+a}&\frac{2 a b}{b+a}&\frac{2 b^2}{b+a}&\frac{2 a b}{b+a}&\frac{2 a b}{b+a}&\frac{2 a b}{b+a}&\frac{2 a^2}{b+a}\\
\frac{2 a^2}{b+a}&\frac{2 a b}{b+a}&\frac{2 a b}{b+a}&\frac{2 a b}{b+a}&\frac{2 b^2}{b+a}&\frac{2 a b}{b+a}&\frac{2 a b}{b+a}&\frac{2 a b}{b+a}\\
\frac{2 a b}{b+a}&\frac{2 a^2}{b+a}&\frac{2 a b}{b+a}&\frac{2 a b}{b+a}&\frac{2 a b}{b+a}&\frac{2 b^2}{b+a}&\frac{2 a b}{b+a}&\frac{2 a b}{b+a}\\
\frac{2 a b}{b+a}&\frac{2 a b}{b+a}&\frac{2 a^2}{b+a}&\frac{2 a b}{b+a}&\frac{2 a b}{b+a}&\frac{2 a b}{b+a}&\frac{2 b^2}{b+a}&\frac{2 a b}{b+a}\\
\frac{2 a b}{b+a}&\frac{2 a b}{b+a}&\frac{2 a b}{b+a}&\frac{2 a^2}{b+a}&\frac{2 a b}{b+a}&\frac{2 a b}{b+a}&\frac{2 a b}{b+a}&\frac{2 b^2}{b+a}\\
\frac{2 b^2}{b+a}&\frac{2 a b}{b+a}&\frac{2 a b}{b+a}&\frac{2 a b}{b+a}&\frac{2 a^2}{b+a}&\frac{2 a b}{b+a}&\frac{2 a b}{b+a}&\frac{2 a b}{b+a}\\
\frac{2 a b}{b+a}&\frac{2 b^2}{b+a}&\frac{2 a b}{b+a}&\frac{2 a b}{b+a}&\frac{2 a b}{b+a}&\frac{2 a^2}{b+a}&\frac{2 a b}{b+a}&\frac{2 a b}{b+a}\\
\frac{2 a b}{b+a}&\frac{2 a b}{b+a}&\frac{2 b^2}{b+a}&\frac{2 a b}{b+a}&\frac{2 a b}{b+a}&\frac{2 a b}{b+a}&\frac{2 a^2}{b+a}&\frac{2 a b}{b+a}
\end{bmatrix}.\]
One readily sees that a cell in $AD_{n-2}(\wt_{\de^{(2)}(A)})$ has cell-factor either $\dfrac{4a^2b}{a+b}$ or $\dfrac{4ab^2}{a+b}$. Apply the Reduction Theorem again, we obtain
\begin{equation}\label{zig3}
\M(AD_{n-2}(\wt_{\de^{(2)}(B)}))=\left(\dfrac{4a^2b}{a+b}\right)^{k_5}\left(\dfrac{4ab^2}{a+b}\right)^{k_6}\M(AD_{n-3}(\wt_{\de^{(3)}(B)})),
\end{equation}
where $k_5$ and $k_6$ are integers given in (\ref{k5}) and (\ref{k6}), and where $\de^{(3)}(B)=\frac{1}{2ab}\overline{B}$. Thus, we get from Lemma \ref{mulstar}
\begin{equation}\label{zig4}
\M(AD_{n-3}(\wt_{\de^{(3)}(B)}))=\left(\frac{1}{2ab}\right)^{(n-2)(n-3)}\M(AD_{n-3}(\wt_{\overline{B}})).
\end{equation}

We get by calculating explicitly
\begin{equation}\label{k1}k_1=\begin{cases}
8k^2&\text{ if $n=4k$;}\\
8k^2+4k+1&\text{ if $n=4k+1$;}\\
8k^2+8k+3&\text{ if $n=4k+2$;}\\
8k^2+8k+5&\text{ if $n=4k+3$,}
\end{cases}
\end{equation}
\begin{equation}\label{k2}k_2=\begin{cases}
8k^2&\text{ if $n=4k$;}\\
8k^2+4k&\text{ if $n=4k+1$;}\\
8k^2+8k+1&\text{ if $n=4k+2$;}\\
8k^2+8k+4&\text{ if $n=4k+3$,}
\end{cases}
\end{equation}
\begin{equation}\label{k3}k_3=\begin{cases}
8(k-1)^2+12(k-1)+5&\text{ if $n=4k$;}\\
8k^2&\text{ if $n=4k+1$;}\\
8k^2+4k+1&\text{ if $n=4k+2$;}\\
8k^2+8k+3&\text{ if $n=4k+3$,}
\end{cases}
\end{equation}
\begin{equation}\label{k4}k_4=\begin{cases}
8(k-1)^2+12(k-1)+4&\text{ if $n=4k$;}\\
8k^2&\text{ if $n=4k+1$;}\\
8k^2+4k&\text{ if $n=4k+2$;}\\
8k^2+8k+1&\text{ if $n=4k+3$,}
\end{cases}
\end{equation}
\begin{equation}\label{k5}k_5=\begin{cases}
8(k-1)^2+8(k-1)+3&\text{ if $n=4k$;}\\
8(k-1)^2+12(k-1)+5&\text{ if $n=4k+1$;}\\
8k^2&\text{ if $n=4k+2$;}\\
8k^2+4k+1&\text{ if $n=4k+3$,}\\
\end{cases}
\end{equation}
and
\begin{equation}\label{k6}
k_6=\begin{cases}
8(k-1)^2+8(k-1)+1&\text{ if $n=4k$;}\\
8(k-1)^2+12(k-1)+4&\text{ if $n=4k+1$;}\\
8k^2&\text{ if $n=4k+2$;}\\
8k^2+4k&\text{ if $n=4k+3$}.\\
\end{cases}
\end{equation}
By (\ref{zig1})--(\ref{k6}), we obtain the recurrence in part (a). Part (b) is absolutely analogous.
  \end{proof}

\begin{proof}[Proof of Theorem \ref{ztiling}]
Apply the Vertex-splitting Lemma \ref{VS} to all circled vertices in the dual graph of $Z_n$ as in Figure \ref{Fig15}, for $n=7$ (the general case can be treated similarly). Apply suitable replacements in Spider Lemma \ref{spider} to all shaded cells  and shaded partial cells in the resulting graph (illustrated by  Figure \ref{Fig16}; the dotted edges have weight $1/2$, and all shaded cells and partial cells inside the dotted contours will be removed). We get a graph isomorphic to the weighted Aztec diamond of order $n$ with weight pattern

\begin{figure}\centering
\includegraphics[width=0.75\textwidth]{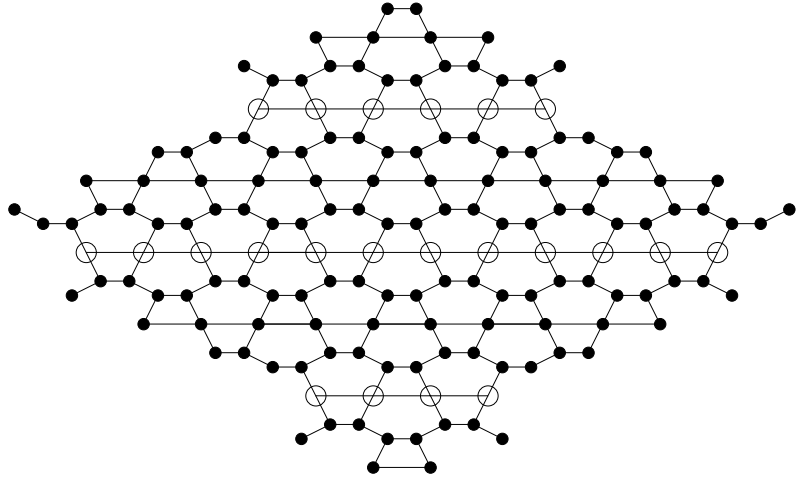}
\caption{The dual graph of $Z_7$.}
\label{Fig15}
\end{figure}

\begin{figure}\centering
\includegraphics[width=0.75\textwidth]{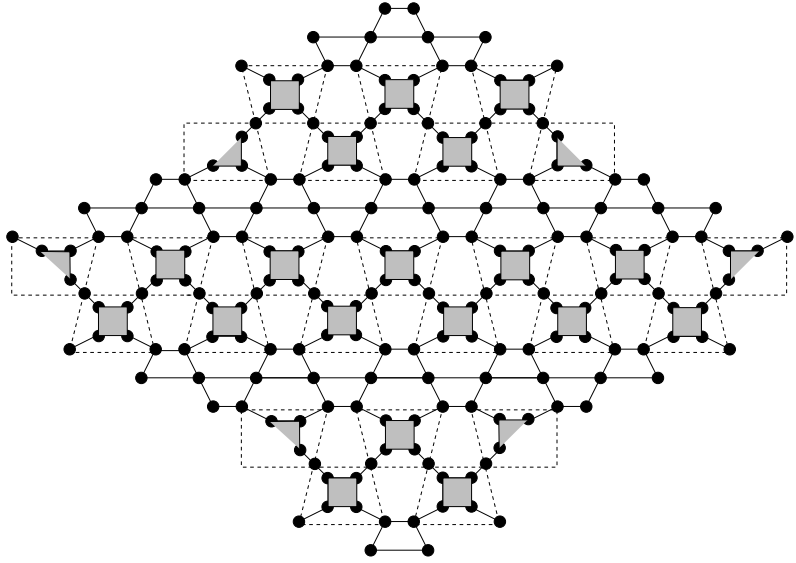}
\caption{The graph obtained from the dual graph of $Z_7$ by applying Vertex-splitting Lemma.}
\label{Fig16}
\end{figure}

\[A=\begin{bmatrix}
1/2&1/2&1&1&1&1&1/2&1/2\\
1/2&1/2&1&1&1&1&1/2&1/2\\
1/2&1/2&1/2&1/2&1&1&1&1\\
1/2&1/2&1/2&1/2&1&1&1&1\\
1&1&1/2&1/2&1/2&1/2&1&1\\
1&1&1/2&1/2&1/2&1/2&1&1\\
1&1&1&1&1/2&1/2&1/2&1/2\\
1&1&1&1&1/2&1/2&1/2&1/2\\
\end{bmatrix}.\]
By Lemmas \ref{VS} and \ref{spider}, we deduce that
\begin{equation}\label{gama1}
\T(Z_n)=2^{\gamma_n} \M(AD_n(\wt_A)),
\end{equation}
where $\gamma_{4k}=8k^2$, $\gamma_{4k+1}=8k^2+4k+1$,  $\gamma_{4k+2}=8k^2+8k+3$,  and $\gamma_{4k+3}=8k^2+12k+5$, for $k\geq 0$.

\begin{figure}\centering
\includegraphics[width=0.75\textwidth]{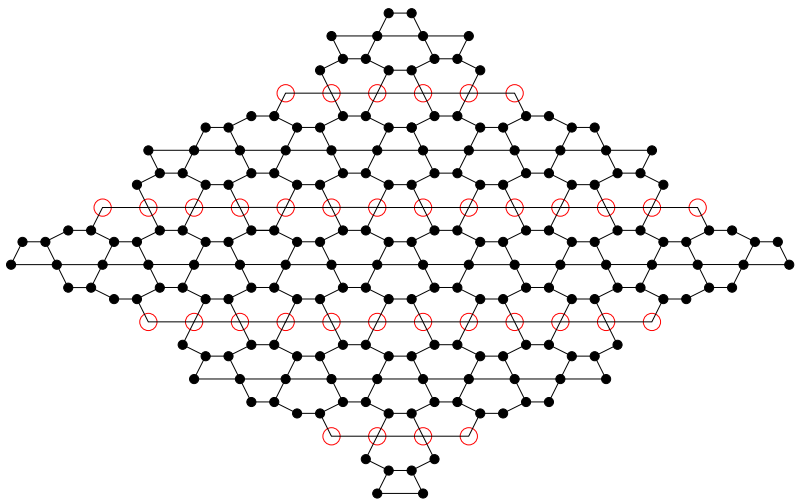}
\caption{The dual graph of $\overline{Z}_9$.}
\label{Fig17}
\end{figure}

\begin{figure}\centering
\includegraphics[width=0.75\textwidth]{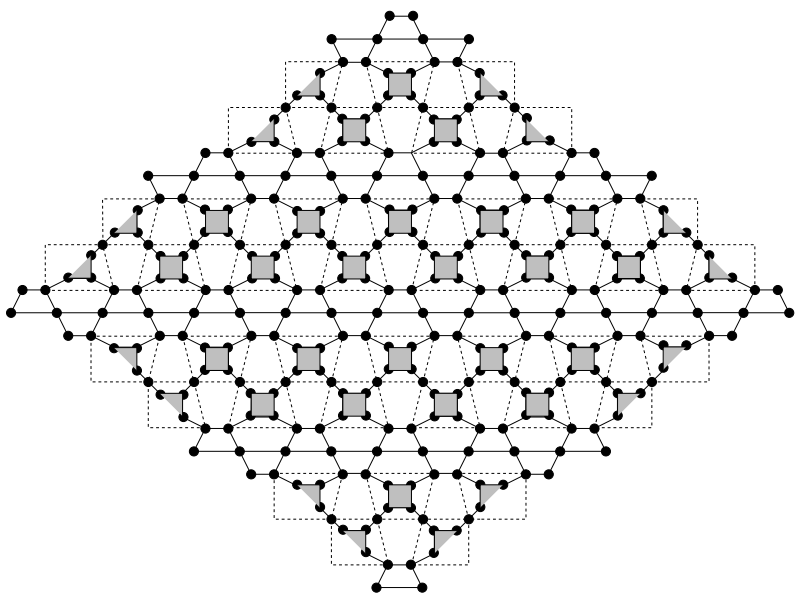}
\caption{The graph obtained from the dual graph of $\overline{Z}_9$ by applying Vertex-splitting Lemma.}
\label{Fig18}
\end{figure}

Do similarly for the dual graph of $\overline{Z}_n$ (shown in Figures \ref{Fig17} and \ref{Fig18}, for $n=9$). The resulting graph is isomorphic to the weighted Aztec diamond of order $n$ with the weight pattern
\[\overline{A}=\begin{bmatrix}
1&1&1/2&1/2&1/2&1/2&1&1\\
1&1&1/2&1/2&1/2&1/2&1&1\\
1&1&1&1&1/2&1/2&1/2&1/2\\
1&1&1&1&1/2&1/2&1/2&1/2\\
1/2&1/2&1&1&1&1&1/2&1/2\\
1/2&1/2&1&1&1&1&1/2&1/2\\
1/2&1/2&1/2&1/2&1&1&1&1\\
1/2&1/2&1/2&1/2&1&1&1&1
\end{bmatrix},\]
and
\begin{equation}\label{gama2}
\T(\overline{Z}_n)=2^{\overline{\gamma}_n}\M(AD_n(\wt_{\overline{A}})),
\end{equation}
 where $\overline{\gamma}_{4k}=8k^2$, $\overline{\gamma}_{4k+1}=8k^2+4k$,  $\overline{\gamma}_{4k+2}=8k^2+8k+1$,  and $\overline{\gamma}_{4k+3}=8k^2+12k+4$, for $k\geq0$.

By Reduction Theorem, we get that the three initial values of $\M(AD_{n}(\wt_A))$, and that the three initial values of $\M(AD_{n}(\wt_{\overline{A}}))$ are

$\M(AD_0(\wt_A))=1$,

$\M(AD_1(\wt_A))=2^{-1},$

 $\M(AD_2(\wt_A))=2^{-3}\cdot3,$

 $\M(AD_0(\wt_{\overline{A}}))=1$,

 $\M(AD_1(\wt_{\overline{A}}))=2,$

 $\M(AD_2(\wt_{\overline{A}}))=3.$\\
For $n\geq3$ the values of $\M(AD_{n}(\wt_A))$ and $\M(AD_{n}(\wt_{\overline{A}}))$ are obtained from Lemma \ref{ziglem} by specializing $a=1/2$ and $b=1$. Thus, the theorem follows from (\ref{gama1}) and (\ref{gama2}).
  \end{proof}

In previous work, Blum has considered a different family of subgraphs of the square grid for which he noticed that the number of perfect matching seems to be always a power of 3 or twice a power of 3.  Consider the sublattice of the square lattice showed in Figure \ref{Fig19} and view it as an infinite graph $G$.  In particular, each row in this lattice consists of $1\times 2$ and $1\times 3$ bricks, that occur alternatively. All the odd rows are the same, and the even rows are obtained from the left rows by shifting  them  one unit to the right. Draw a boundary of the Aztec diamond graph of order $n$ on this lattice so that the easternmost edge has an embedded hexagon east of it. Let $B_n$ be the induced subgraph of $G$ spanned by the vertices lying inside or on the boundary of the Aztec diamond graph.  Ciucu proved the Blum's conjecture by applying the Reduction Theorem \textit{30 times} (!). In particular, we have the following theorem.

\begin{figure}\centering
\includegraphics[width=0.75\textwidth]{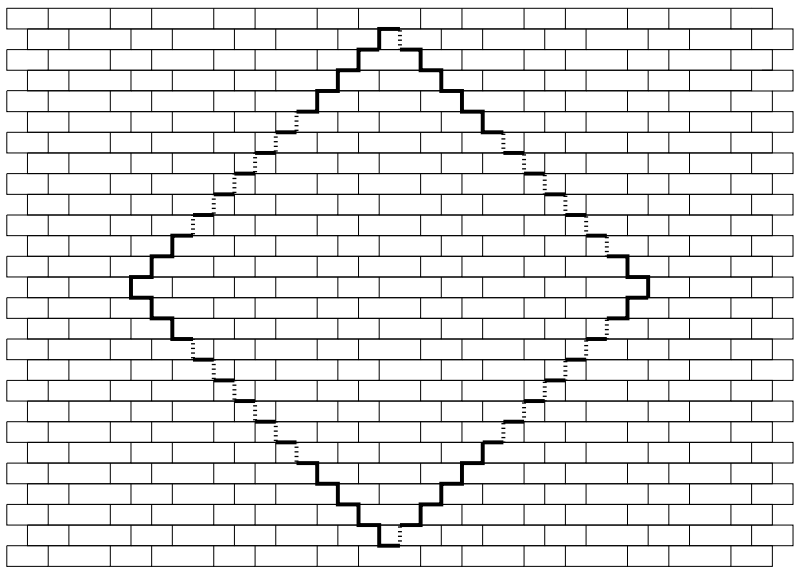}
\caption{The graph $B_{13}$.}
\label{Fig19}
\end{figure}

\begin{theorem}[Ciucu \cite{Ciucu5}]\label{Ciucu-brick}
For $n\geq 31$ we have
\begin{equation}
\M(B_n)=3^{4x_n}\M(B_{n-30}),
\notag\end{equation}
where $x_{5k+1}=4k-12$, $x_{5k+2}=4k-10$ and $x_{5k+3}=x_{5k+4}=x_{5k+5}=4k-8$, $k\geq 6$.
\end{theorem}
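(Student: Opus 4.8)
The plan is to avoid the thirty-fold iteration of the Reduction Theorem~\ref{reduction} used in \cite{Ciucu5} by building an explicit bridge between the brick graphs $B_n$ and the regions $Z_m$ of Section~4, and then quoting Theorem~\ref{ztiling}. Concretely, I would first reduce $B_n$ to a weighted Aztec diamond by the same kind of local surgery that produced \eqref{gama1}. One peels off the edges that are forced by the jagged Aztec-diamond contour of $B_n$ near its corners (removing a controlled, $n$-dependent number of vertices), then applies the Vertex-splitting Lemma~\ref{VS} at the T-junctions where two bricks of a common row meet, so that every $1\times 3$ brick becomes a chain of two diamond cells and every $1\times 2$ brick a single diamond cell, and finally applies urban renewal (Spider Lemma~\ref{spider}(a)) to each of these cells. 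Each urban renewal on an edge of weight $x=1/2$ contributes a factor $2x^2=1/2$, and the outcome is a graph isomorphic to a weighted Aztec diamond $AD_{m}(\wt_W)$, where $m=m(n)$ is an explicit eventually-linear function of $n$ and $W$ is a periodic weight pattern all of whose entries lie in $\{1,1/2\}$.

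The crucial observation---the source of ``the two formulas are nearly identical''---is that, up to the cyclic row/column shifts absorbed by Lemma~\ref{mulstar}, the pattern $W$ is precisely the pattern $A$ of Theorem~\ref{ztiling} (equivalently $\overline A$): both the brick wall of Figure~\ref{Fig19} and the square lattice with every second zigzag path drawn in produce, after these moves, a weighted Aztec diamond whose cell-factors are $2a^2$ and $2b^2$ and which, after one application of $\de$, has cell-factors proportional to $a+b$, with $(a,b)=(1/2,1)$, so that $a+b=3/2$ is the sole origin of the powers of $3$. Thus Step~1 yields an identity $\M(B_n)=2^{\mu_n}\,\M(AD_{m}(\wt_A))$ with $\mu_n$ an explicit quadratic quasi-polynomial in $n$, exactly parallel to \eqref{gama1}. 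Comparing with \eqref{gama1} itself gives
\[
\M(B_n)=2^{\mu_n-\gamma_{m}}\,\T(Z_{m}),\qquad m=m(n),
\]
and since $\M(B_n)\in\mathbb{Z}$ while, by Theorem~\ref{ztiling}, $\T(Z_m)$ is a power of $3$ or twice a power of $3$, the power of $2$ is forced to vanish up to the single factor $\tfrac12$ that Theorem~\ref{ztiling}(a) can carry: $\M(B_n)=3^{e_n}\,\T(Z_m)$ or $\tfrac12\cdot 3^{e_n}\,\T(Z_m)$.

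From here the recurrence is bookkeeping. The correspondence will satisfy $m(n)-m(n-30)=12$ (this is exactly what matches the period-$30$ recurrence for $B$ against the period-$12$ structure of Theorem~\ref{ztiling}(a)), so $Z_m$ and $Z_{m-12}$ lie in the same residue class modulo $12$; by Theorem~\ref{ztiling}(a) their ratio $\T(Z_m)/\T(Z_{m-12})$ is then a pure power of $3$ whose exponent is linear in $m$, hence in $n$, and the $\tfrac12$'s (when present) cancel. A short check over the residue of $n$ modulo $5$---which is precisely what splits $x_{5k+j}$ into its three sub-cases $4k-12$, $4k-10$, $4k-8,4k-8,4k-8$---identifies this exponent as $4x_n$. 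The hypothesis $n\ge 31$ is exactly what guarantees that the forced-edge peeling and the periodic bulk of Step~1 fit together without degenerate overlaps, mirroring the condition $k\ge 6$ in the statement.

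The main obstacle is entirely in Step~1: specifying exactly which vertices to split and which edges are forced along the jagged contour of $B_n$ for each residue class of $n$ modulo $5$ (and modulo $2$, for the alternation of rows), computing $\mu_n$ and $m(n)$ exactly, and then verifying---necessarily cell by cell, since the brick wall and the zigzag lattice look quite different---that the two reduced weighted Aztec diamonds really are isomorphic with the \emph{same} weight pattern $A$ rather than some near relative of it. Once the dictionary $B_n\leftrightarrow Z_{m(n)}$ is nailed down, Theorem~\ref{ztiling} supplies everything else, and in fact one then obtains the full closed form of $\M(B_n)$, not merely the recurrence.
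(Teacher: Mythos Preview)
Your overall strategy---deduce the $B_n$ recurrence from Theorem~\ref{ztiling} via an explicit identity linking $\M(B_n)$ to $\T(Z_{m})$---is exactly the paper's, and your arithmetic $m(n+30)-m(n)=12$ is correct. Where you diverge is in how the bridge is built. You propose to reduce $B_n$ by vertex-splitting and urban renewal to a weighted Aztec diamond $AD_m(\wt_W)$ and then argue that $W$ coincides with the $8\times 8$ pattern $A$ of \eqref{gama1}; as you yourself flag, that identification is the entire difficulty, and you leave it open. The paper never touches a weighted Aztec diamond on this side of the argument. It introduces an intermediate family $C_n$ living on a brick wall in which every $1\times 3$ brick has been contracted to a $1\times 1$ brick: applying the Vertex-splitting Lemma \emph{in reverse} (collapsing each three-vertex chain inside a long brick to a single vertex) gives $\M(B_{5k\pm 2})=\M(C_{3k\pm 1})$; separately, rotating and deforming the dual graph of $Z_m$ and removing forced edges shows $\T(Z_{4k+3})=\M(C_{6k+4})$, $\T(Z_{4k})=\M(C_{6k-1})$, and analogous identities for $\overline{Z}$. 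Composing these purely combinatorial identifications yields the exact equalities $\M(B_{10k+7})=\T(Z_{4k+3})$, $\M(B_{10k-2})=\T(Z_{4k})$, $\M(B_{10k+2})=\T(\overline{Z}_{4k+1})$, $\M(B_{10k+3})=\T(\overline{Z}_{4k+2})$, supplemented by the easy forced-edge fact $\M(B_{5k-2})=\M(B_{5k-1})=\M(B_{5k})=\M(B_{5k+1})$. Theorem~\ref{ztiling} then finishes.

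So the paper's route is strictly simpler: a direct unweighted graph identification via $C_n$, with no weight-pattern audit at all. Your route would also work if the audit in your ``main obstacle'' paragraph succeeded---indeed the paper's identities imply a posteriori that $\M(B_n)=2^{\gamma_m}\M(AD_m(\wt_A))$ for the right $m$---but establishing that directly from the brick-wall geometry is the hard way around, and the need for thirty applications of $\de$ in Ciucu's original argument is a warning that the ``naive'' weighted Aztec diamond attached to $B_n$ does not carry the pattern $A$. (A small slip: your ``factor $2x^2=1/2$'' from urban renewal is backwards; the cells you renew have all edges of weight $1$, so each contributes a factor of $2$ and leaves behind edges of weight $1/2$.)
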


One can realize that the number of tilings of $Z_n$ ($\overline{Z}_n$) and the number of perfect matchings of $B_n$ are similar. They are both a power of $3$ or twice a power of $3$. Is there any relationship between these values? We have the answer for the latter question in the next part of this section. Moreover, the answer provides a new proof for the conjecture of Blum above.

 Consider a new lattice that is obtained from the lattice in the Blum's conjecture by replacing all $1\times 3$ bricks by  $1\times1$  bricks. Draw also the boundary of the Aztec diamond of order $n$ on the new lattice, so that the easternmost edge has an embedded hexagon east of it. Again, let $C_n$ be the induced subgraph spanned by the vertices lying inside or on the boundary of the Aztec diamond (see the right picture of Figure \ref{Fig20}).

\begin{figure}\centering
\includegraphics[width=0.90\textwidth]{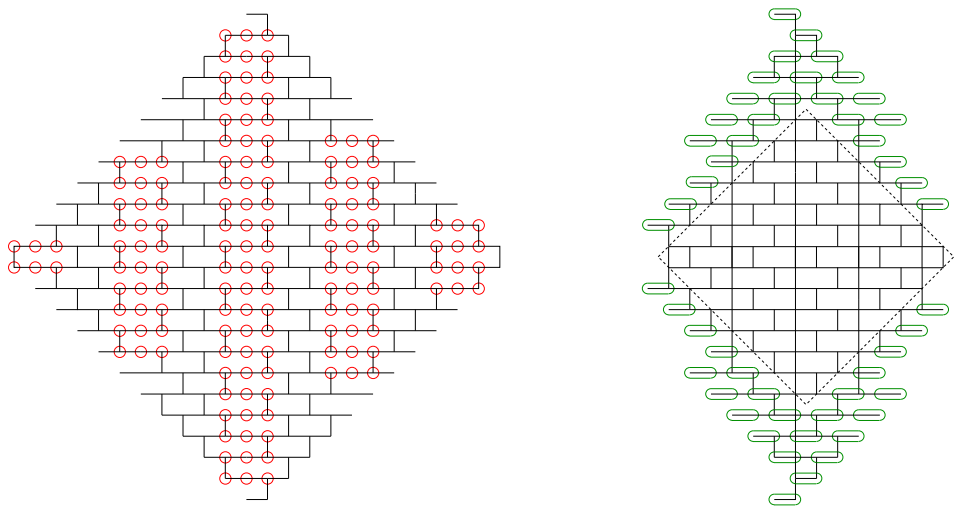}
\caption{Illustrating the transformation the graph $B_{12}$ (left) into the graph $C_7$ (inside the dotted diamond contour on the right).}
\label{Fig20}
\end{figure}

Apply Vertex-splitting Lemma (in reverse) to identify all three consecutive circled vertices in a row of  $B_{5k+2}$ as in the left picture of Figure \ref{Fig20}, for $k=2$ (the general case can be obtained similarly).  Deform the resulting graph into a subgraph of the new lattice above. After removing some horizontal forced edges we get the graph $C_{3k+1}$ (illustrated by the right picture in Figure \ref{Fig20}; the forced edges are circled).
Thus,
\begin{equation}\label{z1}
\M(B_{5k+2})=\M(C_{3k+1}).
\end{equation}
Apply the same process to graph $B_{5k-2}$ we get graph $C_{3k-1}$, so
\begin{equation}\label{z1'}
\M(B_{5k-2})=\M(C_{3k-1}).
\end{equation}

\begin{figure}\centering
\includegraphics[width=0.75\textwidth]{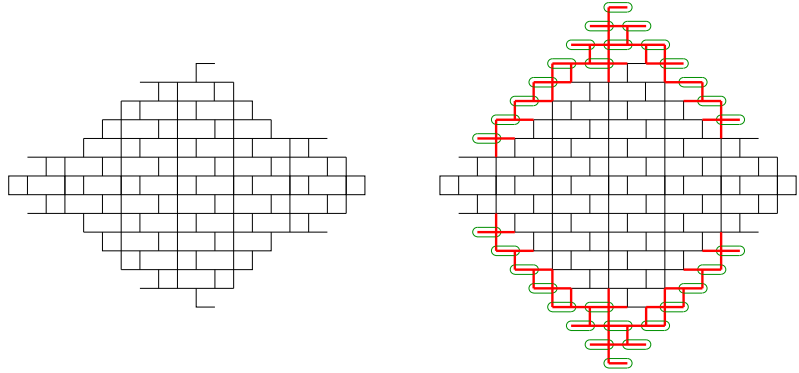}
\caption{The deformed version of the dual graph of $Z_7$ (left), and the graph $C_{10}$ (right).}
\label{Fig21}
\end{figure}

Rotate $45^0$ the dual graph of $Z_n$, and deform the resulting graph into a subgraph of the new lattice. One can see that the dual graph of  $Z_{4k+3}$ is obtained from $C_{6k+4}$ by removing some horizontal forced edges (see Figure \ref{Fig21} for an example).
 Therefore, we imply that
\begin{equation}\label{z2}
\T(Z_{4k+3})=\M(C_{6k+4}).
\end{equation}
In the same fashion, we have
\begin{equation}\label{z3}
\T(Z_{4k})=\M(C_{6k-1}),\\
\T(\overline{Z}_{4k+1})=\M(C_{6k+1}), \text{ and }  \T(\overline{Z}_{4k+2})=\M(C_{6k+2}).
\end{equation}
We get the following equalities by applying (\ref{z1})--(\ref{z3}):
\begin{equation}\label{zigrelate1}
\T(Z_{4k+3})=\M(B_{10k+7}),
\end{equation}
\begin{equation}\label{zigrelate2}
\T(Z_{4k})=\M(B_{10k-2}),
\end{equation}
\begin{equation}\label{zigrelate3}
\T(\overline{Z}_{4k+1})=\M(B_{10k+2}),
\end{equation}
\begin{equation}\label{zigrelate4}
\T(\overline{Z}_{4k+2})=\M(B_{10k+3}).
\end{equation}
Moreover, by considering horizontal forced edges in graph $B_n$, we can verify easily that
\begin{equation}\label{zigrelate5}
\M(B_{5k-2})=\M(B_{5k-1})=\M(B_{5k})=\M(B_{5k+1}),
\end{equation}
 for $k\geq 1$. Thus,  Theorem \ref{ztiling}  and equalities (\ref{zigrelate1})--(\ref{zigrelate5}) yield a new proof for the Blum's ex-conjecture.

\section{Variants of the square lattice}
\label{sec5}

The square lattice has the set of nodes $\mathbb{Z}^2=\{(x,y)|x,y\in \mathbb{Z}\}$. The two subgraph replacement rules in Figure \ref{Fig22} will play the key roles in this section.  In particular, the local subgraph around a node, i.e. the cross on the left of Figures \ref{Fig22}(a) and (b), is replaced by the one on the right with corresponding black vertices and two new white vertices.   The graph replacement in Figure \ref{Fig22}(a) (resp. Figure \ref{Fig22}(b)) is called the first (resp. the second) \textit{node replacement}.
 We will get a large number of variants of the square lattice by  periodically applying these node replacements (together with some simple modifications). We will go over several examples in the next part of this section. In those examples, certain families of regions have the numbers of tilings given by products of perfect powers.

\begin{figure}\centering
\includegraphics[width=0.9\textwidth]{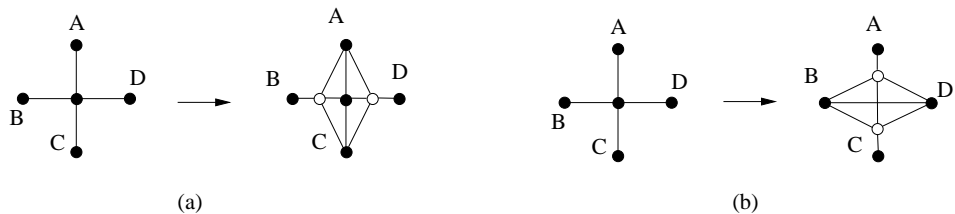}
\caption{Two node replacements.}
\label{Fig22}
\end{figure}

Denote by $\mathbb{D}_m(a,b)$ the diamond of side-length $m\sqrt{2}$ whose western vertex is the node $(a,b)$, where $a$ and $b$ are some integers. Apply the first node replacement to  all nodes $(4k,4l\pm 1)$, and apply the second node replacement to all nodes $(4k+2, 4l\pm1)$, for  any integers $k$ and $l$. Consider a region on the resulting lattice that consists of all elementary regions lying completely or partially inside the diamond $\mathbb{D}_m(-1,1)$ (see the left picture in Figure \ref{Fig23} for an example). Denote by $S^{(1)}_m$ this region. The number of tilings of $S^{(1)}_m$ is given by the following theorem.

\begin{figure}\centering
\includegraphics[width=0.90\textwidth]{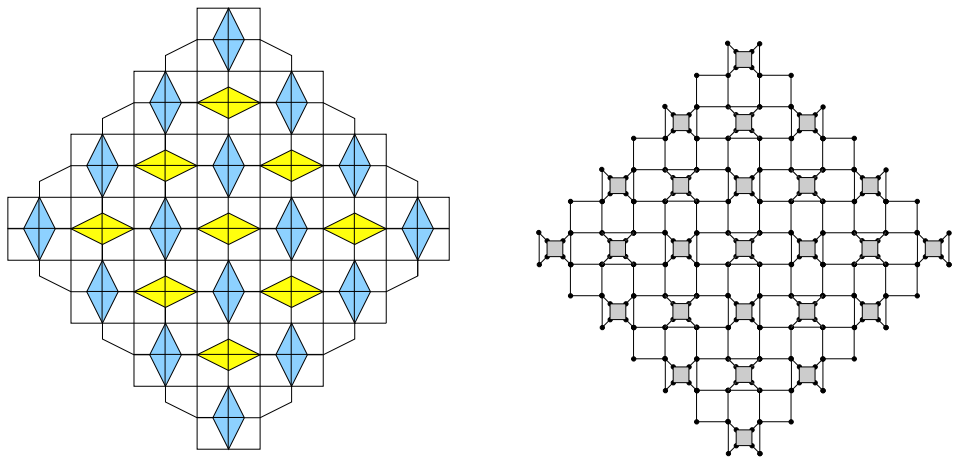}
\caption{The region $S^{(1)}_7$ (left) and its dual graph (right).}
\label{Fig23}
\end{figure}

\begin{theorem}\label{power7thm} For any $n\geq 0$
\[\T(S^{(1)}_{4n})=7^{n(2n-1)}37^{n(2n+1)}, \ \ \  \T(S^{(1)}_{4n+1})=5\cdot7^{2n^2}37^{2n(n+1)},\]
\[\T(S^{(1)}_{4n+2})=7^{n(2n+1)}37^{(n+1)(2n+1)},\ \ \  \T(S^{(1)}_{4n+3})=2\cdot7^{2n(n+1)}37^{2n(n+2)+2}.\]
\end{theorem}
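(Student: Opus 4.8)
The plan is to follow the same pattern as the proofs of Theorems \ref{main} and \ref{ztiling}: first deform the dual graph of $S^{(1)}_m$ into a weighted Aztec diamond graph with an explicit periodic weight pattern, and then compute its matching generating function by iterating the Reduction Theorem \ref{reduction}.

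First I would examine the dual graph $G_m$ of $S^{(1)}_m$, shown on the right of Figure \ref{Fig23}. The gadgets introduced by the first and second node replacements appear there as small subgraphs to which the Vertex-splitting Lemma \ref{VS} applies (splitting each vertex shared by two such gadgets), after which every resulting four-cycle --- including the partial four-cycles along the boundary --- can be collapsed by the Spider Lemma \ref{spider} (urban renewal and its variants in Figure \ref{Fig4}). Carrying out all of these local moves should turn $G_m$ into a weighted Aztec diamond graph $AD_m(\wt_A)$, where $A$ is an explicit periodic array (I expect of size $8\times 8$) whose entries are built from two positive parameters $a$ and $b$; as in Theorem \ref{weighted} and Corollary \ref{fortressw} it is cleanest to carry $a,b$ symbolically and to specialize only at the very end. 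Each urban-renewal move contributes a known scalar, so one arrives at an identity
\[\T(S^{(1)}_m)=2^{\gamma_m}\,\M(AD_m(\wt_A)),\]
where $\gamma_m$ is a quadratic quasi-polynomial in $m$ of period dividing $4$, obtained by simply counting the cells and partial cells of each type in the picture.

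Next I would prove a reduction lemma in the spirit of Lemma \ref{ziglem}. One computes $\de(A)$, $\de^{(2)}(A)$, $\de^{(3)}(A)$, $\de^{(4)}(A)$ explicitly; I expect the successive cell-factors to be of the shapes $2a^2,\ 2b^2$ at the first stage and combinations that specialize to the two primes $7$ and $37$ at the later stages, with $\de^{(4)}(A)$ equal to a scalar multiple of $A$ itself (so the weight pattern is $4$-periodic under $\de$, possibly after passing through a companion pattern). Applying the Reduction Theorem \ref{reduction} four times and then using Lemma \ref{mulstar} (and, if needed, Lemma \ref{matrixstar}) to rescale the scalar multiple of $A$ back to $A$ yields a recurrence of the form
\[\M(AD_n(\wt_A))=2^{x_n}\,a^{y_n}\,b^{z_n}\,P(a,b)^{u_n}\,Q(a,b)^{v_n}\,\M(AD_{n-4}(\wt_A)),\qquad n\geq 4,\]
where $P$ and $Q$ are the two quadratic forms appearing as cell-factors during the reduction, and $x_n,y_n,z_n,u_n,v_n$ are explicit quadratic quasi-polynomials of period $4$ obtained by counting how many cells of $AD_n$, $AD_{n-1}$, $AD_{n-2}$, $AD_{n-3}$ carry each value. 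Recording the four base cases $\M(AD_0(\wt_A))=1$, $\M(AD_1(\wt_A))$, $\M(AD_2(\wt_A))$, $\M(AD_3(\wt_A))$ (each a short direct computation with the Reduction Theorem), then substituting the particular values of $a,b$ forced by the two node replacements, unwinding the recurrence along each residue class of $m$ modulo $4$, and multiplying by $2^{\gamma_m}$, produces the four claimed formulas; the stray factors $5$ and $2$ in the $4n+1$ and $4n+3$ cases should come from the initial values $\M(AD_1(\wt_A))$ and $\M(AD_3(\wt_A))$.

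The main obstacle I anticipate is the bookkeeping. Identifying the weight pattern $A$ correctly --- in particular checking that the node-replacement gadgets really do reduce $G_m$ to an honest Aztec diamond graph under Lemmas \ref{VS} and \ref{spider} --- and then counting cells of each type at each of the four reduction stages to obtain the quasi-polynomial exponents, while keeping track of the several rescaling factors from Lemma \ref{mulstar}, is long and error-prone. Verifying that $\de^{(4)}(A)$ is genuinely proportional to $A$, so that the recurrence closes with period $4$, is the structural step on which everything hinges; once it is in place the rest is mechanical, exactly as in the proofs of Theorems \ref{weighted} and \ref{ztiling}.
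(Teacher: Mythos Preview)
Your overall strategy---reduce the dual graph to a weighted Aztec diamond and then iterate the Reduction Theorem until the weight pattern recurs---is exactly the paper's, but several of your anticipated details are off in ways that would cost you effort. The paper does \emph{not} use vertex-splitting here: the node-replacement gadgets collapse directly under the Spider Lemma \ref{spider}(a), after which the Edge-replacing Lemma \ref{ER} merges the resulting parallel edges. The outcome is an Aztec diamond with a $4\times4$ (not $8\times8$) weight pattern
\[
B_1=\begin{bmatrix}
3/2&1/2&1&1\\
1/2&3/2&1&1\\
1&1&1/2&3/2\\
1&1&3/2&1/2
\end{bmatrix},
\]
and the prefactor $\gamma_m$ is simply the number of shaded cells, namely $2n^2$ when $m=2n$ and $(n+1)^2+n^2$ when $m=2n+1$.

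For the reduction, the paper first proves a clean four-parameter result (Theorem \ref{7gen}) for the pattern
\[
B=\begin{bmatrix}
a&b&c&d\\ b&a&d&c\\ d&c&b&a\\ c&d&a&b
\end{bmatrix},
\]
and the key structural observation is that after \emph{two} (not four) applications of $\de$ one lands, up to a scalar, on $r(B)$, where $r$ is an explicit involution on such $4\times4$ patterns. Since $r^{(2)}(B)=B$, two more steps close the recurrence, giving closed product formulas with the single nontrivial factor $P=ab(c^2+d^2)^2+cd(a^2+b^2)^2$ together with $ab+cd$ and $a^2+b^2$. Specializing $a=3/2$, $b=1/2$, $c=d=1$ yields $P=37/4$, $ab+cd=7/4$, $a^2+b^2=5/2$, which is where $7$, $37$, and the stray $5$ and $2$ come from. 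So your plan works, but the weight pattern is smaller, the reduction period at the pattern level is $2$ rather than $4$, and carrying four parameters (not two) is what makes the bookkeeping tractable.
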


Before presenting the proof of Theorem \ref{power7thm}, we consider the following weight pattern
\begin{equation}\label{patternB}
B=\begin{bmatrix}
a&b&c&d\\
b&a&d&c\\
d&c&b&a\\
c&d&a&b
\end{bmatrix},
\end{equation}
where $a,b,c$ and $d$ are four positive numbers. The matching generating function of $AD_n(\wt_B)$ is obtained by the theorem below.
\begin{theorem}\label{7gen}  For any nonnegative integer $k$
\begin{align}
\M(AD_{4k}(\wt_B))&=P^{k(2k+1)}(ab+cd)^{k(2k-1)},\notag\\
\M(AD_{4k+1}(\wt_B))&=(a^2+b^2)\, P^{k(2k+2)}(ab+cd)^{2k^2},\notag\\
\M(AD_{4k+2}(\wt_B))&=P^{k(2k+3)+1}(ab+cd)^{k(2k+1)},\notag\\
\M(AD_{4k+3}(\wt_B))&=P^{k(2k+4)+2}(ab+cd)^{k(2k+2)},\notag
\end{align}
where $P=ab(c^2+d^2)^2+cd(a^2+b^2)^2$.
\end{theorem}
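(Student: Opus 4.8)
The plan is to follow the strategy of the proofs of Theorem \ref{weighted} and Lemma \ref{ziglem}: iterate the operator $\de$ together with the Reduction Theorem \ref{reduction} until the weight pattern returns to a scalar multiple of itself, and then solve the resulting recurrence in $n$. Throughout I write $p=a^2+b^2$, $q=c^2+d^2$.

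First I would compute $\de(B)$ and $\de^{(2)}(B)$ explicitly. A direct calculation shows that $\de^{(2)}(B)$ is again (a scalar multiple of) a pattern of the form (\ref{patternB}): indeed
\[
\de^{(2)}(B)=\frac{pq}{P}\begin{bmatrix}
qa&qb&pc&pd\\ qb&qa&pd&pc\\ pd&pc&qb&qa\\ pc&pd&qa&qb
\end{bmatrix},
\]
which is $\tfrac{pq}{P}$ times the pattern (\ref{patternB}) with the substituted parameters $(a',b',c',d')=(qa,qb,pc,pd)$. Applying $\de^{(2)}$ once more -- with the new values $p'=q^2p$, $q'=p^2q$ -- and using that $\de(\lambda M)=\lambda^{-1}\de(M)$ to strip scalars, one finds
\[
\de^{(4)}(B)=\frac{(a^2+b^2)^2(c^2+d^2)^2}{P(ab+cd)}\,B .
\]
Along the way I would record the cell-factors at each of the four stages. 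The cells of $AD_n(\wt_B)$ have cell-factor $a^2+b^2$ or $c^2+d^2$ according to a checkerboard pattern (the $(1,1)$-cell being of the first type); the cells of $AD_{n-1}(\wt_{\de(B)})$ are \emph{all} equal to $P/(p^2q^2)$; the cells of $AD_{n-2}(\wt_{\de^{(2)}(B)})$ are again checkerboard (now $p^3q^4/P^2$ and $p^4q^3/P^2$); and the cells of $AD_{n-3}(\wt_{\de^{(3)}(B)})$ are all equal to $P^2(ab+cd)/(p^4q^4)$.

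Next I would apply the Reduction Theorem \ref{reduction} four consecutive times (legitimate since all these cell-factors are positive), and then strip the overall scalar $\kappa:=(a^2+b^2)^2(c^2+d^2)^2/(P(ab+cd))$ off $\de^{(4)}(B)$ using Lemma \ref{matrixstar}, which contributes a factor $\kappa^{(n-3)(n-4)}$. Collecting the four cell-factor products together with $\kappa^{(n-3)(n-4)}$ and simplifying the exponents, one checks that the powers of $a^2+b^2$ and of $c^2+d^2$ cancel completely -- and, agreeably, this cancellation holds regardless of the parity of $n$, because in the checkerboard stages the two cell-counts $\lceil m^2/2\rceil$ and $\lfloor m^2/2\rfloor$ enter in a balanced way. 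The outcome is the clean recurrence
\[
\M(AD_n(\wt_B))=P^{\,n-1}\,(ab+cd)^{\,n-3}\,\M(AD_{n-4}(\wt_B)),\qquad n\ge 4 .
\]

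Finally I would evaluate the four base cases directly from the Reduction Theorem: $\M(AD_0(\wt_B))=1$, $\M(AD_1(\wt_B))=a^2+b^2$, $\M(AD_2(\wt_B))=P$, $\M(AD_3(\wt_B))=P^2$. Iterating the recurrence within each residue class of $n$ modulo $4$ and summing the resulting arithmetic progressions of exponents $\sum(4j-1)$ and $\sum(4j-3)$ then produces the four stated formulas. I expect the principal obstacle to be the bookkeeping in computing $\de^{(2)}(B)$ (keeping the column/row cyclic shifts in $\de$ straight) and, above all, in verifying that the accumulated powers of $a^2+b^2$ and $c^2+d^2$ coming from the four cell-factor products and from $\kappa^{(n-3)(n-4)}$ really do vanish identically, independently of parity; once that is confirmed, the remaining induction is routine.
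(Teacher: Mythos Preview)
Your proposal is correct and follows essentially the same strategy as the paper: iterate $\de$ until the pattern returns to a scalar multiple of $B$, apply the Reduction Theorem, strip the scalar, and solve the resulting step-4 recurrence with the four base values. The only organizational difference is that the paper stops after \emph{two} applications of $\de$, recognizing $\de^{(2)}(B)=\Delta_1^{-1}r(B)$ for an involutive operator $r$ (sending $a\mapsto a/(a^2+b^2)$, $c\mapsto c/(c^2+d^2)$, etc.), and then composes two steps of the resulting 2-step recurrence; you instead compute $\de^{(4)}(B)=\kappa B$ directly. Your observation that the recurrence $\M(AD_n)=P^{n-1}(ab+cd)^{n-3}\M(AD_{n-4})$ is uniform in $n$ is a small streamlining of the paper's separate even/odd derivations, and the exponent bookkeeping you flag as the main hazard is indeed where the paper's computation resides.
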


\begin{proof}
Assume that $m=2n$, for some positive integer $n$. The cell-factors (of cells) in $AD_{2n}(\wt_B)$ are either $(a^2+b^2)$ or $(c^2+d^2)$, so Reduction Theorem implies
\begin{equation}\label{7gen1}
\M(AD_{2n}(\wt_B))=(a^2+b^2)^{2n^2}(c^2+d^2)^{2n^2}\M(AD_{2n-1}(\wt_{d(B)})),
\end{equation}
where
\[\de(B)=\begin{bmatrix}
a/(a^2+b^2)&d/(c^2+d^2)&c/(c^2+d^2)&b/(b^2+a^2)\\
c/(c^2+d^2)&b/(a^2+b^2)&a/(a^2+b^2)&d/(c^2+d^2)\\
d/(c^2+d^2)&a/(a^2+b^2)&b/(a^2+b^2)&c/(c^2+d^2)\\
b/(a^2+b^2)&c/(c^2+d^2)&d/(c^2+d^2)&a/(a^2+b^2)
\end{bmatrix}.\]
Since the cell-factors in $AD_{2n-1}(\wt_{d(B)})$ are all $\Delta_1:=\left(ab/(a^2+b^2)^2+cd/(c^2+d^2)^2\right)$, we apply the Reduction Theorem again and obtain
\begin{equation}\label{7gen2}
\M(AD_{2n-1}(\wt_{\de(B)}))=\Delta_1^{(2n-1)^2}\M(AD_{2n-2}(\wt_{\de^{(2)}(B)})),
\end{equation}
where $\de^{(2)}(B)=\frac{1}{\Delta_1}\,C$ with
\[C=\begin{bmatrix}
a/(a^2+b^2)&b/(a^2+b^2)&c/(c^2+d^2)&d/(c^2+d^2)\\
b/(a^2+b^2)&a/(a^2+b^2)&d/(c^2+d^2)&c/(c^2+d^2)\\
d/(c^2+d^2)&c/(c^2+d^2)&b/(a^2+b^2)&a/(a^2+b^2)\\
c/(c^2+d^2)&d/(c^2+d^2)&a/(a^2+b^2)&b/(a^2+b^2)
\end{bmatrix}.\]
Therefore, the following equality follows from Lemma \ref{mulstar}
\begin{equation}\label{7gen3}
\M(AD_{2n-2}(\wt_{\de^{(2)}(B)}))=\Delta_1^{-(2n-1)(2n-2)}\M(AD_{2n-2}(\wt_{C})).
\end{equation}

We define an operator $r$ on the space of $4\times 4$ matrices by setting
\begin{equation}
r\left(\begin{bmatrix}
a&b&c&d\\
b&a&d&c\\
d&c&b&a\\
c&d&a&b
\end{bmatrix}\right):=\begin{bmatrix}
a/(a^2+b^2)&b/(a^2+b^2)&c/(c^2+d^2)&d/(c^2+d^2)\\
b/(a^2+b^2)&a/(a^2+b^2)&d/(c^2+d^2)&c/(c^2+d^2)\\
d/(c^2+d^2)&c/(c^2+d^2)&b/(a^2+b^2)&a/(a^2+b^2)\\
c/(c^2+d^2)&d/(c^2+d^2)&a/(a^2+b^2)&b/(a^2+b^2)
\end{bmatrix}.
\end{equation}
One readily sees  $r(B)=C$ and  $r^{(2)}(B)=B$. By (\ref{7gen1}), (\ref{7gen2}) and (\ref{7gen3})
 \begin{equation}\label{7gen4}
 \M(AD_{2n}(\wt_B))=(a^2+b^2)^{2n^2}(c^2+d^2)^{2n^2}\Delta_1^{2n-1}\M(AD_{2n-2}(\wt_{r(B)})).
 \end{equation}
We apply the recurrence  (\ref{7gen4}) to $AD_{2n-2}(\wt_{r(B)})$ and obtain
\begin{equation}\label{7gen5}
\M(AD_{2n-2}(\wt_{r(B)}))=(a^2+b^2)^{-2(n-1)^2}(c^2+d^2)^{-2(n-1)^2}\Delta_2^{2n-3}\M(AD_{2n-4}(\wt_{B})),
\end{equation}
where $\Delta_2:=ab+cd$.  Two equalities (\ref{7gen4}) and (\ref{7gen5}) imply
\begin{align}\label{power7eq1}
\frac{\M(AD_{2n}(\wt_B))}{\M(AD_{2n-4}(\wt_B))}&=(a^2+b^2)^{2(2n-1)}(c^2+d^2)^{2(2n-1)}\Delta_1^{2n-1}(ab+cd)^{2n-3}\notag\\
&=P^{2n-1}(ab+cd)^{2n-3}.
\end{align}
Do similarly for the case $m=2n+1$, we get
\begin{equation}\label{power7eq2}
\frac{\M(AD_{2n+1}(\wt_B))}{\M(AD_{2n-3}(\wt_B))}=P^{2n}(ab+cd)^{2n-2}.
\end{equation}
The theorem follows from the two recurrences (\ref{power7eq1}) and (\ref{power7eq2}).
  \end{proof}
\begin{proof}[Proof of Theorem \ref{power7thm}]
 Apply the replacement in Spider Lemma \ref{spider}(a) to all  shaded cells in the dual graph of $S^{(1)}_m$ (see the right picture in Figure \ref{Fig23}). Then apply Edge-replacing Lemma \ref{ER} to all multiple edges arising from the previous step. This process gives us a graph isomorphic to the weighted Aztec diamond of order $m$ with weight pattern
\[B_1=\begin{bmatrix}
3/2&1/2&1&1\\
1/2&3/2&1&1\\
1&1&1/2&3/2\\
1&1&3/2&1/2
\end{bmatrix}.
\]

One readily sees that the number of shaded cells is $2n^2$ if $m=2n$, and is $(n+1)^2+n^2$ if $m=2n+1$. Thus, by Lemmas \ref{ER} and \ref{spider}, we get
\begin{equation}\label{p7eq1}
\frac{\T(S^{(1)}_m)}{\M(AD_{m}(\wt_{B_1}))}=\begin{cases}
2^{2n^2} &\text{if $m=2n$;}\\
2^{(n+1)^2+n^2}&\text{if $m=2n+1$.}\end{cases}
\end{equation}
The theorem follows from Theorem \ref{7gen} (for $a=3/2$, $b=1/2$ and $c=d=1$) and (\ref{p7eq1}).
  \end{proof}

\begin{figure}\centering
\includegraphics[width=0.65\textwidth]{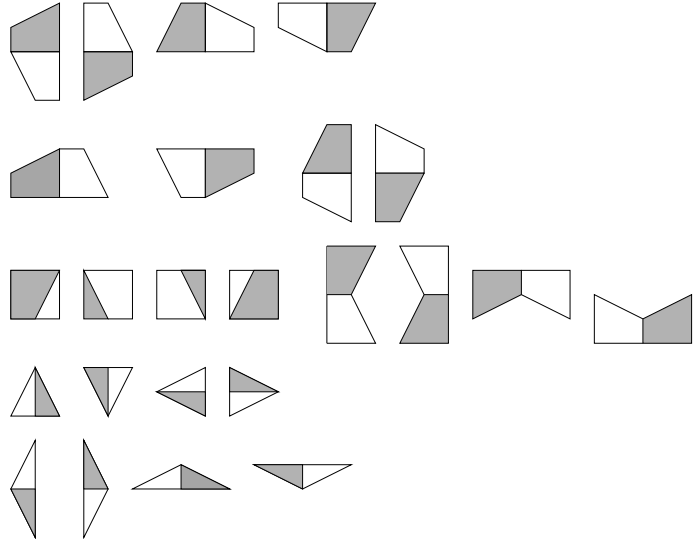}
\caption{All possible types of tiles in $S^{(1)}_m$.}
\label{Fig24}
\end{figure}

Similarly to what we did for the generalized fortress $F(d_1,d_2,\dotsc,d_m)$, we can assign weights to the tiles of the region $S^{(1)}_m$ as follows.  All tiles on the first row in Figure \ref{Fig24} are weighted by $c$, all tiles on the second row are weighted by $d$, the tiles on the third row have weight 1, the tiles on the fourth row have weight $\frac{b}{(a-1)^2+b^2}$, finally all tiles on the last row are weighted by $\frac{a-1}{(a-1)^2+b^2}$, where $a,b,c,d>0$. After applying Spider Lemma to all shaded cells as in the proof of Theorem \ref{power7thm}, we get a graph isomorphic to $AD_m(\wt_B)$, where $B$ is defined by (\ref{patternB}). Thus, we have the following equality instead of (\ref{p7eq1})
\begin{equation}\label{p7eq1'}
\frac{\T(S^{(1)}_m)}{\M(AD_{m}(\wt_{B}))}=\begin{cases}
\left(\frac{1}{(a-1)^2+b^2}\right)^{2n^2} &\text{if $m=2n$;}\\
\left(\frac{1}{(a-1)^2+b^2}\right)^{(n+1)^2+n^2}&\text{if $m=2n+1$.}\end{cases}
\end{equation}
By Theorem \ref{7gen} and (\ref{p7eq1'}), the tiling generating function of $S^{(1)}_m$ (with the new weight assignment to its tiles) is given by a product of several perfect powers.

\medskip
Start with the square lattice with every second diagonal drawn in (see the right picture in Figure \ref{Fig6}). Apply the first node replacement to all nodes $(4k+3,4l)$ and $(4k+1,4l+2)$, and apply the the second node replacement to all nodes $(4k+1,4l)$ and $(4k+3,4l+2)$,  for any integers $k$ and $l$. The elementary regions in the resulting lattice are all triangles. Consider two families of the variants of the fortresses on this lattice as in  Figure \ref{Fig25}.  In particular, the region of order $n$ in the first family consists of all elementary regions lying completely inside the diamond $\mathbb{D}_n(0,0)$, together with all elementary regions that are obtuse triangles  having an edge on the boundary of $\mathbb{D}_n(0,0)$. This region is denoted by $S^{(2)}_n$ (see the left picture in Figure \ref{Fig25}). The region of order $n$ in the second family consists of all elementary regions lying completely inside $\mathbb{D}_n(0,0)$, together with all elementary regions that are right isosceles triangles having hypothenuse on the boundary of $\mathbb{D}_n(0,0)$. Denote by $S^{(3)}_n$ the latter region (the region $S^{(3)}_6$ is illustrated by the right picture in Figure \ref{Fig25}).

\begin{figure}\centering
\includegraphics[width=0.90\textwidth]{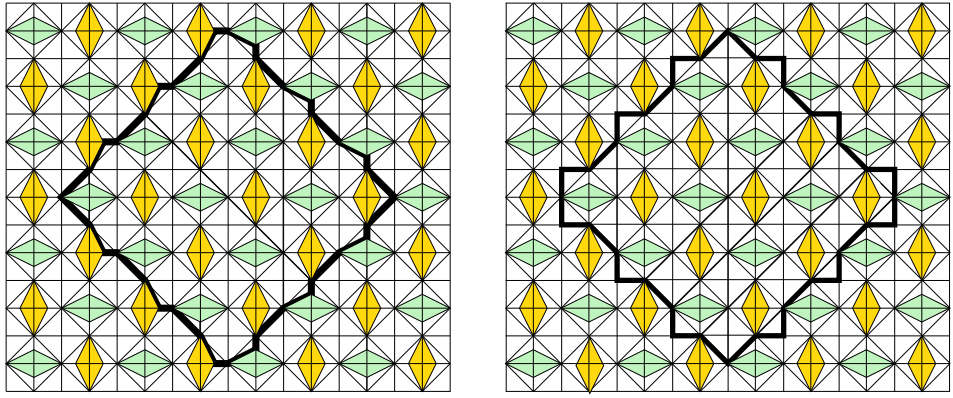}
\caption{The region $S^{(2)}_6$ (left) and the region $S^{(3)}_6$ (right).}
\label{Fig25}
\end{figure}

\begin{figure}\centering
\includegraphics[width=0.90\textwidth]{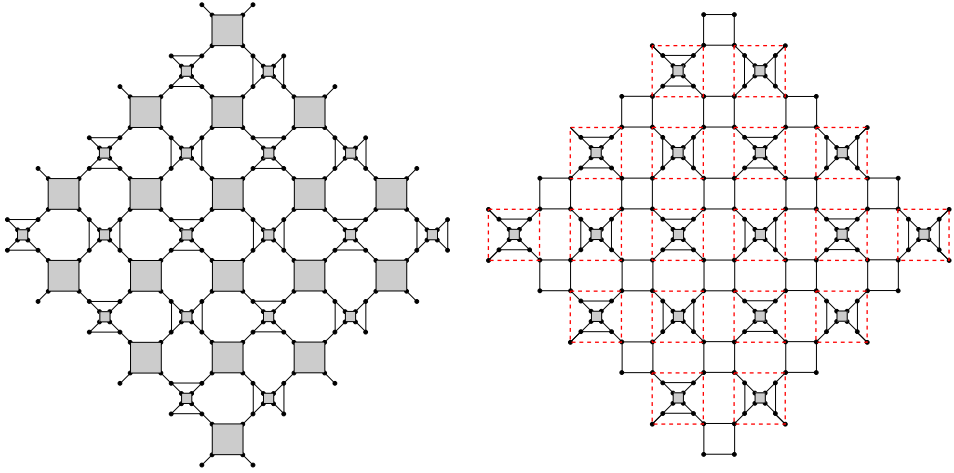}
\caption{The dual graph of $S^{(2)}_6$ (left) and the dual graph of $S^{(3)}_6$ (right).}
\label{Fig26}
\end{figure}

We get the following formulas for the numbers tilings of $S^{(2)}_n$ and $S^{(3)}_n$ by applying Theorem \ref{7gen}.

\begin{corollary}\label{power7cor}
(a) For any nonnegative integer $n$
\begin{equation}\T(S^{(2)}_{4n})=7^{n(2n+1)}2^{12n^2-2n},\  \T(S^{(2)}_{4n+1})=5\cdot7^{n(2n+2)}2^{12n^2+4n},\notag\end{equation}
\begin{equation}\T(S^{(2)}_{4n+2})=7^{n(2n+3)+1}2^{12n^2+10n+2},\ \T(S^{(2)}_{4n+3})=7^{n(2n+4)+2}2^{12n^2+16n+5}.\ \ \notag\end{equation}
(b) For any $n\geq 0$
\begin{equation}\T(S^{(3)}_{4n})=7^{n(2n-1)}2^{n(12n+2)},\ \ \T(S^{(3)}_{4n+1})=7^{2n^2}2^{n(12n+8)+1},\notag\end{equation}
\begin{equation}\T(S^{(3)}_{4n+2})=7^{n(2n+1)}2^{n(12n+14)+4},\ \ \T(S^{(3)}_{4n+3})=5\cdot7^{2n(n+1)}2^{n(12n+20)+8}.\notag\end{equation}
\end{corollary}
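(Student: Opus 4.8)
The plan is to follow the template of the proof of Theorem \ref{power7thm} essentially verbatim, the only genuinely new ingredient being the identification of the two deformed dual graphs. Concretely: (i) reduce the dual graph of each region, using the local moves of Section \ref{sec2}, to a weighted Aztec diamond whose weight pattern is (\ref{patternB}) specialized to $a=3/2$, $b=1/2$, $c=d=1/2$; then (ii) read off the answer from Theorem \ref{7gen}. For this specialization one has $ab+cd=1$, $a^2+b^2=5/2$, and
\[
P=ab(c^2+d^2)^2+cd(a^2+b^2)^2=\tfrac34\cdot\tfrac14+1\cdot\tfrac{25}{16}=\tfrac74,
\]
so Theorem \ref{7gen} evaluates $\M(AD_m(\wt_B))$ as a power of $\tfrac74$, multiplied by an extra factor $\tfrac52$ exactly when $m\equiv 1\pmod 4$. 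Both shapes reappear in the claimed formulas once the powers of $4$ are cleared against the power of $2$ produced by the reduction, and the lone factor $5$ in $\T(S^{(2)}_{4n+1})$ and in $\T(S^{(3)}_{4n+3})$ is precisely that $\tfrac52$.

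For part (a) I would start from the dual graph of $S^{(2)}_n$ (left of Figure \ref{Fig26}), apply the urban renewal of Spider Lemma \ref{spider}(a) to every shaded $4$-cycle, and then clean up the parallel edges it creates with the Edge-replacing Lemma \ref{ER}. Since the obtuse-triangle boundary of $S^{(2)}_n$ leaves no dangling forced edges, a local inspection should show that the outcome is exactly $AD_n(\wt_B)$ for the pattern $B$ above, the weights $3/2$ arising from a merge $\tfrac12+1$ and the weights $1/2$ from unmerged new edges. Each shaded cell has all four edges of weight $1$, hence cell-factor $2$, and there are $n^2$ of them, so Lemmas \ref{ER} and \ref{spider} give $\T(S^{(2)}_n)=2^{n^2}\M(AD_n(\wt_B))$. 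Plugging in the four cases of Theorem \ref{7gen} at order $n$ and simplifying $2^{n^2}(7/4)^{\bullet}$ (together with the extra $5/2$ when $n\equiv 1\pmod 4$) produces the four formulas for $\T(S^{(2)}_n)$; the values $n=0,1$ I would check by hand against the Reduction Theorem.

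For part (b) the same moves apply to the dual graph of $S^{(3)}_n$ (right of Figure \ref{Fig26}), except that the right-isosceles-triangle boundary now contributes a collar of cells which, after the Spider replacement, degenerates into chains of weight-$1$ forced edges; deleting these should leave a copy of $AD_{n-2}(\wt_B)$ with the same pattern $B$. The number of shaded cells treated is again $n^2$, each of cell-factor $2$, giving $\T(S^{(3)}_n)=2^{n^2}\M(AD_{n-2}(\wt_B))$ for $n\ge 2$ (the cases $n=0,1$ are trivial). Applying Theorem \ref{7gen} at order $n-2$ --- note $n\equiv 3\pmod 4$ makes $n-2\equiv 1\pmod 4$, which is where the $5/2$ enters --- and clearing the $4$'s yields the formulas for $\T(S^{(3)}_n)$.

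I expect the one delicate point to be the local identification in these two reductions: verifying that the deformed graphs are precisely $AD_n(\wt_B)$ and $AD_{n-2}(\wt_B)$ for that single pattern $B$ (so that every merged edge weight really is $1/2$, $1$, or $3/2$), pinning down the collar of forced edges responsible for the order drop of $2$ in the $S^{(3)}$ case, and counting the shaded cells correctly in each residue class mod $4$. Everything downstream --- substitution into Theorem \ref{7gen}, collecting powers of $7$, $5$ and $2$, and checking the small orders --- is purely mechanical.
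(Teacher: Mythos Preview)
Your treatment of part (a) is essentially the paper's own argument: after Spider/Edge-replacing, the dual graph of $S^{(2)}_n$ becomes $AD_n$ with the weight pattern of (\ref{patternB}) specialized to $a=1/2$, $b=3/2$, $c=d=1/2$ (your choice $(a,b)=(3/2,1/2)$ is the same, by the $a\leftrightarrow b$ symmetry of the quantities in Theorem \ref{7gen}), and there are exactly $n^2$ shaded cells of cell-factor $2$.

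Part (b), however, does not go as you sketch. In the dual graph of $S^{(3)}_n$ the shaded cells are \emph{not} $n^2$ in number but only about half that --- $2k^2$ when $n=2k$ and $(k+1)^2+k^2$ when $n=2k+1$ --- and after one pass of Spider/Edge-replacing the boundary does \emph{not} collapse into forced edges. Instead, each shaded cell gives rise to a \emph{new} $4$-cycle carrying weights $1/2,3/2,1/2,3/2$ in cyclic order (cell-factor $5/2$), and a second application of Spider Lemma to these new cells is required. The outcome is $AD_n$ (no order drop) with the different pattern
\[
B_3=\begin{bmatrix}1/5&3/5&1&1\\3/5&1/5&1&1\\1&1&3/5&1/5\\1&1&1/5&3/5\end{bmatrix},
\]
and one finishes via Theorem \ref{7gen} with $a=1/5$, $b=3/5$, $c=d=1$. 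Your identity $\T(S^{(3)}_n)=2^{n^2}\M(AD_{n-2}(\wt_B))$ does reproduce the correct numbers, but that is because the two specializations of Theorem \ref{7gen} are algebraically related, not because the graph reduction you describe actually takes place. So the geometric step --- which you yourself flagged as the delicate point --- needs to be redone along the lines above.
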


\begin{proof}
(a) Consider the dual graph of $S^{(2)}_n$ (see the left picture in Figure \ref{Fig26}). We do again the process in the proof of Theorem \ref{power7thm}. In particular,  we apply the replacement in Spider Lemma \ref{spider}(a) to all $n^2$ shaded cells, and replace all multiple edges by corresponding single edges in the resulting graph using Lemma \ref{ER}. We get a graph isomorphic to the weighted Aztec diamond graph $AD_n(\wt_{B_2})$, where
\[B_2=\begin{bmatrix}
1/2&3/2&1/2&1/2\\
3/2&1/2&1/2&1/2\\
1/2&1/2&3/2&1/2\\
1/2&1/2&1/2&3/2
\end{bmatrix}.\]
By Lemmas \ref{ER} and \ref{spider}, we obtain
  \begin{equation}\label{corpower7}
  \T(S^{(2)}_{n})=2^{n^2}\M(AD_{n}(\wt_{B_2})).
  \end{equation}
We get the statement by applying  Theorem \ref{7gen} (for $a=1/2$,  $b=3/2$ and $c=d=1/2$) and (\ref{corpower7}).

(b) Consider the dual graph of $S^{(3)}_n$ (illustrated by the right picture in Figure \ref{Fig26}). Each of the shaded cells gives us a chance to apply Spider Lemma \ref{spider}(a). After replacing all multiple edges (arising from the replacement of the Spider Lemma) by single edges as in Lemma \ref{ER}, we have another chance to apply the Spider Lemma \ref{spider}(a) to a new cell. The cell in the first application of Spider Lemma has cell-factor 2 (it has all edges weighted by 1), the cell in the second application of Spider Lemma has cell-factor $5/2$ (it has edges with weights $1/2,$ $3/2,$ $1/2,$ $3/2$ in cyclic order). The resulting graph is exactly a version of the weighted Aztec diamond $AD_n(\wt_{B_3})$, and
\begin{equation}\label{corpower7'}
\frac{\T(S^{(3)}_n)}{\M(AD_{n}(\wt_{B_3}))}=
\begin{cases}
2^{2k^2}(5/2)^{2k^2} &\text{if $n=2k$;}\\
2^{(k+1)^2+k^2}(5/2)^{(k+1)^2+k^2} &\text{if $n=2k+1$,}
\end{cases}\end{equation}
 where
\[B_3=\begin{bmatrix}
1/5&3/5&1&1\\
3/5&1/5&1&1\\
1&1&3/5&1/5\\
1&1&1/5&3/5
\end{bmatrix}.\]
Finally, Theorem \ref{7gen} (for $a=1/5$, $b=3/5$ and $c=d=1$) and (\ref{corpower7'}) imply part (b).
  \end{proof}

\begin{figure}\centering
\includegraphics[width=0.90\textwidth]{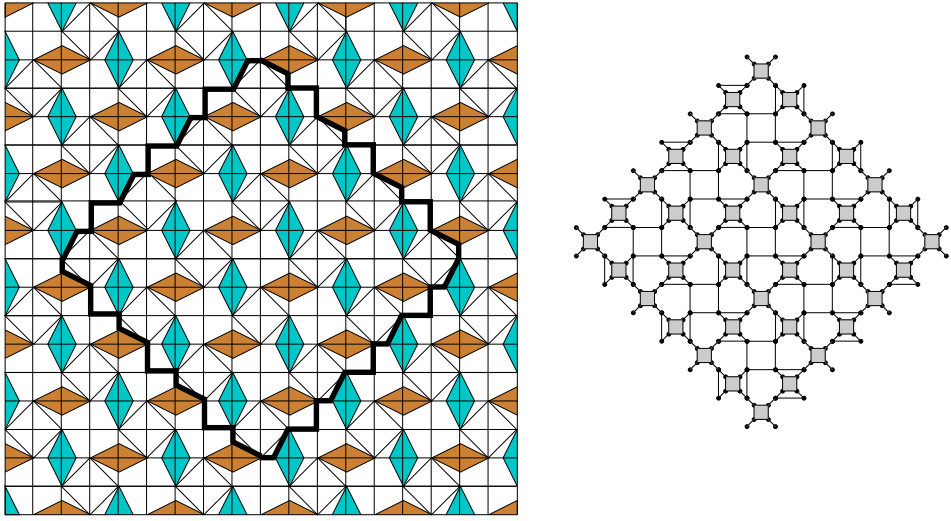}
\caption{The region $S^{(4)}_7$ (left) and its dual graph (right).}
\label{Fig27}
\end{figure}

We modify the lattice in the definition of $S^{(2)}_n$ and $S^{(3)}_n$ by removing the boundaries of all unit diamonds $\mathbb{D}_1(4k,4l+1)$ and $\mathbb{D}_1(4k+2,4l+3)$, for any two integer numbers $k$ and $l$. We are interested in a new family of regions on the resulting lattice defined as follows. The region of order $n$ consists of all elementary regions lying entirely or partially inside the diamond $\mathbb{D}_n(1,1)$ together with all elementary regions that have an edge resting on the boundary of  $\mathbb{D}_n(1,1)$ (the region of order $7$ is shown by the left picture in Figure \ref{Fig27}). Denote by $S^{(4)}_n$ the region of order $n$. The number tilings of $S^{(4)}_n$ can be obtained by the following theorem.

\begin{theorem}\label{p31}For any nonnegative integer $n$

\begin{align}
\T(S^{(4)}_{4n})&=2^{n(2n-1)}5^{n(2n-1)}31^{n(2n+1)},\notag\\
\T(S^{(4)}_{4n+1})&=2^{2n^2}5^{2n^2}31^{2n(n+1)},\notag\\
\T(S^{(4)}_{4n+2})&=2^{n(2n+1)}5^{n(2n+1)}31^{(n+1)(2n+1)},\notag\\
\T(S^{(4)}_{4n+3})&=2^{2n(n+1)}5^{2n(n+1)}31^{2(n+1)^2}.\notag\end{align}
\end{theorem}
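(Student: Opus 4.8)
The plan is to repeat, on the lattice of $S^{(4)}_n$, the reduction already used for Theorem~\ref{power7thm} and Corollary~\ref{power7cor}: by a sequence of local subgraph replacements, transform the dual graph of $S^{(4)}_n$ into a weighted Aztec diamond graph $AD_n(\wt_B)$ whose weight pattern $B$ is of the form (\ref{patternB}), and then read off $\T(S^{(4)}_n)$ from Theorem~\ref{7gen}, keeping track of the multiplicative constants produced along the way.

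Concretely, I would start from the dual graph of $S^{(4)}_n$ (drawn for $n=7$ in Figure~\ref{Fig27}) and locate, as before, the shaded $4$-cycles on which urban renewal is to be performed, paying special attention to the new faces created near the deleted unit diamonds $\mathbb{D}_1(4k,4l+1)$ and $\mathbb{D}_1(4k+2,4l+3)$. I expect those extra faces to be absorbed, just as the second round of urban renewal was absorbed in the proof of Corollary~\ref{power7cor}(b), by applying first the Vertex-splitting Lemma~\ref{VS} (or the variants (b), (c) of the Spider Lemma~\ref{spider}) and then Spider Lemma~\ref{spider}(a), after which the resulting multiple edges are merged via the Edge-replacing Lemma~\ref{ER}; if the pattern thus obtained is not literally of the form (\ref{patternB}), one rescales rows and columns by Lemma~\ref{matrixstar} or Lemma~\ref{mulstar} to bring it there, at the harmless cost of introducing (possibly irrational) entries $a,b,c,d$. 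Each replacement supplies a known factor, so in the end one gets an identity $\T(S^{(4)}_n)=\kappa_n\,\M(AD_n(\wt_B))$, where $\kappa_n$ is an explicit product of fixed numbers raised to cell counts; as in (\ref{p7eq1}) and (\ref{corpower7'}), those cell counts depend only on the parity of $n$, which upon splitting produces the four residue classes modulo $4$.

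With $B$ identified, one computes $P=ab(c^2+d^2)^2+cd(a^2+b^2)^2$ and $ab+cd$; after clearing denominators against $\kappa_n$ these should encode exactly the primes $31$, $2$ and $5$ appearing in the statement, in the same way that $B_1$ gives $P=37/4$, $ab+cd=7/4$ for $S^{(1)}_m$, and $B_3$ gives $P=16/25$, $ab+cd=28/25$ for $S^{(3)}_n$. I also expect $a^2+b^2=1$ for the relevant $B$, so that the lone extra factor $a^2+b^2$ appearing in the second case of Theorem~\ref{7gen} disappears, consistent with the absence of any such factor in the formula for $\T(S^{(4)}_{4n+1})$. Substituting into the four cases of Theorem~\ref{7gen} (with $k=n$ in the orders $4n$, $4n+1$, $4n+2$, $4n+3$) and multiplying by $\kappa_n$ then yields, after an elementary simplification of exponents, the four claimed expressions for $\T(S^{(4)}_n)$.

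The main obstacle is the very first step: carrying out the urban-renewal reduction on this modified lattice correctly — which faces of the dual graph become cells, in what order to apply Lemmas~\ref{VS}, \ref{ER} and \ref{spider}, what rescaling is needed and hence what weight pattern $B$ results, and then counting the cells in each residue class modulo $4$. Conceptually nothing new is involved here (it is the same toolkit already used for $S^{(1)}_m$, $S^{(2)}_n$ and $S^{(3)}_n$), but it is the delicate, computational part; once $B$ and the cell counts are pinned down, the remainder is a direct substitution into Theorem~\ref{7gen} together with routine algebra.
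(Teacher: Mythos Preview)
Your plan to reduce the dual graph of $S^{(4)}_n$ to a weighted Aztec diamond is exactly what the paper does, and the multiplicative constant it records is
\[
\frac{\T(S^{(4)}_{n})}{\M(AD_n(\wt_{B_4}))}=
\begin{cases}2^{3k^2} &\text{if $n=2k$;}\\
2^{(k+1)(3k+1)} &\text{if $n=2k+1$,}
\end{cases}
\qquad
B_4=\begin{bmatrix}
1/2&1/2&1/2&3/2\\
1/2&1/2&3/2&1/2\\
3/2&1/2&1&1\\
1/2&3/2&1&1
\end{bmatrix}.
\]
The gap is in the second half of your argument: $B_4$ is \emph{not} of the form~(\ref{patternB}), and it cannot be brought to that form by the rescalings of Lemma~\ref{mulstar} or Lemma~\ref{matrixstar}. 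A quick way to see the obstruction is to look at cell-factors: any pattern of type~(\ref{patternB}) has only two cell-factors, $a^2+b^2$ and $c^2+d^2$, arranged in a checkerboard, whereas $B_4$ has three distinct cell-factors $1/2$, $5/2$, $2$. Equivalently, one checks directly that $\de^{(2)}(B_4)$ is not a scalar multiple of any matrix of type~(\ref{patternB}); the ``period $2$ under $\de$ up to the involution $r$'' that drives Theorem~\ref{7gen} simply fails here, so Theorem~\ref{7gen} is not applicable.

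What the paper does instead is to compute $\de(B_4),\de^{(2)}(B_4),\de^{(3)}(B_4)$ explicitly and observe that $\de^{(4)}(B_4)=\frac{40}{31}\,B_4$. Four applications of the Reduction Theorem together with Lemma~\ref{mulstar} then give closed recurrences of step $4$ in the Aztec order,
\[
\frac{\M(AD_{2n}(\wt_{B_4}))}{\M(AD_{2n-4}(\wt_{B_4}))}=2^{-10n+9}5^{2n-3}31^{2n-1},\qquad
\frac{\M(AD_{2n+1}(\wt_{B_4}))}{\M(AD_{2n-3}(\wt_{B_4}))}=2^{-10n+2}5^{2n-2}31^{2n},
\]
and these, combined with the urban-renewal factor above and the small initial values, yield the stated formulas. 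So the missing idea in your proposal is precisely this period-$4$ computation; the shortcut through Theorem~\ref{7gen} is not available for this lattice.
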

\begin{proof}
Apply the replacement in Spider Lemma \ref{spider}(a) to all shaded cells in the dual graph of $S^{(4)}_n$ (see the right picture in Figure \ref{Fig27}), and apply Lemma \ref{ER} to all multiple edges. We get
\begin{equation}\label{p31eq1}
\frac{\T(S^{(4)}_{n})}{\M(AD_n(\wt_{B_4}))}=
\begin{cases}2^{3k^2} &\text{if $n=2k$;}\\
2^{(k+1)(3k+1)} &\text{if $n=2k+1$,}
\end{cases}
\end{equation}
where
\[B_4=\begin{bmatrix}
1/2&1/2&1/2&3/2\\
1/2&1/2&3/2&1/2\\
3/2&1/2&1&1\\
1/2&3/2&1&1
\end{bmatrix}.\]

One verifies easily that
\begin{equation}\de(B_4)=\begin{bmatrix}
1&3/5&1/5&1\\
1/5&1/2&1/2&3/5\\
3/5&1/2&1/2&1/5\\
1&1/5&3/5&1
\end{bmatrix},\notag\end{equation}
\begin{equation}\de^{(2)}(B_4)=\begin{bmatrix}
50/31&50/31&10/31&30/31\\
50/31&50/31&30/31&10/31\\
30/31&10/31&25/31&25/31\\
10/31&30/31&25/31&25/31
\end{bmatrix},\notag\end{equation}
\begin{equation}\de^{(3)}(B_4)=\begin{bmatrix}
31/100&93/100&31/100&31/100\\
31/100&31/50&31/50&93/100\\
93/100&31/50&31/50&31/100\\
31/100&31/100&93/100&31/100
\end{bmatrix},\notag\end{equation}
\begin{equation}\de^{(3)}(B_4)=\begin{bmatrix}
31/100&93/100&31/100&31/100\\
31/100&31/50&31/50&93/100\\
93/100&31/50&31/50&31/100\\
31/100&31/100&93/100&31/100
\end{bmatrix},\notag\end{equation}
and
\begin{equation}\label{p31m}\de^{(4)}(B_4)=\frac{40}{31}B_4.\end{equation}

By counting the cell-factors and applying Reduction Theorem \ref{reduction}, we get the following four equalities:
\begin{equation}\label{p31eq2}
\frac{\M(AD_{2n}(\wt_{B_4}))}{\M(AD_{2n-1}(\wt_{\de(B_4)}))}=(1/2)^{n^2}2^{n^2}(5/2)^{2n^2},
\end{equation}
\begin{equation}\label{p31eq3}
\frac{\M(AD_{2n-1}(\wt_{\de(B_4)}))}{\M(AD_{2n-2}(\wt_{\de^{(2)}(B_4)}))}=(31/50)^{(2n-1)^2},
\end{equation}
\begin{equation}\label{p31eq4}
\frac{\M(AD_{2n-2}(\wt_{\de^{(2)}(B_4)}))}{\M(AD_{2n-3}(\wt_{\de^{(3)}(B_4)}))}=\left(\frac{50^2}{2\cdot 31^2}\right)^{(n-1)^2}\left(\frac{2\cdot50^2}{31^2}\right)^{(n-1)^2}\left(\frac{2\cdot50^2}{31^2}\right)^{2(n-1)^2},
\end{equation}
\begin{equation}\label{p31eq5}
\frac{\M(AD_{2n-3}(\wt_{\de^{(3)}(B_4)}))}{\M(AD_{2n-4}(\wt_{\de^{(4)}(B_4)}))}=(5\cdot(31/100)^2)^{(2n-3)^2}.
\end{equation}
Lemma \ref{mulstar} and equality (\ref{p31m}) imply
\begin{equation}\label{p31eq6}
\frac{\M(AD_{2n-4}(\wt_{\de^{(4)}(B_4)}))}{\M(AD_{2n-4}(\wt_{B_4}))}=(40/31)^{(2n-3)(2n-4)}.
\end{equation}
By the five equalities (\ref{p31eq2})--(\ref{p31eq6}) above, we obtain the following recurrence
\begin{equation}\label{p31eq7}
\frac{\M(AD_{2n}(\wt_{B_4}))}{\M(AD_{2n-4}(\wt_{B_4}))}=2^{-10n+9}5^{2n-3}31^{2n-1}.
\end{equation}
In the same fashion,  we get a similar recurrence for the odd-order Aztec diamond graphs
\begin{equation}\label{p31eq8}
\frac{\M(AD_{2n+1}(\wt_{B_4}))}{\M(AD_{2n-3}(\wt_{B_4}))}=2^{-10n+2}5^{2n-2}31^{2n}.
\end{equation}
Finally, the theorem follows from two recurrences (\ref{p31eq7}) and (\ref{p31eq8}) together with (\ref{p31eq1}).
  \end{proof}

Our next target is to create a new lattice by periodically applying the node replacements with a more complicated period.  We start with the square lattice with all second diagonals drawn in (see the right picture in Figure \ref{Fig6}). Apply the first node replacement to all nodes $(8k+5,4l)$, $(8k+7,4l)$, $(8k,4l+1)$, $(8k+2,4l+1)$, $(8k+3,4l+2)$, $(8k+5,4l+2)$, $(8k+6,4l+3)$ and $(8k,4l+3)$, for any two integer numbers $k$ and $l$; and apply the second node replacement to all remaining nodes which have the $x$- and $y$-coordinates of different parity. Next, we consider a variant of fortress on the resulting lattice. The region of order $n$ is defined similarly to the regions $F_n$ and $S^{(3)}_n$ based on the diamond $\mathbb{D}_n(0,0)$ (illustrated by Figure \ref{Fig28}). Denote by $Q_n$ the region of order $n$. We will show in the following theorem that the number tilings of $Q_n$ is (or nearly is) the product of a perfect power of $3$ and a perfect power of $29$.

\begin{figure}\centering
\includegraphics[width=0.5\textwidth]{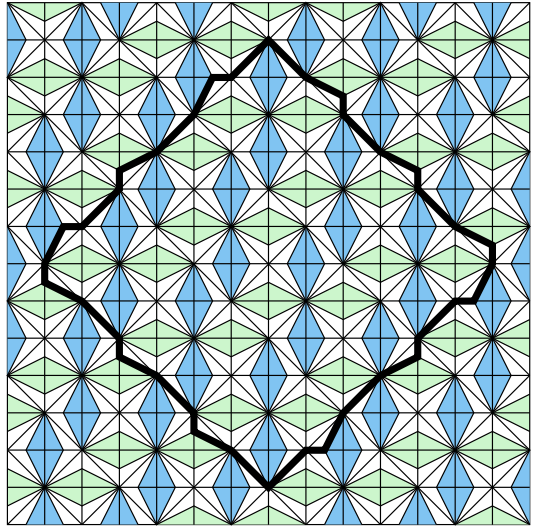}
\caption{The region $Q_6$.}
\label{Fig28}
\end{figure}

\begin{theorem}\label{power29} For  any $q\geq 0$
\begin{align}
\T(Q_{4q})&=3^{4q^2}29^{4q^2},\notag\\
\T(Q_{8q+2})&=3^{2q(8q+4)}29^{(4q+1)^2},\notag\\
\T(Q_{8q+6})&=3^{8(q+1)(2q+1)+2}29^{(4q+3)^2},\notag\\
\T(Q_{4q+1})&=2\cdot3^{2q(2q+1)}29^{2q(2q+1)},\notag\\
\T(Q_{4q+3})&=5\cdot3^{(2q+1)(2q+2)}29^{(2q+1)(2q+2)}.\notag
\end{align}
\end{theorem}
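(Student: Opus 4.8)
The plan is to follow the same strategy used for Theorems \ref{power7thm} and \ref{p31}: reduce the tiling enumeration of $Q_n$ to the matching generating function of a weighted Aztec diamond graph, and then iterate the Reduction Theorem until the weight pattern returns (up to a scalar) to its original form. First I would compute the dual graph of $Q_n$ and, applying the replacement in Spider Lemma \ref{spider}(a) to each shaded cell (and Edge-replacing Lemma \ref{ER} to the multiple edges that arise), deform it into a weighted Aztec diamond $AD_n(\wt_{E})$ for an explicit $4\times4$ weight pattern $E$ dictated by the chosen period of node replacements. As in \eqref{p7eq1} and \eqref{p31eq1}, this step produces a clean prefactor: a power of $2$ (coming from the cell-factors of the cells to which Spider Lemma was applied) whose exponent is a quadratic in $q$ depending on the residue of $n$ modulo $4$, and depending on whether $n=2k$ or $n=2k+1$ one gets the exponent $3k^2$-type or $(k+1)(3k+1)$-type contribution — here it will instead involve the period-$8$ behavior visible in the statement (the split of the $n\equiv 2\pmod 4$ case into $8q+2$ and $8q+6$).

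Next I would analyze the orbit of $E$ under the operator $\de$. Because the node-replacement period has length $8$ in the $x$-direction, I expect $\de^{(d)}(E)=\lambda\,E$ for some period $d$ (likely $d=4$, matching the row-period $4$ in $y$, as happened in Theorem \ref{p31} where $\de^{(4)}(B_4)=\tfrac{40}{31}B_4$), with $\lambda$ a rational number whose numerator/denominator involve $3$ and $29$. I would display $\de(E),\de^{(2)}(E),\dots,\de^{(d-1)}(E)$ explicitly; at each stage all cell-factors of $AD_{n-i}(\wt_{\de^{(i)}(E)})$ should be a single value (or two values that differ by a controlled factor), so the Reduction Theorem \ref{reduction} gives $\M(AD_{n-i+1})=(\text{cell-factor})^{(n-i+1)^2}\,\M(AD_{n-i})$, possibly after one application of Lemma \ref{mulstar} to strip out the scalar $\lambda$ as a power of $\lambda^{-(n-i)(n-i-1)}$ or $\lambda^{-(n-i)}$. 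Multiplying these together yields a recurrence of the form $\M(AD_n(\wt_E))/\M(AD_{n-4}(\wt_E))=2^{\ast}3^{\ast}29^{\ast}$ with exponents linear in $q$, and separately for even and odd $n$ (cf.\ \eqref{p31eq7}–\eqref{p31eq8}). Combining this with the prefactor from the first step and the base cases $\M(AD_0),\M(AD_1),\M(AD_2),\M(AD_3)$ computed directly by the Reduction Theorem then gives all five closed forms in the statement, the extra factors $2$ and $5$ appearing exactly as the $a^2+b^2$-type cell-factors did in Theorem \ref{7gen}.

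The main obstacle will be the first step: correctly identifying the weight pattern $E$ and, more delicately, verifying that after the Spider/Edge-replacing reductions the resulting graph is \emph{exactly} a periodically weighted Aztec diamond with no leftover forced edges or boundary irregularities — the intricate $8$-periodic placement of first- versus second-node replacements means the boundary cells of $Q_n$ must be checked case by case (four residues mod $4$ for $n$, and the $8q+2$ vs $8q+6$ subtlety), which is where the period-$8$ phenomenon in the theorem really comes from. A secondary technical point is confirming that $\de^{(d)}(E)$ is a genuine scalar multiple of $E$ — if the orbit length turns out to be $8$ rather than $4$, the chain of Reduction-Theorem applications lengthens accordingly but the method is unchanged. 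Once $E$ and its $\de$-orbit are pinned down, everything else is bookkeeping of exponents, exactly parallel to the proofs of Theorems \ref{power7thm} and \ref{p31}.
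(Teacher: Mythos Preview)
Your strategy is in the right spirit, but two concrete details are off, and the paper's main proof takes a different route from the direct $\de$-orbit analysis you sketch.

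First, the weight pattern after the graph reduction is $4\times 8$, not $4\times 4$; the node-replacement scheme defining $Q_n$ has horizontal period $8$, and the resulting pattern is
\[
C_0=\begin{bmatrix}
1/5&3/5&3/2&1/2&3/5&1/5&1/2&3/2\\
3/5&1/5&1/2&3/2&1/5&3/5&3/2&1/2\\
3/2&1/2&1/5&3/5&1/2&3/2&3/5&1/5\\
1/2&3/2&3/5&1/5&3/2&1/2&1/5&3/5
\end{bmatrix}.
\]
Second, the passage from the dual graph of $Q_n$ to $AD_n(\wt_{C_0})$ is not a single pass of the Spider Lemma: to half of the cells one must apply Lemma~\ref{spider}(a) \emph{twice} (with Lemma~\ref{ER} in between), picking up cell-factors $2$ and then $5/2$, while the remaining cells need only one application with cell-factor $2$. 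This double application is what produces the entries $1/5$, $3/5$ and the prefactor $2^{2k^2}(5/2)^{2k^2}2^{2k^2}$ (for $n=2k$); the $5$'s here are the seed of the $29$'s in the final answer, so this step cannot be glossed over.

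For the second half, the paper does \emph{not} argue via $\de^{(d)}(C_0)=\lambda C_0$. Instead it first proves a separate multi-parameter result, Theorem~\ref{power29gen}, for a $4\times 2n$ pattern $C$ built from column-blocks depending on parameters $a_i,b_i,c_i,d_i$ with $\Delta_i=a_ic_i+b_id_i$; four applications of the Reduction Theorem plus Lemma~\ref{mulstar} relate $AD_{2k}(\wt_C)$ to $AD_{2k-4}(\wt_{\phi(C)})$, where $\phi$ strips the outer four columns, and iteration gives a closed product formula. Theorem~\ref{power29} then follows by specializing $(a_i,b_i,c_i,d_i)$ periodically to $(1/5,3/5,1/5,3/5)$, $(3/2,1/2,3/2,1/2)$, etc. Your direct orbit approach can also be made to work --- the paper in fact sketches it afterward for a two-parameter $4\times 8$ pattern $N$ generalizing $C_0$ --- but $\de^{(4)}(N)$ is not a scalar multiple of $N$: one needs both Lemma~\ref{mulstar} and the block-wise Lemma~\ref{matrixstar} to reach an intermediate pattern $\widetilde{N}$, and only after a second four-step cycle does one return to $N$, giving a period-$8$ recurrence $\M(AD_{4n})/\M(AD_{4n-8})$. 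So your fallback ``if the orbit length turns out to be $8$'' is the right one, but the normalization is more delicate than a single scalar stripped by Lemma~\ref{mulstar}.
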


Before presenting the proof of Theorem \ref{power29}, we consider a $4\times 2n$ weight pattern
\begin{equation}
C=\begin{bmatrix}
a_1&b_1&a_2&b_2&\dotsc&a_n&b_n\\
d_1&c_1&d_2&c_2&\dotsc&d_n&c_n\\
d_1/\Delta_1&c_1/\Delta_1&d_2/\Delta_2&c_2/\Delta_2&\dotsc&d_n/\Delta_n&c_n/\Delta_n\\
a_1/\Delta_1&b_1/\Delta_1&a_2/\Delta_2&b_2/\Delta_2&\dotsc&a_n/\Delta_n&b_n/\Delta_n\\
\end{bmatrix},
\notag\end{equation}
where $a_i,b_i,c_i,d_i$ are  positive numbers and where $\Delta_i=a_ic_i+b_id_i$, for $i=1,2,\dotsc,n$. One can write $C$ as a block matrix as follows.
\begin{equation}
C=\begin{bmatrix}
A_1&A_2&\dotsc&A_n
\end{bmatrix},
\notag\end{equation}
where
\[A_i=\begin{bmatrix}
a_i&b_i\\
d_i&c_i\\
d_i/\Delta_i&c_i/\Delta_i\\
a_i/\Delta_i&b_i/\Delta_i
\end{bmatrix}.
\]
The matching generating function of $AD_n(\wt_C)$ is given by the theorem below.

\begin{theorem}\label{power29gen}
(a) For any positive integer $k$
\begin{align}\label{29recure}
\M(AD_{2k}(\wt_{C}))=&\prod_{i=1}^ka_i^{k-i+1}b_{2k+1-i}^{k+1-i}c_{2k+1-i}^{k-i}d_i^{k-i}\notag\\
&\times\prod_{i=1}^{2k}\Delta_{i}^{-k}\cdot \prod_{i=1}^{k-1}(\Delta_i+\Delta_{i+1})^{i}\cdot \prod_{i=1}^k(\Delta_{2k-i}+\Delta_{2k-i+1})^{i}.
\end{align}

(b) For nonnegative integer $k$
\begin{align}\label{29recuro}
\M(AD_{2k+1}(\wt_{C}))=&\prod_{i=1}^{k+1}a_i^{k-i+1}b_{2k+2-i}^{k+1-i}c_{2k+2-i}^{k+1-i}d_i^{k+1-i}\notag\\
&\times\Delta_{k+1}\prod_{i=1}^{2k}\Delta_{i}^{-k}\cdot\prod_{i=1}^{k}(\Delta_i+\Delta_{i+1})^{i}\cdot\prod_{i=1}^k(\Delta_{2k+1-i}+\Delta_{2k+2-i})^{i}.
\end{align}
\end{theorem}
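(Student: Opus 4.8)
The plan is to run the same machine that drives the proofs of Theorems~\ref{weighted} and~\ref{7gen}: apply the Reduction Theorem~\ref{reduction} to $AD_n(\wt_C)$, follow the weight pattern through the iterates $\de(C),\de^{(2)}(C),\dots$, use Lemma~\ref{mulstar} to absorb the overall constants that appear along the way, and at the end read off a recurrence that lowers the order, which is then iterated down to a base pattern. The shape of $C$ is designed to make the first step transparent: the bottom block $\begin{bmatrix}d_i/\Delta_i&c_i/\Delta_i\\ a_i/\Delta_i&b_i/\Delta_i\end{bmatrix}$ is the top block $\begin{bmatrix}a_i&b_i\\ d_i&c_i\end{bmatrix}$ turned over and divided by $\Delta_i=a_ic_i+b_id_i$, so the cells of $AD_n(\wt_C)$ coming from the first two rows of $C$ have cell-factor $\Delta_i$, while those coming from its last two rows have cell-factor $1/\Delta_i$; these two kinds of rows of cells alternate down the diamond, so $\prod_c\Delta(c)$ is $1$ for even $n$ and $\prod_{i=1}^n\Delta_i$ for odd $n$, and a single application of the Reduction Theorem gives $\M(AD_n(\wt_C))=\M(AD_{n-1}(\wt_{\de(C)}))$ or $\M(AD_{n-1}(\wt_{\de(C)}))\prod_{i=1}^n\Delta_i$ according to the parity.

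First I would compute $\de(C)$ explicitly. Carrying out the $2\times2$ block replacement and the two cyclic shifts, $\de(C)$ is again $4\times2n$ and its column-pair $(2j-1,2j)$ holds $\begin{bmatrix}a_j/\Delta_j&b_{j+1}/\Delta_{j+1}\\ a_j&b_{j+1}\end{bmatrix}$ in its first two rows and $\begin{bmatrix}d_j&c_{j+1}\\ d_j/\Delta_j&c_{j+1}/\Delta_{j+1}\end{bmatrix}$ in its last two rows, the indices read cyclically. Hence the cells of $AD_{n-1}(\wt_{\de(C)})$ have cell-factors $a_jb_{j+1}(\Delta_j+\Delta_{j+1})/(\Delta_j\Delta_{j+1})$ and $d_jc_{j+1}(\Delta_j+\Delta_{j+1})/(\Delta_j\Delta_{j+1})$, which is where the sums $\Delta_i+\Delta_{i+1}$ in the statement enter. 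Next I would iterate: compute $\de^{(2)}(C),\de^{(3)}(C),\de^{(4)}(C)$, applying the Reduction Theorem at each stage and, after every second stage, a rescaling via Lemma~\ref{mulstar} that cleans the overall factor and re-expresses the result as a native weight pattern of the appropriate smaller width. One checks that after two such rescaled stages the pattern is again of the form displayed for $C$ but turned over, and after four it is right-side up again and built on the truncated, reindexed list $a_3,\dots,a_{n-2}$, $b_3,\dots,b_{n-2}$, $c_3,\dots,c_{n-2}$, $d_3,\dots,d_{n-2}$ (with the corresponding new $\Delta$'s). Combining everything yields a recurrence of the form
\[\M(AD_n(\wt_C))=\bigl(\text{an explicit monomial in the }a_i,b_i,c_i,d_i,\Delta_i,\Delta_i+\Delta_{i+1}\bigr)\,\M(AD_{n-4}(\wt_{C'})),\]
together with a companion relation between the orders $n$ and $n-2$ at the intermediate, turned-over stage. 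Iterating these down to the handful of base patterns of small order, which are computed directly from the Reduction Theorem, e.g.\ $\M(AD_0)=1$, $\M(AD_1(\wt_C))=a_1c_1+b_1d_1=\Delta_1$, $\M(AD_2(\wt_C))=a_1b_2(\Delta_1+\Delta_2)/(\Delta_1\Delta_2)$, and collapsing the telescoping products of monomial factors, gives~\eqref{29recure} and~\eqref{29recuro}.

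The substance of the argument is the bookkeeping, not the strategy. Three points need care. First, the cyclic shift inside $\de$ is directional; this is exactly what produces the asymmetry in the answer, with the $a_i,d_i$ carrying the small indices $i$ and the $b_i,c_i$ the large indices, and the shifts must be tracked with a fixed orientation throughout the four stages. Second, boundary effects: the pattern $\de^{(i)}(C)$ has width $2n$ while the weight matrix of $AD_{n-i}$ has width $2(n-i)<2n$, so the wrap-around column-pair, the one that would mix $\Delta_n$ with $\Delta_1$, is truncated off at every stage and never contributes, which is why no term $\Delta_n+\Delta_1$ appears and why, under iteration, the exponents of the $\Delta_i+\Delta_{i+1}$ accumulate triangularly, peaking at the central index $k+1$; these triangular sums are what must be matched against $\prod_{i=1}^{k-1}(\Delta_i+\Delta_{i+1})^i$, $\prod_{i=1}^k(\Delta_{2k-i}+\Delta_{2k-i+1})^i$ in the even case and their analogues in the odd case. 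Third, the two parities of $n$ must be kept separate: the odd case acquires the extra factor $\Delta_{k+1}$ from the odd-$n$ surplus noted in the first paragraph, and the exponents on $a_i,b_i,c_i,d_i$ shift by one. A clean way to organize the whole computation is to first treat the fully generic case, in which $\Delta_1,\dots,\Delta_n$ are independent indeterminates, so that the recurrence is forced and no accidental cancellation can occur, and only afterwards specialize; the induction on $k$ is then routine, and the remaining work is the purely algebraic simplification of the accumulated monomials into the exponents $k-i+1$, $k+1-i$, $k-i$, $-k$ of the statement.
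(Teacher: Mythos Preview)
Your proposal is correct and follows essentially the same route as the paper's proof: compute $\de^{(i)}(C)$ for $i=1,2,3,4$, apply the Reduction Theorem at each stage (equations~(\ref{gf1})--(\ref{gf4}) in the paper), then use Lemma~\ref{mulstar} to recognize the result as $AD_{2k-4}(\wt_{\phi(C)})$ with $\phi(C)$ the pattern $C$ truncated to column-pairs $3,\dots,n-2$, and finally iterate the resulting recurrence~(\ref{gf6}). Your observations that the first reduction is trivial because the $\Delta_i$ and $1/\Delta_i$ cell-factors cancel in pairs for even $n$, that the sums $\Delta_i+\Delta_{i+1}$ first appear in the cell-factors of $AD_{n-1}(\wt_{\de(C)})$, and that the wrap-around column-pair is always cut off so no $\Delta_n+\Delta_1$ term ever arises, are exactly the points the paper relies on.
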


\begin{proof}
(a)
One can verify that $\de(C)$, $\de^{(2)}(C)$, $\de^{(3)}(C)$ and $\de^{(4)}(C)$ are four block matrices of size $4\times 2n$ presented as below.
\begin{align}
\de(C)&=\begin{bmatrix}B_1& B_2&\dotsc &B_n\end{bmatrix}\notag\\
\de^{(2)}(C)&=\begin{bmatrix}C_1&C_2& \dotsc &C_n\end{bmatrix}\notag\\
\de^{(3)}(C)&=\begin{bmatrix}D_1&D_2&\dots&D_n\end{bmatrix}\notag\\
\de^{(4)}(C)&=\begin{bmatrix}E_1&E_2&\dots&E_n\end{bmatrix},\notag
\end{align}
where
\[B_i=\begin{bmatrix}
a_i/\Delta_i&b_{i+1}/\Delta_{i+1}\\
a_i&b_{i+1}\\
d_i&c_{i+1}\\
d_i/\Delta_i&c_{i+1}/\Delta_{i+1}
\end{bmatrix},\]
\[C_i=\begin{bmatrix}
\frac{\Delta_{i+1}}{b_{i+1}(\Delta_{i}+\Delta_{i+1})}&\frac{\Delta_{i+1}}{a_{i+1}(\Delta_{i+1}+\Delta_{i+2})}\\
\frac{\Delta_{i+1}}{c_{i+1}(\Delta_{i}+\Delta_{i+1})}&\frac{\Delta_{i+1}}{d_{i+1}(\Delta_{i+1}+\Delta_{i+2})}\\
\frac{\Delta_{i}\Delta_{i+1}}{c_{i+1}(\Delta_{i}+\Delta_{i+1})}&\frac{\Delta_{i+1}\Delta_{i+2}}{d_{i+1}(\Delta_{i+1}+\Delta_{i+2})}\\
\frac{\Delta_{i}\Delta_{i+1}}{b_{i+1}(\Delta_{i}+\Delta_{i+1})}&\frac{\Delta_{i+1}\Delta_{i+2}}{a_{i+1}(\Delta_{i+1}+\Delta_{i+2})}
\end{bmatrix},\]
\[D_i=\begin{bmatrix}
\frac{a_{i+1}c_{i+1}d_{i+1}(\Delta_{i+1}+\Delta_{i+2})}{\Delta_{i+1}^2}&\frac{b_{i+2}c_{i+2}d_{i+2}(\Delta_{i+1}+\Delta_{i+2})}{\Delta_{i+2}^2}\\
\frac{a_{i+1}c_{i+1}d_{i+1}(\Delta_{i+1}+\Delta_{i+2})}{\Delta_{i+1}^2\Delta_{i+2}}&\frac{b_{i+2}c_{i+2}d_{i+2}(\Delta_{i+1}+\Delta_{i+2})}{\Delta_{i+1}\Delta_{i+2}^2}\\
\frac{a_{i+1}b_{i+1}d_{i+1}(\Delta_{i+1}+\Delta_{i+2})}{\Delta_{i+1}^2\Delta_{i+2}}&\frac{a_{i+2}b_{i+2}c_{i+2}(\Delta_{i+1}+\Delta_{i+2})}{\Delta_{i+1}\Delta_{i+2}^2}\\
\frac{a_{i+1}b_{i+1}d_{i+1}(\Delta_{i+1}+\Delta_{i+2})}{\Delta_{i+1}^2}&\frac{a_{i+2}b_{i+2}c_{i+2}(\Delta_{i+1}+\Delta_{i+2})}{\Delta_{i+2}^2}
\end{bmatrix},\]
\[
E_i=\begin{bmatrix}
\frac{\Delta_{i+1}\Delta_{i+2}^3}{b_{i+2}c_{i+2}d_{i+2}(\Delta_{i+1}+\Delta_{i+2})^2}&\frac{\Delta_{i+2}^3\Delta_{i+3}}{a_{i+2}c_{i+2}d_{i+2}(\Delta_{i+2}+\Delta_{i+3})^2}\\
\frac{\Delta_{i+1}\Delta_{i+2}^3}{a_{i+2}b_{i+2}c_{i+2}(\Delta_{i+1}+\Delta_{i+2})^2}&\frac{\Delta_{i+2}^3\Delta_{i+3}}{a_{i+2}b_{i+2}d_{i+2}(\Delta_{i+2}+\Delta_{i+3})^2}\\
\frac{\Delta_{i+1}\Delta_{i+2}^2}{b_{i+2}b_{i+2}c_{i+2}(\Delta_{i+1}+\Delta_{i+2})^2}&\frac{\Delta_{i+2}^2\Delta_{i+3}}{a_{i+2}b_{i+2}d_{i+2}(\Delta_{i+2}+\Delta_{i+3})^2}\\
\frac{\Delta_{i+1}\Delta_{i+2}^2}{b_{i+2}c_{i+2}d_{i+2}(\Delta_{i+1}+\Delta_{i+2})^2}&\frac{\Delta_{i+2}^2\Delta_{i+3}}{a_{i+2}c_{i+2}d_{i+2}(\Delta_{i+2}+\Delta_{i+3})^2}
\end{bmatrix},
\]
for $i=1,2,\dotsc,n$ (the subscripts here are interpreted modulo $n$).

Apply Reduction Theorem \ref{reduction}, we have four recurrences below.
\begin{align}\label{gf1}
\M(AD_{2k}(\wt_C))&=\M(AD_{2k-1}(\wt_{\de(A)}))\prod_{i=1}^{2k}\Delta_{i}^{k_1}\Delta_{i}^{-k}\notag\\
&=M(AD_{2k-1}(wt_{d(C)})).
\end{align}
\begin{equation}\label{gf2}
\frac{\M(AD_{2k-1}(\wt_{d(C)}))}{\M(AD_{2k-2}(\wt_{\de^{(2)}(C)}))}=\prod_{i=1}^{2k-1}\left(\frac{a_ib_{i+1}(\Delta_i+\Delta_{i+1})}{\Delta_{i}\Delta_{i+1}}\right)^{k}
\left(\frac{c_{i+1}d_i(\Delta_i+\Delta_{i+1})}{\Delta_{i}\Delta_{i+1}}\right)^{k-1}.
\end{equation}
\begin{equation}\label{gf3}
\frac{\M(AD_{2k-2}(\wt_{\de^{(2)}(C)}))}{\M(AD_{2k-3}(\wt_{\de^{(3)}(C)}))}=\prod_{i=1}^{2k-2}\left(\frac{\Delta_i\Delta_{i+1}^6\Delta_{i+2}}
{a_{i+1}^2b_{i+1}^2c_{i+1}^2d_{i+1}^2(\Delta_{i}+\Delta_{i+1})^2(\Delta_{i+1}+\Delta_{i+2})^2}\right)^{k-1}.
\end{equation}
\begin{align}\label{gf4}
\frac{\M(AD_{2k-3}(\wt_{\de^{(3)}(C)}))}{\M(AD_{2k-4}(\wt_{\de^{(4)}(C)}))}=&\prod_{i=1}^{2k-3}\left(\frac{a_{i+1}c_{i+1}d_{i+1}b_{i+2}c_{i+2}d_{i+2}(\Delta_{i+1}+\Delta_{i+2})^3}{\Delta_{i+1}^3\Delta_{i+2}^3}\right)^{k-1}\notag\\
&\times\left(\frac{a_{i+1}b_{i+1}d_{i+1}a_{i+2}b_{i+2}c_{i+2}(\Delta_{i+1}+\Delta_{i+2})^3}{\Delta_{i+1}^3\Delta_{i+2}^3}\right)^{k-2}.
\end{align}

Divide the weight matrix of $AD_{2k-4}(\wt_{\de^{(4)}(C)})$  into $2k-3$ parts (by columns) as in Lemma \ref{mulstar}(a), and multiply all entries of the $i$-th part by $\dfrac{(\Delta_{i+1}+\Delta_{i+2})^2}{\Delta_{i+1}\Delta_{i+2}}$, for $i=1,2,\dots,2k-3$. Divide the resulting matrix into $2k-4$ parts (by columns) as in Lemma \ref{mulstar}(b), and multiply all entries of the $j$-th part by $\dfrac{a_{j+2}b_{j+2}c_{j+2}d_{j+2}}{\Delta_{j+2}^2}$, for $j=1,2,\dots,2k-4$. We get the weight matrix of the Aztec diamond $AD_{2k-4}(\wt_{\phi(C)})$, where
\begin{equation}
\phi(C)=\begin{bmatrix}A_3&A_4&\dots&A_{n-2}\end{bmatrix},
\end{equation}
and where the operator $\phi$ is defined as in the proof of Theorem \ref{weighted}. Therefore, Lemma \ref{mulstar} implies
{\small\begin{equation}\label{gf5}
\frac{\M(AD_{2k-4}(\wt_{\de^{(4)}(C)}))}{\M(AD_{2k-4}(\wt_{\phi(C)}))}=\prod_{i=1}^{2k-3}\left(\frac{(\Delta_{i+1}+\Delta_{i+2})^2}{\Delta_{i+1}\Delta_{i+2}}\right)^{-(2k-4)}\prod_{i=1}^{2k-4}
\left(\frac{a_{i+2}b_{i+2}c_{i+2}d_{i+2}}{\Delta_{i+2}^2}\right)^{-(2k-3)}.
\end{equation}}
By (\ref{gf1})--(\ref{gf5}), we obtain
\begin{align}\label{gf6}
\frac{\M(AD_{2k}(\wt_{(C)}))}{\M(AD_{2k-4}(\wt_{\phi(C)}))}=&a_1^{k}a_2^{k-1}b_{2k-1}^{k-1}b_{2k}^kc_{2k-1}^{k-2}c_{2k}^{k-1}d_1^{k-1}d_2^{k-2}\Delta_1^{-k}\Delta_2^{-k}\Delta_{2k-1}^{-k}\Delta_{2k}^{-k}\prod_{i=3}^{2n-2}(\Delta_{i}^{-2})\notag\\
&\times(\Delta_1+\Delta_2)(\Delta_{2k-1}+\Delta_{2k})\prod_{i=1}^{2k-3}(\Delta_{i+1}+\Delta_{i+2}).
\end{align}
Repeated application of (\ref{gf6}) yields (\ref{29recure}).

\medskip
(b) Assume that $n=2k+1$. In the same fashion, we get
\begin{align}\label{gf6o}
\frac{\M(AD_{2k+1}(\wt_{(C)}))}{\M(AD_{2k-3}(\wt_{\phi(C)}))}=&a_1^{k}a_2^{k-1}b_{2k}^{k-1}b_{2k+1}^kc_{2k}^{k-1}c_{2k+1}^{k}d_1^{k}d_2^{k-1}\Delta_1^{-k}\Delta_2^{-k}\Delta_{2k}^{-k}\Delta_{2k+!}^{-k}\notag\\
&\times\prod_{i=3}^{2n-1}(\Delta_{i}^{-2}).(\Delta_1+\Delta_2)(\Delta_{2k}+\Delta_{2k+1})\prod_{i=1}^{2k-2}(\Delta_{i+1}+\Delta_{i+2}).
\end{align}
Again, $(\ref{29recuro})$ is obtained by applying the recurrence (\ref{gf6o}) repeatedly.
\end{proof}

We are now ready to present the proof of Theorem \ref{power29} by using Theorem \ref{power29gen}.

\begin{figure}\centering
\includegraphics[width=0.5\textwidth]{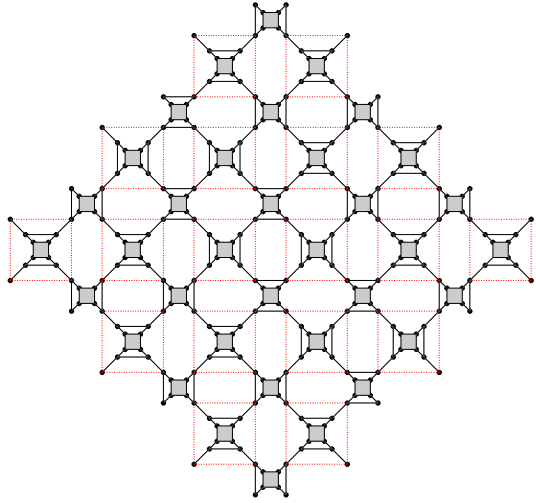}
\caption{The dual graph of $Q_6$.}
\label{Fig29}
\end{figure}

\begin{proof}[Proof of Theorem \ref{power29}]
Consider the dual graph of $Q_n$ (see Figure \ref{Fig29}). Apply Spider Lemma \ref{spider}(a) \textit{twice} to all shaded cells inside the dotted squares (and apply Lemma \ref{ER} to all multiple edges if needed) by the same way as we did in the proof of Corollary \ref{power7cor}(b). The cell-factors in the first application of the Spider Lemma are $2$, and the cell-factors in the second application of the Spider Lemma are $5/2$. Then apply the Spider Lemma \ref{spider}(a) to all other shaded cells (the cell-factors are all $2$). The process gives us a graph isomorphic to $AD_n(\wt_{C_0})$, where
\begin{equation}
C_0=\begin{bmatrix}
1/5&3/5&3/2&1/2&3/5&1/5&1/2&3/2\\
3/5&1/5&1/2&3/2&1/5&3/5&3/2&1/2\\
3/2&1/2&1/5&3/5&1/2&3/2&3/5&1/5\\
1/2&3/2&3/5&1/5&3/2&1/2&1/5&3/5
\end{bmatrix}.
\notag\end{equation}
By enumerating the shaded cells in each type above, we have
\begin{equation}\label{power29eq}
\frac{\T(Q_n)}{\M(AD_n(\wt_{C_0}))}=\begin{cases} 2^{2k^2}(5/2)^{2k^2}2^{2k^2}&\text{if $n=2k$;}\\
2^{(k+1)^2+k^2}(5/2)^{(k+1)^2+k^2}2^{2k(k+1)}&\text{ if $n=2k+1$.}
\end{cases}
\end{equation}
Therefore, the theorem follows (\ref{power29eq}) and Theorem \ref{power29gen}  by specializing

$\quad\quad a_{4i+1}=c_{4i+1}=1/5$ and $b_{4i+1}=d_{4i+1}=3/5$, for $i=0,1,\dots,\lfloor\frac{n-1}{4}\rfloor$;

$\quad\quad a_{4i+2}=c_{4i+2}=3/2$ and $b_{4i+2}=d_{4i+2}=1/2$, for $i=0,1,\dots,\lfloor\frac{n-2}{4}\rfloor$;

$\quad\quad a_{4i+3}=c_{4i+3}=3/5$ and $b_{4i+3}=d_{4i+3}=1/5$, for $i=0,1,\dots,\lfloor\frac{n-3}{4}\rfloor$;

$\quad\quad a_{4i+4}=c_{4i+4}=1/2$ and $b_{4i+4}=d_{4i+4}=3/2$, for $i=0,1,\dots,\lfloor\frac{n-4}{4}\rfloor$.
  \end{proof}

One readily sees that the weight pattern $C$ in Theorem \ref{power29gen} is also a generalization of the weight patterns of  $AD(d_1,\dotsc,d_m)$ and $\overline{AD}(d_1,\dotsc,d_m)$ in the proof of Theorem \ref{main}. Therefore, Theorem \ref{power29gen} can be viewed as a common multi-parameter generalization of Theorems \ref{main} and \ref{power29}.

\medskip
We conclude this section by presenting one more multi-parameter generalization of \ref{power29}. Let us consider a new weight pattern
\begin{equation}
N=\begin{bmatrix}
a&b&x&y&b&a&y&x\\
d&c&t&z&c&d&z&t\\
x&y&a&b&y&x&b&a\\
t&z&c&d&z&t&c&d
\end{bmatrix},
\end{equation}
for any positive numbers $a,$ $b,$ $c,$ $d,$ $x,$ $y,$ $z,$ $t$. We get the weight pattern $C_0$ in the proof of Theorem \ref{power29} by specializing $a=c=1/5$, $b=d=3/5$, $x=z=3/2$ and $y=t=1/2$.
\begin{theorem}
The values of $\M(AD_m(wt_N))$ are given by the following recurrences for $n\geq2$
\begin{align}
\frac{\M(AD_{4n}(\wt_{N}))}{\M(AD_{4n-8}(\wt_{N}))}=&\Delta_0^{8n-8}(ad+xt)^{2n-2}(bc+yz)^{2n-4}C_1^{2n}C_2^{2n-2}\notag\\
&\times a^{2n-1}b^{2n-3}c^{2n-3}d^{2n-1}x^{2n-1}y^{2n-3}z^{2n-3}t^{2n-1},\notag
\end{align}
\begin{align}
\frac{\M(AD_{4n+1}(\wt_{N}))}{\M(AD_{4n-7}(\wt_{N}))}=&\Delta_0^{8n-6}(ad+xt)^{2n-2}(bc+yz)^{2n-3}C_1^{2n}C_2^{2n-1}\notag\\
&a^{2n-1}b^{2n-2}c^{2n-2}d^{2n-1}x^{2n-1}y^{2n-2}z^{2n-2}t^{2n-1},\notag
\end{align}
\begin{align}
\frac{\M(AD_{4n+2}(\wt_{N}))}{\M(AD_{4n-6}(\wt_{N}))}=&\Delta_0^{8n-4}(ad+xt)^{2n-2}(bc+yz)^{2n-2}C_1^{2n}C_2^{2n}\notag\\
&\times a^{2n-1}b^{2n-1}c^{2n-1}d^{2n-1}x^{2n-1}y^{2n-1}z^{2n-1}t^{2n-1},\notag
\end{align}
\begin{align}
\frac{\M(AD_{4n+3}(\wt_{N}))}{\M(AD_{4n-5}(\wt_{N}))}=&\Delta_0^{8n-2}(ad+xt)^{2n-1}(bc+yz)^{2n-2}C_1^{2n+1}C_2^{2n}\notag\\
&\times a^{2n}b^{2n-1}c^{2n-1}d^{2n}x^{2n}y^{2n-1}z^{2n-1}t^{2n},\notag
\end{align}
and the initial values\\
$\M(AD_{0}(\wt_{N}))=1$,\\
$\M(AD_{1}(\wt_{N}))=\Delta_1$,\\
$\M(AD_{2}(\wt_{N}))=C_3$,\\
$\M(AD_{3}(\wt_{N}))=ad\Delta_0\Delta_2C_1$,\\
$\M(AD_{4}(\wt_{N}))=adxt\Delta_0^2C_1^2$,\\
$\M(AD_{5}(\wt_{N}))=abcdxt\Delta_0^3C_1^2C_2$,\\
$\M(AD_{6}(\wt_{N}))=abcdxyzt\Delta_0^4\Delta_2C_1^2C_2^2$,\\
$\M(AD_{7}(\wt_{N}))=a^2bcd^2x^2yzt^2\Delta_0^6\Delta_1C_1^3C_2^2,$\\
where $\Delta_1=ac+bd$, $\Delta_2=xz+yt$, $\Delta_0=\Delta_1+\Delta_2$, $C_1=xt\Delta_1^2+ad\Delta_2^2$, $C_2=yz\Delta_1^2+bc\Delta_2^2$, $C_3=ac\Delta_2^2+yt\Delta_1^2$, and $C_4=bd\Delta_2^2+xz\Delta_1^2$ .
\end{theorem}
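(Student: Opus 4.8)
The plan is to follow the same strategy used in the proofs of Theorems \ref{weighted}, \ref{7gen}, and \ref{power29gen}: iterate the operator $\de$ together with the Reduction Theorem \ref{reduction}, record at each step the product of cell-factors produced, and stop once the weight pattern has cycled back (up to an overall positive scalar) to a pattern of the same shape. Concretely, first I would compute $\de(N),\de^{(2)}(N),\de^{(3)}(N),\dots$ explicitly. Since $N$ is $4\times 8$ — row period $4$, column period $8$ — the row structure of the iterates returns to normal after $4$ applications of $\de$ (each application cyclically shifts columns one unit up), and the column structure returns after $8$ applications (each application cyclically shifts rows one unit left). I expect $\de^{(4)}(N)$ to be a scalar multiple of the pattern $\widetilde N$ obtained from $N$ by the column-shift-by-$4$ symmetry (which, given the near-symmetry of $N$, essentially swaps $a\leftrightarrow b$, $c\leftrightarrow d$, $x\leftrightarrow y$, $z\leftrightarrow t$), and $\de^{(8)}(N)=\lambda N$ for an explicit scalar $\lambda$ built out of $\Delta_1=ac+bd$, $\Delta_2=xz+yt$, $\Delta_0=\Delta_1+\Delta_2$ and the quantities $C_1=xt\Delta_1^2+ad\Delta_2^2$, $C_2=yz\Delta_1^2+bc\Delta_2^2$, $C_3=ac\Delta_2^2+yt\Delta_1^2$, $C_4=bd\Delta_2^2+xz\Delta_1^2$; these are exactly the cell-factor expressions that surface after two $\de$-iterations on a $4\times 4$-periodic pattern, in the same way that $P$ arises in Theorem \ref{7gen}.

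Next, for each of the eight successive applications of the Reduction Theorem I would identify the cell-factors of the cells of $AD_{m-i+1}(\wt_{\de^{(i-1)}(N)})$. Because the column pattern has period $8$, a cell's cell-factor depends only on its column index modulo $8$ and on whether it lies in the first row of cells, an interior row, or the last row; hence for fixed $m$ the number of cells carrying each value is a quasi-polynomial in $m$ whose leading (quadratic) term is universal but whose lower-order terms depend on $m\bmod 4$. This residue dependence is precisely what produces the four cases $m\equiv 0,1,2,3\pmod 4$ in the statement. Summing the contributions over the eight steps — an enumeration of the same flavour as the indicator-function computation in Corollary \ref{fortressw} — gives $\M(AD_m(\wt_N))/\M(AD_{m-8}(\wt_{\de^{(8)}(N)}))$ as an explicit monomial in $a,b,c,d,x,y,z,t$, $\Delta_0$, $C_1$, $C_2$, $ad+xt$, $bc+yz$. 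Applying Lemma \ref{mulstar} to absorb the scalar $\lambda$ arising from $\de^{(8)}(N)=\lambda N$ then yields the four displayed recurrences for $\M(AD_{4n+j}(\wt_N))/\M(AD_{4n+j-8}(\wt_N))$, $j=0,1,2,3$.

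Finally I would compute the eight base values $\M(AD_j(\wt_N))$ for $j=0,1,\dots,7$ directly: starting from $\M(AD_0(\wt_N))=1$ and applying the Reduction Theorem a bounded number of times (reusing the matrices $\de^{(i)}(N)$ already computed), which is a finite and mechanical check against the listed values involving $\Delta_0,\Delta_1,\Delta_2,C_1,C_2,C_3$. Together with the four recurrences this determines $\M(AD_m(\wt_N))$ for every $m\geq 0$.

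The main obstacle I anticipate is computational but genuinely substantial: producing $\de(N),\dots,\de^{(8)}(N)$ correctly and confirming the closure relation $\de^{(8)}(N)=\lambda N$ with the precise scalar $\lambda$ — equivalently, spotting the right intermediate quantities $C_1,C_2,C_3,C_4,\Delta_0$ so that the entries of the iterated matrices remain of a manageable form — and then carrying the cell-factor bookkeeping through all eight Reduction steps while keeping the four parity sub-cases and the modulo-$8$ column dependence perfectly aligned. Once the closure relation and the cell-factor counts are in hand, the assembly of the recurrences and the computation of the initial values are routine and entirely parallel to the earlier proofs in this section.
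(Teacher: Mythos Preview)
Your overall strategy---iterate $\de$, record cell-factors, and close the loop---is the right instinct, but one specific expectation is wrong and this is where your plan diverges from what actually happens. You predict that $\de^{(4)}(N)$ will be a scalar multiple of a column-shifted copy of $N$, and hence that $\de^{(8)}(N)=\lambda N$ for a single scalar $\lambda$ absorbable via Lemma~\ref{mulstar}. The paper computes $\de^{(4)}(N)$ explicitly: its entries carry \emph{different} denominators $C_1^2C_3$, $C_3^2C_4$, $C_2^2C_4$, $C_3C_4^2$, etc., depending on the column and on the row-block, so $\de^{(4)}(N)$ is not a uniform scalar multiple of anything simple. Consequently your ``absorb the single scalar $\lambda$'' step, and with it the whole $8$-iteration plan in the form you state it, does not go through as written.

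What the paper does instead is stop after four applications of the Reduction Theorem and then perform two rounds of \emph{non-uniform} rescalings: first Lemma~\ref{mulstar}(a) with a \emph{different} factor on each column-part (involving $C_1^2$, $C_2^2$, or $C_3C_4$), and then Lemma~\ref{matrixstar}---the block-wise Star Lemma, which you do not invoke at all---with yet another family of block-dependent factors $C_3/at$, $C_3/cy$, $C_4/bz$, $C_4/dx$. Only after both rescalings does one reach a pattern $\widetilde{N}$ of the same structural form as $N$ (with the roles of $(a,b,c,d)$ and $(x,y,z,t)$ exchanged and divided by $\Delta_1,\Delta_2$). The four-step ratio already derived is then re-applied to $\widetilde{N}$, and the two four-step ratios combine to give the stated eight-step recurrence. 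So the missing ingredient in your plan is precisely Lemma~\ref{matrixstar} together with the recognition that the rescalings needed after four $\de$-steps are column- and block-dependent rather than global; this is what makes the paper's route both feasible and much shorter than pushing $\de$ all the way to $\de^{(8)}(N)$.
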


\begin{proof}[Sketch of proof]
We prove only the first recurrence, the other ones can be treated similarly.

By the Reduction Theorem and the definition of the operator $\de$ we have four recurrences as below.
\begin{equation}\label{star1}
\frac{\M(AD_{4n}(\wt_{N}))}{\M(AD_{4n-1}(\wt_{\de(N)}))}=\Delta_1^{8n^2}\Delta_2^{8n^2}.
\end{equation}
\begin{align}\label{star2}
\frac{\M(AD_{4n-1}(\wt_{\de(N)}))}{\M(AD_{4n-2}(\wt_{\de^{(2)}(N)}))}=&\left(\frac{C_1}{\Delta_1^2\Delta_2^2}\right)^{(4n-1)n}\left(\frac{C_2}{\Delta_1^2\Delta_2^2}\right)^{(4n-1)(n-1)}\notag\\
&\times\left(\frac{C_3}{\Delta_1^2\Delta_2^2}\right)^{(4n-1)n}\left(\frac{C_4}{\Delta_1^2\Delta_2^2}\right)^{(4n-1)n}.
\end{align}
\begin{align}\label{star3}
\frac{\M(AD_{4n-2}(\wt_{\de^{(2)}(N)}))}{\M(AD_{4n-3}(\wt_{\de^{(3)}(N)}))}=&\left(\frac{\Delta_0\Delta_1^3\Delta_2^5ad}{C_1C_3C_4}\right)^{(4n-2)n}\left(\frac{\Delta_0\Delta_1^5\Delta_2^3xt}{C_1C_3C_4}\right)^{(4n-2)n}\notag\\
&\times\left(\frac{\Delta_0\Delta_1^3\Delta_2^5bc}{C_2C_3C_4}\right)^{(4n-2)(n-1)}\left(\frac{\Delta_0\Delta_1^5\Delta_2^3yz}{C_2C_3C_4}\right)^{(4n-2)(n-1)}.
\end{align}
\begin{align}\label{star4}
\frac{\M(AD_{4n-3}(\wt_{\de^{(3)}(N)}))}{\M(AD_{4n-4}(\wt_{\de^{(4)}(N)}))}=&\left(\frac{C_3C_4C_1^3}{\Delta_0^2\Delta_1^6\Delta_2^6adxt}\right)^{(4n-3)n}\left(\frac{C_3^3C_4^2}{\Delta_0^2\Delta_1^6\Delta_2^6acyt}\right)^{(4n-3)(n-1)}\notag\\
&\times\left(\frac{C_3^2C_4^3}{\Delta_0^2\Delta_1^6\Delta_2^6bdxz}\right)^{(4n-3)(n-1)}\left(\frac{C_3C_4C_y^3}{\Delta_0^2\Delta_1^6\Delta_2^6bcyz}\right)^{(4n-3)(n-1)}.
\end{align}
Moreover, we can verify that matrix $\de^{(4)}(N)$ is given by
\begin{align}&\de^{(4)}(N)=\notag\\
&\begin{bmatrix}
\frac{\Delta_{0}\Delta_1^4\Delta_2^3axt}{C_1^2C_3}\frac{\Delta_{0}\Delta_1^4\Delta_2^3ayt}{C_3^2C_4}\frac{\Delta_{0}\Delta_1^3\Delta_2^4acy}{C_3^2C_4}\frac{\Delta_{0}\Delta_1^3\Delta_2^4bcy}{C_3C_2^2}
\frac{\Delta_{0}\Delta_1^4\Delta_2^3byz}{C_2^2C_4}\frac{\Delta_{0}\Delta_1^4\Delta_2^3bxz}{C_3C_4^2}\frac{\Delta_{0}\Delta_1^3\Delta_2^4bdx}{C_3C_4^2}\frac{\Delta_{0}\Delta_1^3\Delta_2^4adx}{C_4C_1^2}\\
\frac{\Delta_{0}\Delta_1^4\Delta_2^3dxt}{C_1^2C_4}\frac{\Delta_{0}\Delta_1^4\Delta_2^3dxz}{C_3C_4^2}\frac{\Delta_{0}\Delta_1^3\Delta_2^4bdz}{C_3C_4^2}\frac{\Delta_{0}\Delta_1^3\Delta_2^4bcz}{C_4C_2^2}
\frac{\Delta_{0}\Delta_1^4\Delta_2^3cyz}{C_3C_2^2}\frac{\Delta_{0}\Delta_1^4\Delta_2^3cyt}{C_3^2C_4}\frac{\Delta_{0}\Delta_1^3\Delta_2^4act}{C_3^2C_4}\frac{\Delta_{0}\Delta_1^3\Delta_2^4adt}{C_3C_1^2}\\
\frac{\Delta_{0}\Delta_1^3\Delta_2^4adx}{C_4C_1^2}\frac{\Delta_{0}\Delta_1^3\Delta_2^4bdx}{C_3C_4^2}\frac{\Delta_{0}\Delta_1^4\Delta_2^3bxz}{C_3C_4^2}\frac{\Delta_{0}\Delta_1^4\Delta_2^3byz}{C_4C_2^2}
\frac{\Delta_{0}\Delta_1^3\Delta_2^4bcy}{C_3C_2^2}\frac{\Delta_{0}\Delta_1^3\Delta_2^4acy}{C_3^2C_4}\frac{\Delta_{0}\Delta_1^4\Delta_2^3ayt}{C_3^2C_4}\frac{\Delta_{0}\Delta_1^4\Delta_2^3axt}{C_3C_1^2}\\
\frac{\Delta_{0}\Delta_1^3\Delta_2^4adt}{C_3C_1^2}\frac{\Delta_{0}\Delta_1^3\Delta_2^4act}{C_3^2C_4}\frac{\Delta_{0}\Delta_1^4\Delta_2^3cyt}{C_3^2C_4}\frac{\Delta_{0}\Delta_1^4\Delta_2^3xyz}{C_3C_2^2}
\frac{\Delta_{0}\Delta_1^3\Delta_2^4bcz}{C_4C_2^2}\frac{\Delta_{0}\Delta_1^3\Delta_2^4bdz}{C_3C_4^2}\frac{\Delta_{0}\Delta_1^4\Delta_2^3dxz}{C_3C_4^2}\frac{\Delta_{0}\Delta_1^4\Delta_2^3dxt}{C_4C_1^2}
\end{bmatrix}.
\notag\end{align}

Divide the weight matrix of $AD_{4n-4}(\wt_{d^{(4)}(N)})$ into $4n-3$ parts (by columns) as in Lemma \ref{mulstar}(a). Multiply all entries in the $(4k+1)$-th part by $\frac{C_1^2}{\Delta_0\Delta_1^4\Delta_2^4}$, for $k=0,\dots,n-1$; multiply all entries in the $(4k+2)$-th and $(4k+4)$-th parts by $\frac{C_3C_4}{\Delta_0\Delta_1^4\Delta_2^4}$, and multiply all entries in the $(4k+3)$-th part by $\frac{C_2^2}{\Delta_0\Delta_1^4\Delta_2^4}$, for $k=0,1,\dots,n-2$. We get the weight matrix of the weighted Aztec diamond $AD_{4n-4}(\wt_{\overline{N}})$, where
\[\overline{N}=\begin{bmatrix}
\frac{adg}{C_3\Delta_2}&\frac{ayt}{C_3\Delta_2}&\frac{acy}{C_3\Delta_1}&\frac{bcy}{C_3\Delta_1}&\frac{byz}{C_4\Delta_2}&\frac{bxz}{C_4\Delta_2}&\frac{bdx}{C_4\Delta_2}&\frac{adx}{C_4\Delta_2}\\
\frac{ceg}{C_4\Delta_2}&\frac{dxz}{C_4\Delta_2}&\frac{bdz}{C_4\Delta_1}&\frac{bcz}{C_4\Delta_1}&\frac{cyz}{C_3\Delta_2}&\frac{cyt}{C_3\Delta_2}&\frac{act}{C_3\Delta_2}&\frac{adt}{C_3\Delta_2}\\
\frac{ace}{C_4\Delta_1}&\frac{bdx}{C_4\Delta_1}&\frac{bxz}{C_4\Delta_2}&\frac{byz}{C_4\Delta_2}&\frac{bcy}{C_3\Delta_1}&\frac{acy}{C_3\Delta_1}&\frac{ayt}{C_3\Delta_2}&\frac{axt}{C_3\Delta_2}\\
\frac{acg}{C_3\Delta_1}&\frac{act}{C_3\Delta_1}&\frac{cyt}{C_3\Delta_2}&\frac{cyz}{C_3\Delta_2}&\frac{bcz}{C_4\Delta_1}&\frac{bdz}{C_4\Delta_1}&\frac{dxz}{C_4\Delta_2}&\frac{dxt}{C_4\Delta_2}
\end{bmatrix}.\]
Thus, we get from Lemma \ref{mulstar}
{\small
\begin{align}\label{star5}
\frac{\M(AD_{4n-4}(\wt_{\de^{(4)}(N)}))}{\M(AD_{4n-4}(\wt_{\overline{N}}))}=&\left(\Delta_0\Delta_1^4\Delta_2^4\right)^{(4n-4)(4n-3)}\left(\frac{1}{C_1}\right)^{(4n-4)2n}\notag\\
&\times\left(\frac{1}{C_3C_4}\right)^{(4n-4)2(n-1)}\left(\frac{1}{C_2}\right)^{(4n-4)2(n-1)}.
\end{align}}
Next, we divide the weight matrix of $AD_{4n-4}(\wt_{\overline{N}})$ into $(4n-3)\times(4n-4)$ blocks $M_{ij}$, for $i=1,2,\dots,4n-3$, and $j=1,2,\dots,4n-4$,  as in the Lemma \ref{matrixstar}. Multiply all entries of blocks $M_{2p+1,4q+1}$ and $M_{2p+2,4q+4}$ by $\dfrac{C_3}{at}$; multiply all entries of $M_{2p+1,4q+2}$ and $M_{2p+2,4q+3}$ by $\dfrac{C_3}{cy}$; multiply all entries of $M_{2p+1,4q+3}$ and $M_{2p+2,4q+2}$ by $\dfrac{C_4}{bz}$; and multiply all entries of $M_{2p+1,4q+4}$ and $M_{2p+2,4q+1}$ by $\dfrac{C_4}{dx}$. We get the weight matrix of the weighted Aztec diamond $AD_{4n-4}(wt_{\widetilde{N}})$, where
\[\widetilde{N}=\begin{bmatrix}
x/\Delta_2&y/\Delta_2&a/\Delta_1&b/\Delta_1&y/\Delta_2&x/\Delta_2&b/\Delta_1&a/\Delta_1\\
t/\Delta_2&z/\Delta_2&d/\Delta_1&c/\Delta_1&z/\Delta_2&t/\Delta_2&c/\Delta_1&d/\Delta_1\\
a/\Delta_1&b/\Delta_1&x/\Delta_2&y/\Delta_2&b/\Delta_1&a/\Delta_1&y/\Delta_2&x/\Delta_2\\
d/\Delta_1&c/\Delta_1&t/\Delta_2&z/\Delta_2&c/\Delta_1&d/\Delta_1&z/\Delta_2&t/\Delta_2
\end{bmatrix}.
\]
We apply Lemma \ref{matrixstar} and obtain
\begin{equation}\label{star6}
\frac{\M(AD_{4n-4}(\wt_{\overline{N}}))}{\M(AD_{4n-4}(\wt_{\widetilde{N}}))}=\left(\frac{at}{C_3}\right)^{4n-3}\left(\frac{dx}{C_4}\right)^{4n-3}\left(\frac{cy}{C_3}\right)^{4n-3}\left(\frac{bz}{C_4}\right)^{4n-3}.
\end{equation}
Using the equations (\ref{star1})--(\ref{star6}) above, we imply
\begin{equation} \label{star7}
\frac{\M(AD_{4n}(\wt_{N}))}{\M(AD_{4n-4}(\wt_{\widetilde{N}}))}=\Delta_0^{4n-2}\Delta_1^{8(n-1)^2}\Delta_2^{8(n-1)^2}C_1^{2n}C_2^{2n-2}
a^nb^{n-1}c^{n-1}d^nx^{n}y^{n-1}z^{n-1}t^{n}. \end{equation}
Moreover,  by applying the recurrence  (\ref{star7}) to $AD_{4n-4}(\wt_{\widetilde{N}})$, we get
\begin{align}\label{star8}
\frac{\M(AD_{4n-4}(\wt_{\widetilde{N}}))}{\M(AD_{4n-8}(\wt_{N}))}=&\left(\frac{\Delta_0}{\Delta_1\Delta_2}\right)^{4(n-1)-2}\left(\frac{1}{\Delta_2}\right)^{8(n-2)^2}
\left(\frac{1}{\Delta_1}\right)^{8(n-2)^2}\left(\frac{ad+xt}{\Delta_1^2\Delta_2^2}\right)^{2(n-1)}\notag\\
&\times\left(\frac{bc+yz}{\Delta_1^2\Delta_2^2}\right)^{2(n-1)-2}(x/\Delta_2)^{n-1}(y/\Delta_2)^{n-2}(z/\Delta_2)^{n-2}\notag\\
&\times(t/\Delta_2)^{n-1}(a/\Delta_1)^{n-1}(b/\Delta_1)^{n-2}(c/\Delta_1)^{n-2}(d/\Delta_1)^{n-1}. \end{align}
Finally, the first recurrence of the theorem follows from (\ref{star7}) and (\ref{star8}).
  \end{proof}


\section{A variant of the triangular lattice}
\label{sec6}

We consider a new lattice obtained from the triangular lattice as follows. The triangular lattice can be partitioned into equilateral triangles of side-length 3, which we call \textit{basic triangles}. Each of these basic triangles consists of $9$ unit equilateral triangles. Remove all six lattice segments forming a unit hexagon around each vertex of these basic triangles. 
An elementary region in the new lattice is either a unit equilateral triangle or a unit rhombus (see Figure \ref{Fig30}). Each basic triangle on the new lattice is covered by three unit equilateral triangles and three unit rhombi. 

\begin{figure}\centering
\includegraphics[width=0.7\textwidth]{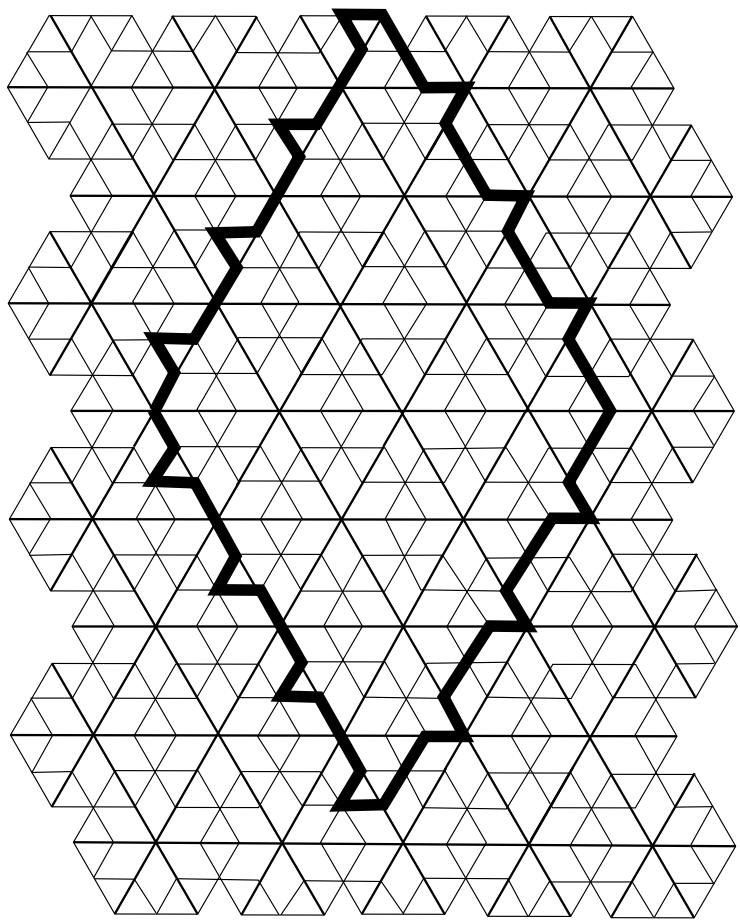}
\caption{The region $R_3$.}
\label{Fig30}
\end{figure}

Consider a vertical rhombus $R$ of side-lengths $3n+2$ whose four vertices are four lattice points and whose horizontal symmetry axis passing a vertex of some basic triangle. We consider a region that consists of all elementary regions lying completely or partly inside $R$, together with all unit triangles which have an edge resting on the boundary of $R$. Denote by $R_n$ the resulting region (see the region restricted by the bold contour in Figure \ref{Fig30}, for the case $n=3$). 
 The number of tilings of $R_n$ is given by the following theorem.

\begin{theorem}\label{tri} For any positive integer $n$
\begin{equation}\label{trieq}
\T(R_{n})=3^{n(n+1)}2^{(n+1)^2}.
\end{equation}
\end{theorem}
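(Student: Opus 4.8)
The plan is to run the paper's standard pipeline: convert the tiling problem on $R_n$ into a perfect-matching count on a weighted Aztec diamond graph, and then apply the Reduction Theorem \ref{reduction}. First I would analyze the dual graph $G$ of $R_n$ (as in Figure \ref{Fig30}). Since every basic triangle of the modified lattice carries exactly three unit triangles and three unit rhombi, $G$ has a clean periodic local structure. I would strip off the forced edges coming from the boundary unit triangles adjoined to $R$ (via the forced-edge identity; here every such weight is $1$, so this step only deletes vertices), then apply the Vertex-splitting Lemma \ref{VS} at the vertices corresponding to rhombi straddling two adjacent basic-triangle cells, turning the relevant faces into honest $4$-cycles, and then apply urban renewal (Spider Lemma \ref{spider}(a)) to each such $4$-cycle, cleaning up any doubled edges with the Edge-replacing Lemma \ref{ER}. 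The first thing to verify carefully is that after these moves $G$ becomes isomorphic to a weighted Aztec diamond $AD_m(\wt_A)$ for an appropriate order $m$ (I expect $m=n+1$), where $A$ is a small periodic weight pattern with entries drawn from $\{1,\tfrac12\}$ (two types of urban-renewal cells appear, those born from triangle--triangle adjacencies and those from triangle--rhombus adjacencies). After normalizing $A$ via the Star Lemma \ref{star} and Lemma \ref{mulstar}, each local move contributes an explicit power of $2$ (the cell-factor $xz+yt$), so counting replacement cells of each type yields an identity
\[
\T(R_n)=2^{c(n)}\,\M\!\left(AD_{m}(\wt_{A})\right),
\]
with $c(n)$ an explicit quadratic polynomial in $n$.

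Next I would evaluate $\M(AD_{m}(\wt_A))$. All cell-factors are positive, so the Reduction Theorem applies repeatedly. Computing $\de(A),\de^{(2)}(A),\dotsc$ and interleaving the column/row rescalings of Lemma \ref{mulstar}, I expect (exactly as in the proofs of Theorems \ref{weighted} and \ref{7gen}) that after a bounded number of steps the weight pattern returns to a scalar multiple of $A$. This produces a one-step recurrence $\M(AD_{k}(\wt_A))=\mu_k\,\M(AD_{k-1}(\wt_A))$ with $\mu_k$ a monomial in the weights; the powers of $3$ enter precisely through cell-factors that evaluate to $3$ or $\tfrac32$ (of the form $1\cdot 1+1\cdot\tfrac12=\tfrac32$ in the normalized pattern). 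Solving the recurrence from the base case $\M(AD_0)=1$ gives $\M(AD_m(\wt_A))$ as an explicit product of powers of $2$ and $3$.

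Substituting this into the identity above and simplifying the exponents should give $\T(R_n)=3^{n(n+1)}2^{(n+1)^2}$; note that $n(n+1)=2(1+2+\dotsb+n)$ and $(n+1)^2=\sum_{k=0}^{n}(2k+1)$, which is consistent with an $m$-step recurrence of the anticipated shape. As a final sanity check I would verify the formula directly for $n=1$ and against the $n=3$ picture in Figure \ref{Fig30}.

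The main obstacle is the first step: identifying exactly which vertices to split and which $4$-cycles to urban-renew so that the deformed graph is \emph{precisely} an Aztec diamond graph with a periodic weight pattern — the triangular-lattice geometry makes this less automatic than in the square-lattice cases of Sections 3--5 — and then counting the replacement cells of each type correctly to pin down $c(n)$. The Reduction-Theorem computation in the second step is routine but must be carried out with care to track the $\tfrac32$-type cell-factors responsible for the power of $3$; a small error in $m$ or in the boundary handling would shift the final exponents.
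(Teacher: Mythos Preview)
Your pipeline is exactly the paper's, but several of your concrete expectations are off and would derail the computation if you carried them out as stated. After the forced-edge removals, vertex splittings, urban renewals, and edge replacements, the resulting weighted Aztec diamond has order $m=2n$, not $m=n+1$; the number of cells to which the Spider Lemma is applied is $3n^2+4n+1$, so the bridge identity is
\[
\T(R_n)=2^{\,3n^2+4n+1}\,\M\!\left(AD_{2n}(\wt_A)\right).
\]
The weight pattern $A$ is $4\times 4$ but its entries are not confined to $\{1,\tfrac12\}$: the Edge-replacing Lemma produces entries $\tfrac32$, and the pattern also contains $0$'s, namely
\[
A=\begin{bmatrix}
1/2&1/2&1/2&1/2\\
1/2&1&0&1/2\\
3/2&0&1&1/2\\
3/2&3/2&1/2&1/2
\end{bmatrix}.
\]
Consequently the recurrence is not one-step: one must compute $\de(A),\de^{(2)}(A),\de^{(3)}(A)$ before finding $\de^{(4)}(A)=\tfrac{9}{16}A$, and Lemma \ref{mulstar} then gives
\[
\M(AD_{2n}(\wt_A))=2^{-8n+4}\,3^{\,4n-2}\,\M(AD_{2n-4}(\wt_A)),
\]
which solves to $\M(AD_{2n}(\wt_A))=3^{\,n(n+1)}2^{-2n(n+1)}$. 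Combining with the bridge identity yields $\T(R_n)=3^{\,n(n+1)}2^{(n+1)^2}$. So your strategy is correct, but the order of the Aztec diamond, the shape of $A$, and the period of the $\de$-recurrence all differ from what you anticipated; in particular the ``one-step recurrence $\M(AD_k)=\mu_k\M(AD_{k-1})$'' does not materialize here.
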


\begin{figure}\centering
\includegraphics[width=0.90\textwidth]{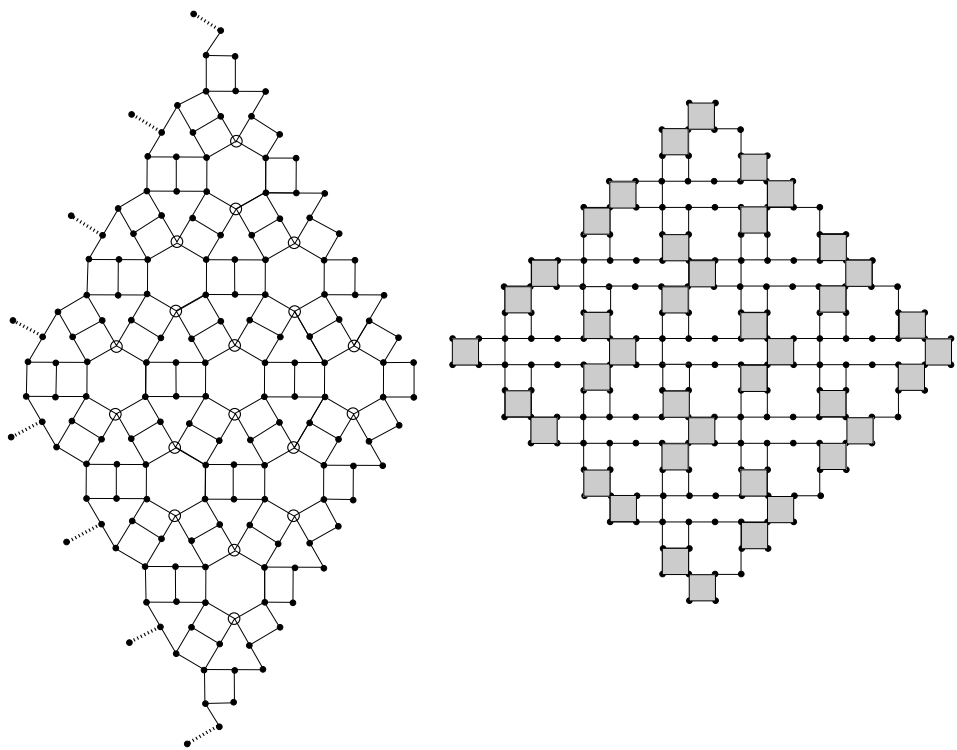}
\caption{Illustrating the transforming in the proof of Theorem \ref{tri}.}
\label{Fig31}
\end{figure}

\begin{proof}
Remove all edges incident to a vertex of degree $1$ in the dual graph of $R_n$, which are forced edges (illustrated in the left picture in Figure \ref{Fig31} by the dotted edges, for $n=3$). Then apply Vertex-splitting Lemma \ref{VS} to the top and bottom vertices of all regular hexagons in the resulting graph (illustrated by the circled vertices in left picture in Figure \ref{Fig31}).

Deform the resulting graph into a subgraph of the square lattice (see the  right picture in Figure \ref{Fig31}). Apply the vertex splitting Lemma \ref{VS} again to vertices of shaded cells which have even degree. Then apply Spider Lemma \ref{spider}(a) to all $3n^2+4n+1$ shaded cells in the resulting graph  (shown by the left picture in Figure \ref{Fig32}). Again, we remove all forced edges that are incident to a vertex of degree $1$. Finally, apply Edge-replacing Lemma \ref{ER} to all multiple edges arising from the previous steps  (see the right picture in Figure \ref{Fig32}). We get an isomorphic version of the weighted Aztec diamond $AD_{2n}(\wt_A)$ ($AD_{2n}(\wt_A)$ is obtained from the final graph by rotating $45^0$ clockwise and reflecting it about a horizontal line), where
\[
A=\begin{bmatrix}
1/2&1/2&1/2&1/2\\
1/2&1&0&1/2\\
3/2&0&1&1/2\\
3/2&3/2&1/2&1/2
\end{bmatrix}.
\]
By Lemmas \ref{VS}, \ref{ER} and \ref{spider}, we get
\begin{equation}\label{triang1}
\T(R_n)=2^{3n^2+4n+1}\M(AD_{2n}(\wt_A,)).
\end{equation}

\begin{figure}\centering
\includegraphics[width=0.90\textwidth]{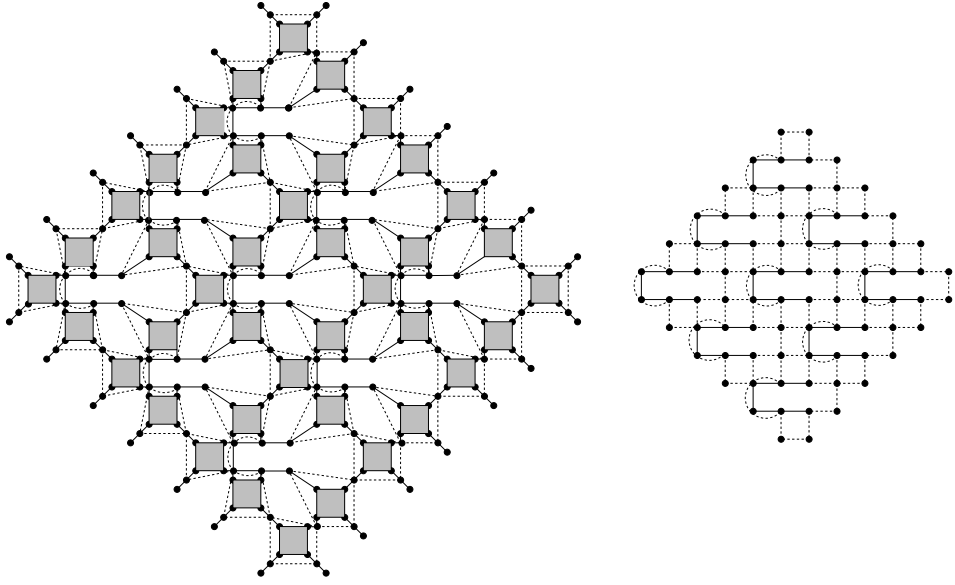}
\caption{Illustrating the transforming in the proof of Theorem \ref{tri} (cont.). The dotted edges are weighted by $1/2$, and the solid edges are weighted by $1$.}
\label{Fig32}
\end{figure}

Next, we calculate the values of $\M(AD_{2n}(\wt_A))$ by using Reduction Theorem.  It is easy to check that
\[\de(A)=\begin{bmatrix}
2/3&2&2&2/3\\
2/3&2/3&2/3&2/3\\
2/3&2/3&4/3&0\\
2/3&2&0&4/3
\end{bmatrix},
\]
\[\de^{(2)}(A)=\begin{bmatrix}
3/8&3/8&9/8&9/8\\
3/8&3/4&0&9/8\\
3/8&0&3/4&3/8\\
3/8&3/8&3/8&3/8
\end{bmatrix},\]
\[\de^{(3)}(A)=
\begin{bmatrix}
8/9&8/9&8/9&8/9\\
8/3&8/9&8/9&8/3\\
8/3&8/3&16/9&0\\
8/9&8/9&0&16/9
\end{bmatrix},\]
and
\[\de^{(4)}(A)=\frac{9}{16}\cdot A.\]
The cell-factors of cells in $AD_{2n}(\wt_A)$ are either $\frac{3}{4}$, $\frac{9}{4}$ or $\frac{1}{4}$, so by the Reduction Theorem
\begin{equation}\label{triang2}
\M(AD_{2n}(\wt_A))=\left(\frac{3}{4}\right)^{2n^2}\left(\frac{9}{4}\right)^{n^2}\left(\frac{1}{4}\right)^{n^2}\M(AD_{2n-1}(\wt_{\de(A)})).
\end{equation}
All cells in $AD_{2n-1}(\wt_{\de(A)})$ have the same cell-factor $\frac{16}{9}$, from the Reduction Theorem again, we have
\begin{equation}\label{triang3}
\M(AD_{2n}(\wt_A))=\left(\frac{16}{9}\right)^{(2n-1)^2}\M(AD_{2n-2}(\wt_{\de^{(2)}(A)})).
\end{equation}
The cells of the Aztec diamond $AD_{2n-2}(\wt_{d^{(2)}(A)})$ have cell-factors either $\frac{27}{64}$, $\frac{81}{64}$ or $\frac{9}{64}$. By the Reduction Theorem one more time, we obtain
\begin{equation}\label{triang4}
\M(AD_{2n-2}(\wt_{\de^{(2)}(A)}))=\left(\frac{27}{64}\right)^{2(n-1)^2}\left(\frac{81}{64}\right)^{(n-1)^2}\left(\frac{9}{64}\right)^{(n-1)^2}\M(AD_{2n-3}(\wt_{\de^{(3)}(A)})).
\end{equation}
Since the cell-factors of the cells in $AD_{2n-3}(\wt_{d^{(3)}(A)})$ are all $\frac{16^2}{9^2}$, the Reduction Theorem implies
\begin{equation}\label{triang5}
\M(AD_{2n-3}(\wt_{\de^{(3)}(A)}))=\left(\frac{16^2}{9^2}\right)^{(2n-3)^2}\M(AD_{2n-4}(\wt_{\de^{(4)}(A)})).
\end{equation}
From the fact $\de^{(4)}(A)=\frac{9}{16}A$,  Lemma \ref{mulstar}(a) implies
\begin{equation}\label{triang6}
\M(AD_{2n-4}(\wt_{\de^{(4)}(A)}))=\left(\frac{9}{16}\right)^{(2n-3)(2n-4)}\M(AD_{2n-4}(\wt_A)).
\end{equation}
By  (\ref{triang2})--(\ref{triang6}), we get
\begin{equation}\label{triang7}
\M(AD_{2n}(\wt_A))=2^{-8n+4}3^{4n-2}\M(AD_{2n-4}(\wt_A)).
\end{equation}
Repeated application of (\ref{triang7}) implies
\begin{equation}\label{trianglerecur}
\M(AD_{2n}(\wt_A))=3^{n(n+1)}2^{-2n(n+1)}.
\end{equation}
Finally, the equality (\ref{trieq}) is obtained from (\ref{triang1}) and (\ref{trianglerecur}).
  \end{proof}


\subsection*{Acknowledgements}
Thanks to Professor Mihai Ciucu for his encouragement and many helpful discussions.


\begin{thebibliography}{11}

\bibitem{Ciucu2}
M. Ciucu. \newblock A complementation theorem for perfect matchings of graphs having a cellular completion.
  \newblock {\em J. Combin. Theory Ser. A}, 81: 34--68, 1998.

\bibitem{Ciucu5}
M. Ciucu.  \newblock Perfect matchings and perfect powers. \newblock {\em J. Algebraic Combin.}, 17: 335--375, 2003.

\bibitem{Ciucu4}
M. Ciucu. \newblock Perfect matchings and applications.
\newblock COE Lecture Note, No. 26 (Math-for-Industry Lecture Note Series).
 Kyushu University, Faculty of Mathematics, Fukuoka, 2010.

\bibitem{Doug}
C. Douglas. \newblock An illustrative study of the enumeration of tilings:
\newblock Conjecture discovery and proof techniques, 1996. \newblock  Available at:

http://citeseerx.ist.psu.edu/viewdoc/summary?doi=10.1.1.44.8060

\bibitem{Elkies}
N.Elkies, G. Kuperberg, M.Larsen, and J. Propp. \newblock Alternating-sign matrices and domino tilings.
\newblock{\em J. Algeberaic Combin.}, 1: 111--132, 219--234, 1992.

\bibitem{Propp}
J. Propp.  \newblock Enumeration of matchings: Problems and progress.
\newblock In: {\em New Perspectives in Geometric Combinatorics}, volume 38 of {\em New Perspectives in Algebraic Combinatorics}
 pages  255--291. Cambridge University Press, 1999.

\bibitem{propp}
J. Propp. \newblock Generalized domino-shuffling. \newblock{\em Theoretical Computer Science}, 303: 267--301, 2003.

\bibitem{Yang}
B.-Y. Yang.  \newblock Two enumeration problems about Aztec diamonds.
\newblock Ph.D. thesis, Department of Mathematics, Massachusetts Institute of Technology, MA, 1991.
\end{thebibliography}
\end{document}